\numberwithin{equation}{section}
\def\CC{{\mathbb C}}
\def\HH{{\mathbb H}}
\def\Kbold{{\mathbf K}}
\def\Nbold{{\mathbf N}} 
\def\PP{{\mathbb P}}
\def\QQ{{\mathbb Q}} 
\def\RR{{\mathbb R}}
\def\ZZ{{\mathbb Z}}
\def\Ibold{\mathbf {I}}
\def\Jbold{\mathbf {J}}
\def\ssm{\smallsetminus}
\def\g{\gamma}
\def\eps{\varepsilon}
\def\tri{\triangle}
\newcommand{\p}{\partial}
\def\Bcal{{\mathcal B}}
\def\Ccal{{\mathcal C}}
\def\Hcal{{\mathcal H}} 
\def\Ical{{\mathcal I}} 
\def\Kcal{{\mathcal K}}
\def\Scal{{\mathcal S}}
\def\uHcal{{\underline\Hcal}}
\newcommand\FB{\underline{\mathcal {F\!B}}}
\newcommand \Zmod{\underline{\mathcal A b}}
\def\Cscr{{\mathscr C}}
\def\la{\langle}
\def\lla{\langle\!\langle} 
\def\ra{\rangle}
\def\rra{\rangle\!\rangle} 
\def\half{{\tfrac{1}{2}}}
\def\Sfrak{\mathfrak{S}}
\def\pt{{\scriptscriptstyle\bullet}}
\newcommand\ad{\operatorname{ad}}
\newcommand\att{\operatorname{att}}
\newcommand\aut{\operatorname{Aut}}
\newcommand\conf{\mathscr{C}\!\mathit{onf}\!}
\newcommand\cfg{\mathit{cfg}}
\newcommand\gr{\operatorname{gr}}
\newcommand\Hom{\operatorname{Hom}}
\newcommand\Htp{\operatorname{Htp}}
\newcommand\im{\operatorname{Im}}
\newcommand\Ker{\operatorname{Ker}}
\newcommand\Mod{\operatorname{Mod}}
\newcommand\re{\operatorname{Re}}
\newcommand\sign{\operatorname{sign}}
\newcommand\Tcal{\operatorname{{\mathcal T}}}
\newtheorem{theorem}{Theorem}[section]
\newtheorem{lemma}[theorem]{Lemma}
\newtheorem{proposition}[theorem]{Proposition}
\newtheorem{corollary}[theorem]{Corollary}
\theoremstyle{remark}
\newtheorem{example}[theorem]{Example}
\newtheorem{examples}[theorem]{Examples}
\newtheorem{remark}[theorem]{Remark}
\title[Configuration functor of a punctured space]{The configuration functor of a punctured space}
\author{Eduard Looijenga}
\author{Andreas Stavrou}
\email{e.j.n.looijenga@uu.nl, andreasstavrou@uchicago.edu}
\address{Mathematics Department, University of Chicago}
\begin{document}
\begin{abstract} 
Let $U$ be a space whose one point compactification $U^*$ is a CW-complex  for which the added point $*$ is the only $0$-cell. 
We observe that  the configuration space $\conf_n(U)$ of $n$ numbered distinct points in $U$ 
has no closed support homology in degree $<n$ and prove that  Borel-Moore homology group $H^{cl}_n(\conf_n(U))$ depends only on the fundamental group $\pi_1(U^*,*)$.  
We describe this homology group in terms of a presentation of $\pi_1(U^*,*)$.
 
A case of interest is when $U$ is a connected closed oriented surface  of positive genus minus a finite nonempty set. 
Then the  mapping class group $\Mod(U)$ of $U$  acts on both $\pi_1(U^*,*)$ and ${H^k}(\conf_n(U){)}\cong H^{cl}_
{2n-k}(\conf_n(U))$ and we prove that its action on the latter is
through its action on the nilpotent quotient $\pi_1(U^*,*)/ \pi_1(U^*,*)^{(k+1)}$.  Furthermore, we give an example of a mapping class of a once punctured closed surface $U$ which acts trivially on ${H^n}(\conf_n(U))$, but not on the nilpotent quotient $\pi_1(U^*,*)/ \pi_1(U^*,*)^{(n+1)}$. The former generalizes a theorem of Bianchi--Miller--Wilson and the latter  disproves a conjecture of theirs.
\end{abstract}
\maketitle
 
\section{Introduction}
Let $U$ be a space with the property that its one point compactification $U^*$ admits a  finite cell structure for which the added point $*$ is the only $0$-cell. So such a space cannot have a compact connected component. An example  to keep in mind is that of the  interior of a compact topological manifold with boundary without closed  components (so that $U^*$ is obtained by contracting the boundary); we then say that $U$ is an \emph{open manifold of finite type}.

This paper is about the configuration spaces of such $U$. We find it convenient to approach these as a functor.
To be concrete,  we consider for any   finite set $N$ of size $n\ge 0$  the space $\conf_N(U)$ of injective maps $N\to U$ (so this is a singleton when $N=\emptyset$). When  we  write $\conf_n(U)$, it is understood that $n$ takes the role of $[n]=\{1,2,\dots, n\}$. 

We regard $\conf_N(U)$ as a subset of  $U^*{}^N$. Its  complement, which we  shall denote by $D_N(U)$,  consists of the set of maps $f: N\to U^*$  which have  $*$  in their image  or fail to be injective (so  if  $N=\emptyset$, then  $D_N(U)=\emptyset$).  This subset  is obviously closed. Here is a  simple, but as we will see,  basic example.

\begin{example}\label{example:basic}
If $U=\RR$, then its $1$-point compactification is the real projective line $\PP^1(\RR)$. For any  injective map 
$f: N\to \RR$ from a finite set $N$,  the natural order of $\RR$ induces a total order on $N$. This total order, given by a bijection $\varphi: [n]\cong N$, characterizes the connected component of  $f$ in $\conf_N(\RR)$, for it will consist of the subset of  $\RR^N$ given by
$t_{\varphi(1)}<t_{\varphi(1)}<\cdots <t_{\varphi(n)}$. We shall think of this as the relative interior of the $n$-simplex $-\infty \le  s_1\le \cdots\le s_n\le \infty$ mapped to  $\PP^1(\RR)^N$ according to $\varphi$, so that  its boundary maps to $D_N(\RR)$. If $\Nbold$ stands for $N$ equipped with this total order, then we denote the $n$-simplex thus defined by $\tri^\Nbold$.
\end{example}

This example also illustrates the  following (rather easy) theorem, which will  be our point of departure.

\begin{theorem}\label{thm:connectedness}
The pair $(U^*{}^N, D_N(U))$ is $(n-1)$-connected. In fact, it has the structure of a relative finite cell  complex with cells of dimension $\ge n$ only,  such that 
the  $n$-cells of $\conf_N(U)$  are the connected components of $\conf_N$ of the union of the $1$-cells in  $U$ and, more generally,
every $(n+d)$-cell of $\conf_N(U)$ is contained in $\conf_N$ of the  $d$-skeleton of $U$.
\end{theorem}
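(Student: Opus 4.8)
The plan is to build the required relative cell structure on the pair $((U^*)^N, D_N(U))$ explicitly and to deduce the $(n-1)$-connectedness as a formal consequence of the fact that every relative cell turns out to have dimension $\ge n$. Starting from a finite CW structure on $U^*$ with $*$ its only $0$-cell, I would first equip $(U^*)^N$ with the product cell structure. Here $D_N(U)$ is the union of the ``infinity'' locus $\{f : *\in \im f\}=\bigcup_j \{f : f(j)=*\}$, which is a genuine subcomplex (in each factor $*$ is the $0$-cell), and the fat diagonal $\{f : f\text{ not injective}\}$, which is not. The essential move is therefore to refine the product structure so that the fat diagonal becomes a subcomplex. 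Since distinct open cells of $U^*$ are disjoint, two coordinates can collide only when they lie in a common open cell $c\cong\RR^{d}$, so it suffices, for each open cell $c$ and each set $S$ of coordinates sent into $c$, to replace the factor $(\mathring c)^{S}\cong(\RR^{d})^{S}$ by its Fox--Neuwirth--Fuks stratification. This stratifies $(\RR^{d})^{S}$ into cells indexed by an ordered collision/comparison datum; the strata in which some coordinates coincide lie in the fat diagonal, while those with all coordinates distinct lie in $\conf_S(\mathring c)$.

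With this refinement, the relative cells of $((U^*)^N, D_N(U))$ are exactly the strata in which every coordinate lies in an open cell of dimension $\ge 1$ and all coordinates are distinct. Such a cell is recorded by an assignment $j\mapsto c_j$ of each $j\in N$ to an open cell of $U$ of dimension $d_j:=\dim c_j\ge 1$, together with, for each cell $c$ actually used, a top (all-distinct) Fox--Neuwirth cell of $\conf_{S_c}(\mathring c)$, where $S_c=\{j : c_j=c\}$ and $k_c:=|S_c|$. The key computation is the dimension of such a cell, namely $\sum_c \dim_{\mathrm{FN}}$, where each Fox--Neuwirth cell of $\conf_{S_c}(\RR^{d})$ has dimension between $k_c+d-1$ (all points stacked along the last coordinate) and $k_c d$ (consecutive points separated in the first coordinate). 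Summing the lower bounds gives total dimension $\ge\sum_c(k_c+d_c-1)\ge\sum_c k_c=n$, since each $d_c\ge 1$. A relative cell complex whose cells all lie in dimensions $\ge n$ has relative homotopy groups $\pi_i((U^*)^N,D_N(U))=0$ for $i\le n-1$, which is the asserted connectedness.

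The two refinements of the dimension count yield the remaining assertions. The bound $\sum_c(k_c+d_c-1)=n$ is attained exactly when every $d_c=1$, i.e. when all points lie in $1$-cells; and in dimension $1$ the space $\conf_{k}(\RR)$ is a disjoint union of open simplices, one per ordering, each a single top cell whose faces lie in the diagonal or at infinity (this is precisely the model of Example~\ref{example:basic}, where the cells are the simplices $\tri^{\Nbold}$). Hence the $n$-cells are exactly the connected components of $\conf_N$ of the union of the $1$-cells of $U$. More generally, if a cell has dimension $n+d$, then $\sum_c(d_c-1)\le d$, so the largest cell of $U$ that it uses has dimension at most $d+1$; thus every $(n+d)$-cell is contained in $\conf_N$ of the $(d+1)$-skeleton of $U$, which specializes to the statement about $1$-cells in the case $d=0$.

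The main obstacle is verifying that the Fox--Neuwirth refinement genuinely assembles into a relative CW structure on $((U^*)^N, D_N(U))$, rather than merely a stratification: one must establish closure-finiteness and that the closure of each configuration stratum meets $\conf_N(U)$ only in cells of strictly smaller dimension, the rest of its boundary landing in $D_N(U)$. The delicate point is the interaction between the Fox--Neuwirth cells and the attaching maps of $U^*$, for as a coordinate approaches $\partial c$ its point leaves $\mathring c$ and is carried by the attaching map into the lower skeleton, and these degenerations must be compatible with the cell structure already present there. I would control this by an induction on the skeleta of $U^*$, attaching at each stage the configuration cells arising from the new $d$-cells and invoking cellular approximation together with the known regularity of the Fox--Neuwirth--Fuks stratification of $\conf_k(\RR^{d})$ to arrange that the relevant attaching maps are cellular. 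The closed-support (finiteness and dimension $\ge n$) conclusions then follow directly from the cell count above.
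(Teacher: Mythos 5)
Your proposal follows essentially the same route as the paper: refine the product cell structure on $(U^*)^N$ by the lexicographic (Fox--Neuwirth--Fuks) stratification of each cell power $c^K$, and read off both the $(n-1)$-connectedness and the identification of the low-dimensional cells from the dimension count $\sum_c \dim_{\mathrm{FN}} \ge \sum_c(k_c+d_c-1)=n+\sum_c(d_c-1)$. One remark: your count correctly yields that an $(n+d)$-cell lies in $\conf_N$ of the $(d+1)$-skeleton of $U$ --- which is consistent with the $d=0$ clause and with the paper's later observation that each $(n+1)$-cell involves exactly one $2$-cell --- so the ``$d$-skeleton'' in the statement appears to be an off-by-one, and your version is the one actually used in the sequel.
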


So  the first possibly nonzero homology group of  this pair is in degree $n$; we denote that homology group 
\[
\Hcal_N(U):= H_n(U^*{}^N, D_N(U)).
\]
Observe that this is also the homology with closed support $H_n^{cl}(\conf_N(U))$ and that for $N$ empty resp.\ a singleton, this is $\ZZ$ resp.\ $H_1^{cl}(U)$.  

Example \ref{example:basic} shows that $\Hcal_N(\RR)$ is the free abelian group generated by the collection $\tri^\Nbold$,  which is indexed by the  set of total orders on $N$.
 \smallskip

The assignment $N\mapsto\Hcal_N(U)$  has  a number of naturality  properties worth spelling out.  First, there is the functoriality in $N$:
a bijection of finite sets $N\cong N'$ induces an isomorphism $\Hcal_N(U)\cong \Hcal_{N'}(U)$; in particular, the permutation group $\Sfrak_N$ of $N$ acts on $\Hcal_N(U)$.   So this defines a functor from the groupoid 
$\FB$ whose objects are finite sets and whose morphisms are bijections  ($F$ for finite, $B$ for bijections) to the category of abelian groups:
 \[
\uHcal(U):  \FB\to \Zmod.
\]
This functor is monoidal:  if $N$ is the disjoint union of $N'$ and $N''$, then 
the inclusion  $\conf_{N}(U)\subset \conf_{N'}(U)\times \conf_{N''}(U)$ defines a K\"unneth  product 
\begin{equation*}
\times: \Hcal_{N'}(U)\otimes \Hcal_{N''}(U)\to \Hcal_N(U)
\end{equation*}
 that   is associative and functorial in its arguments and  is the obvious isomorphism  if $N'$ or $N''$ is empty.

 The other type of functoriality is in the spatial argument:  for $U$ and $U'$ as above, a   continuous map $f:(U^*,*)\to (U'{}^*,*)$  of pointed spaces  induces a  map of pairs $(U^*{}^N, D_N(U))\to (U'{}^*{}^N, D_N(U'))$ and hence induces a map $\Hcal_N(U)\to \Hcal_N(U')$. This defines a natural transformation 
  \[
 \uHcal (f): \uHcal(U)\Rightarrow \uHcal(U')
  \]
of monoidal functors. It is clear that  this depends only on the relative  homotopy class of $f$. In particular the group 
 $\Htp(U^*,*)$ of self-homotopy equivalences of $(U^*,*)$ acts on $\Hcal_N(U)$, an  action which commutes with that of $\Sfrak_N$. 
So if we let $\underline{\Hcal\! pt}^*$  stand for the category  whose objects are the  spaces $U$ considered here and  morphisms are pointed homotopy classes as above, then we have defined 
a   functor 
\begin{equation}\label{eqn:2functor}
\uHcal :\underline{\Hcal\! pt}^*\to  \underline{\mathbf{Mon}}(\FB, \Zmod)
\end{equation}
 to the  functor category whose objects are monoidal functors and morphisms are their monoidal natural transformations. This functor   is itself   monoidal and is so in an even stronger sense:  if $U$ is the disjoint union of open subsets $U'$ and $U''$, then 
 $\conf_N(U)$ is the disjoint union of the products  $\conf_{N'}(U')\times \conf_{N\ssm N'}(U'')$, where $N'$ runs over all the subsets of $N$ and hence we have an identification
 \begin{equation}\label{eqn:spatial-lmonoidal}
 \Hcal_N(U)\cong \oplus_{N'\subset N} \Hcal_{N'}(U')\otimes \Hcal_{N\ssm N'}(U'').
\end{equation}
by  the K\"unneth formula. 
The right hand side  comes from an obvious monoidal structure on  $\underline{\mathbf{Mon}}(\FB, \Zmod)$, so that this amounts to  a strict equivalence 
$ \uHcal(U)\sim\uHcal(U')\otimes \uHcal(U'')$. 
Both functors  $\uHcal(U)$ and $\uHcal$  become \emph{symmetric} monoidal if  we replace  $\Zmod$ by the category  \emph{graded} abelian groups (which assigns to $\Hcal_N(U)$ the degree $|N|$), as there is a Koszul rule to obey when we exchange factors.

Since the  fundamental group $\pi_1(U^*,*)$ plays  a central role in what follows, we shall  abbreviate this group by $\pi_U$. 
Theorem \ref{thm:connectedness} implies that $\Hcal_N(U)$ depends only on the $2$-skeleton of $U^*$, but the homotopy discussion leads even to:

\begin{corollary}\label{cor:fundgrpdependence}
The group $\Hcal_N(U)$ depends only on the fundamental group $\pi_U=\pi_1(U^*,*)$;  in particular, $\Htp(U^*,*)$ acts on $\Hcal_N(U)$ through its action on $\pi_U$. To be precise, the monoidal  functor 
$\uHcal$  factors through  a monoidal functor $\underline{\pi\Hcal}$ from the  category 
of  finitely presented groups to $\underline{\mathbf{Mon}}(\FB, \Zmod)$ taking  a free product $G_1*G_2$ to $\underline{\pi\Hcal}(G_1)\otimes \underline{\pi\Hcal}(G_2)$,  such that an isomorphism $G\cong \pi_U$ induces an equivalence  
$\underline{\pi\Hcal}(G)\sim\uHcal(U)$.
\end{corollary}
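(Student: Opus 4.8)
The plan is to deduce everything from the following assertion, which I will call the \textbf{Key Lemma}: \emph{if $\phi\colon(U^*,*)\to(U'^*,*)$ induces an isomorphism on $\pi_1$, then $\uHcal(\phi)\colon\uHcal(U)\Rightarrow\uHcal(U')$ is an isomorphism, i.e.\ $\Hcal_N(\phi)$ is an isomorphism for every finite $N$.} Granting it, the Corollary is formal. For a finitely presented group $G$, pick a presentation complex $K_G$ (a finite CW complex with a single $0$-cell $*$, one $1$-cell per generator and one $2$-cell per relator) and set $\underline{\pi\Hcal}(G):=\uHcal(K_G\ssm *)$; a homomorphism $G\to G'$ is realized by a cellular pointed map $K_G\to K_{G'}$ (send $1$-cells to words representing the images of the generators and extend over the $2$-cells, which is possible since relators map to relators), and the induced transformation is independent of all choices by the Key Lemma together with functoriality. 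An isomorphism $G\cong\pi_U$ is realized by a $\pi_1$-isomorphism $K_{\pi_U}\to U^*$, so the Key Lemma gives the equivalence $\underline{\pi\Hcal}(\pi_U)\sim\uHcal(U)$; taking $K_{G_1*G_2}=K_{G_1}\vee K_{G_2}$ and using the disjoint-union identity \eqref{eqn:spatial-lmonoidal} yields the free-product/tensor compatibility. The action of $\Htp(U^*,*)$ through $\pi_U$ is then immediate.

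The Key Lemma itself rests on two elementary invariances. \emph{(i) Attaching a cell of dimension $d\ge 3$ does not change $\Hcal_N$.} By Theorem \ref{thm:connectedness} a new $d$-cell of $U$ only creates cells of $\conf_N$ in degree $\ge n+d-1\ge n+2$; since $\Hcal_N(U)=H_n=\coker(\partial_{n+1}\colon C_{n+1}\to C_n)$ depends only on the cells of $\conf_N$ in degrees $n,n+1$, it is unaffected. \emph{(ii) Attaching a $2$-cell along a based null-homotopic loop does not change $\Hcal_N$.} Such a space $X^+=X\cup_\alpha e^2$ is pointed-homotopy-equivalent to $X\vee S^2$, so by homotopy invariance $\Hcal_N(X^+\ssm *)\cong\Hcal_N\big((X\vee S^2)\ssm *\big)$; the punctured wedge is $(X\ssm *)\sqcup\RR^2$, and \eqref{eqn:spatial-lmonoidal} gives
\[
\Hcal_N\big((X\vee S^2)\ssm *\big)\;\cong\;\bigoplus_{N'\subseteq N}\Hcal_{N'}(X\ssm *)\otimes\Hcal_{N\ssm N'}(\RR^2).
\]
Here $\Hcal_M(\RR^2)=0$ for $M\neq\emptyset$: computing with the cell structure $S^2=e^0\cup e^2$, the space $\RR^2$ has no $1$-cells, so by Theorem \ref{thm:connectedness} there are no degree-$m$ cells and $H_m=0$. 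Only the term $N'=N$ survives, and the inclusion induces the isomorphism $\Hcal_N(X\ssm *)\xrightarrow{\ \sim\ }\Hcal_N(X^+\ssm *)$.

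To prove the Key Lemma I would pass to the mapping cylinder $M_\phi$. The inclusion $U'^*\hookrightarrow M_\phi$ is a pointed deformation retraction, so $\uHcal$ of it is an isomorphism; it therefore suffices to show the inclusion $j\colon U^*\hookrightarrow M_\phi$ induces an isomorphism, since $\phi=r\circ j$ for the retraction $r$. Because $\phi$ is a $\pi_1$-isomorphism, $\pi_i(M_\phi,U^*)=0$ for $i\le 1$, so by relative Hurewicz and standard CW approximation we may give $(M_\phi,U^*)$ a relative cell structure with cells in dimensions $\ge 2$ only. Attaching the relative $2$-cells one at a time, each imposes a relation on $\pi_1$; as the total composite of these surjections is the isomorphism $\pi_1(U^*)\xrightarrow{\sim}\pi_1(M_\phi)$, every intermediate surjection is an isomorphism, so each $2$-cell is glued along a loop that is null-homotopic in the current complex — invariance (ii) applies at each step. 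The remaining cells have dimension $\ge 3$, handled by (i). Thus $\uHcal(j)$, and hence $\uHcal(\phi)$, is an isomorphism.

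The hard part is invariance (ii), the case of a ``redundant'' $2$-cell whose attaching loop is a consequence of the existing relations: such an attachment genuinely changes the homotopy type and the second homology, and one might fear it perturbs $\coker(\partial_{n+1})$. The decisive point is that, up to pointed homotopy, it only wedges on an $S^2$, whose punctured model $\RR^2$ has vanishing $\Hcal$; everything else is bookkeeping. The second technical point is the reduction of an arbitrary $\pi_1$-isomorphism to the moves (i)–(ii), for which the minimal relative cell structure on the mapping cylinder, combined with the observation that a composite of $\pi_1$-surjections equal to an isomorphism consists of isomorphisms, is exactly what is needed.
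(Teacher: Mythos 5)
Your reduction of the isomorphism statement is correct and takes a genuinely different route from the paper in two places. The paper reduces to the $2$-skeleton and then invokes the fact that finite $2$-complexes with isomorphic fundamental groups become homotopy equivalent after wedging on $2$-spheres, whereas you rederive that reduction from scratch via the mapping cylinder, cell trading, and the observation that a composite of $\pi_1$-surjections equal to an isomorphism consists of isomorphisms; and where the paper proves $\Hcal_I(\CC)=0$ by Poincar\'e--Lefschetz duality plus the known homotopy dimension of $\conf_I(\CC)$, you get $\Hcal_M(\RR^2)=0$ directly from the lexicographic cell structure (no $1$-cells, hence no cells of $\conf_M$ in degree $|M|$). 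Both variants work, and yours is more self-contained; the only things to tidy are that you must use the reduced mapping cylinder so the ambient complex still has a single $0$-cell, and that the relative cell structure on $(M_\phi,U^*)$ should be arranged to be finite.

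The genuine gap is in the step you dismiss as formal: ``the induced transformation is independent of all choices by the Key Lemma together with functoriality,'' and consequently also the clause ``$\Htp(U^*,*)$ acts on $\Hcal_N(U)$ through its action on $\pi_U$.'' The Key Lemma says that a $\pi_1$-isomorphism induces an isomorphism on $\Hcal_N$; it does \emph{not} say that two maps $(U^*,*)\to (U'{}^*,*)$ inducing the same homomorphism on $\pi_1$ induce the same map on $\Hcal_N$. That is not automatic: two cellular extensions $K_G\to K_{G'}$ of the same map on $1$-skeleta differ on the $2$-cells by elements of $\pi_2(K_{G'})$ and need not be homotopic, and for a self-homotopy-equivalence inducing the identity on $\pi_U$ your argument only yields that it induces \emph{some} automorphism of $\Hcal_N(U)$, not the identity. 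So as written, $\underline{\pi\Hcal}$ is not shown to be a well-defined functor and the ``in particular'' clause is not established.

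What closes the gap is Theorem \ref{thm:generation}, equivalently the surjectivity of $\Hcal_N(U_1)\to\Hcal_N(U)$ from the $1$-skeleton, which your proposal never invokes: since $\Hcal_N(U)$ is generated by classes of the form $\tri_U^{(\Nbold_1,\dots,\Nbold_r)}\bigl((\g_{1}-1)\otimes\cdots\otimes(\g_{r}-1)\bigr)$ with $\g_i\in\pi_U$, and a map $f$ sends such a class to the one built from the $f_*\g_i$, the induced map on $\Hcal_N$ is determined by $\pi_1(f)$. Concretely: if $\pi_1(f)=\pi_1(g)$, then $f$ and $g$ are pointed homotopic on the $1$-skeleton, hence $\Hcal_N(f)$ and $\Hcal_N(g)$ agree after precomposition with the surjection $\Hcal_N(U_1)\to\Hcal_N(U)$, hence agree. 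Adding this one ingredient repairs both the well-definedness of $\underline{\pi\Hcal}$ and the factorization of the $\Htp(U^*,*)$-action through $\pi_U$.
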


If $U$ happens to be an oriented  open  $d$-manifold of finite type, then a total order on $N$ gives an orientation of $U^N$
and  then  
\[
\Hcal_N(U)\cong H^{(d-1)n}(\conf_N(U))\otimes (\sign_N)^{\otimes d}
\]
by Poincar\'e--Lefschetz duality, where  $\sign_N: \Sfrak_N\to \{\pm\}$ is the sign character. In this  setting, it is natural to restrict the $\Htp(U^*,*)$-action to that of the topological mapping class group $\Mod(U)$, i.e., the connected component group  of the group of orientation preserving self-homeomorphisms of $U$. Corollary \ref{cor:fundgrpdependence} above  shows that this action is through its action on $\pi_U$.
When $d=2$ and $U$ is connected (which in the end is the case that interests us most), then  $U$ is of the form $X\ssm \{p_0, \dots , p_l\}$, where $X$ is a closed oriented surface and the $p_i\in X$ are distinct. So $U^*$ has then the  homotopy type of a wedge of $X$ and $l$ circles. 
Since the  latter is the $1$-point compactification of the disjoint union of $X\ssm \{p_0\}$ and $l$ copies of $\RR$, it follows that 
\[
\uHcal(U)\cong \uHcal(X\ssm \{p_0\})\otimes \uHcal(\RR)^{\otimes l}.
\]
This gives the case $k=0$ of the following more general theorem.
\begin{theorem}\label{thm:manypuncturesalldegrees}
    For $0\le k\le n$ and a set $N$ of size $n$, the abelian group $H^{n-k}(\conf_N(U))$ is isomorphic to 
    $$
    \oplus_{(N_0, N_1,\dots, N_l )} H^{n_0-k}(\conf_{N_0}(X\ssm\{p_0\}))\otimes H^{n_1}(\conf_{N_1}(\RR))\otimes \dots H^{n_{l}}(\conf_{N_{l}}(\RR)),
    $$
    where the sum is over all partitions $N=N_0\sqcup N_1\sqcup \cdots\sqcup N_l $ of $N$ into numbered subsets and  $n_i=|N_i|$. For $k<0$, both $H^{n-k}(\conf_N(U))$ and the direct sum vanish.
\end{theorem}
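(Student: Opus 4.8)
The plan is to recast the statement in closed-support homology, where it becomes a clean consequence of a space-level wedge decomposition together with a homotopy-invariance principle, and then to dualize back. By Poincar\'e--Lefschetz duality for the oriented open $2n$-manifold $\conf_N(U)$ one has $H^{n-k}(\conf_N(U))\cong H^{cl}_{n+k}(\conf_N(U))=H_{n+k}((U^*)^N,D_N(U))$, and likewise for the punctured-surface factors $X\ssm\{p_0\}$ (of dimension $2n_0$); each line factor $\conf_{N_i}(\RR)$ contributes only its top closed-support group $\Hcal_{N_i}(\RR)=H^{cl}_{n_i}(\conf_{N_i}(\RR))$, which is what the degree-$n_i$ term records under duality. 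Thus it suffices to produce, for every $j$, a natural isomorphism
\[
H^{cl}_{j}(\conf_N(U))\cong \bigoplus_{(N_0,\dots,N_l)}\ \bigoplus_{a_0+\cdots+a_l=j} H^{cl}_{a_0}(\conf_{N_0}(X\ssm\{p_0\}))\otimes H^{cl}_{a_1}(\conf_{N_1}(\RR))\otimes\cdots\otimes H^{cl}_{a_l}(\conf_{N_l}(\RR)),
\]
and to note that $H^{cl}_{a_i}(\conf_{N_i}(\RR))$ vanishes unless $a_i=n_i$, since $\conf_{N_i}(\RR)$ is a disjoint union of open cells $\cong\RR^{n_i}$; this forces $a_i=n_i$ for $i\ge 1$ and $a_0=n_0+k$, recovering the stated grading. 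The vanishing for $k<0$ is immediate from Theorem \ref{thm:connectedness}, which gives $H^{cl}_{j}=0$ for $j<n$, applied to $U$ and to each surface factor.

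The engine is the functor $\Gamma_N\colon (Y,*)\mapsto Y^N/D_N(Y)$ from pointed spaces to pointed spaces, whose reduced homology is $\tilde H_*(\Gamma_N(Y))=H_*(Y^N,D_N(Y))$; for $Y=U^*$ this is exactly $H^{cl}_*(\conf_N(U))$. First I would record the space-level strong monoidality: for a one-point union there is a natural pointed homeomorphism $\Gamma_N(A\vee B)\cong\bigvee_{N'\subseteq N}\Gamma_{N'}(A)\wedge\Gamma_{N\ssm N'}(B)$, because after collapsing the basepoint-hitting locus every surviving tuple has its $A$- and $B$-coordinates separated, a coincidence between an $A$- and a $B$-coordinate being possible only at the (already collapsed) basepoint. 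Iterating on the \emph{literal} wedge $V^*=(X\ssm\{p_0\})^*\vee(\RR^*)^{\vee l}$, which is the one-point compactification of the genuine disjoint union $V=(X\ssm\{p_0\})\sqcup\RR^{\sqcup l}$, and applying $\tilde H_*$ (the smash factors combine by K\"unneth, with no Tor terms since every $\RR$-factor is free abelian) yields the displayed decomposition for $V$ in all degrees.

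It remains to replace $V$ by $U$, and this is where genuinely new input beyond Corollary \ref{cor:fundgrpdependence} (which controls only the top group) is required. One uses that $U^*$ and $V^*$ are pointed homotopy equivalent---both collapse the $l+1$ punctures of $X$ to a single point and so model $X\vee(\RR^*)^{\vee l}$---together with homotopy invariance of $\Gamma_N$ on pointed CW pairs. This last point is the main obstacle, and it is not formal: although $Y^N$ and the basepoint-hitting (fat-wedge) locus are subcomplexes for the product cell structure, the fat diagonal is \emph{not} cellular, so one cannot simply invoke cellular approximation. The plan is to exhibit $D_N(Y)$ as the colimit, over the poset of coincidence-and-basepoint patterns, of a diagram whose values are products of copies of $Y$ (a pattern that merges some coordinates and sends others to $*$ contributes a factor $Y^{m}$), to check that each structure map is a diagonal or basepoint inclusion---hence a cofibration for a CW complex with $0$-cell basepoint---and thereby to identify this colimit with the homotopy colimit. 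Such a homotopy colimit of products of $Y$ is carried by a pointed homotopy equivalence $Y\to Y'$ to a homology isomorphism $D_N(Y)\to D_N(Y')$; comparing the long exact sequences of the pairs $((U^*)^N,D_N(U))$ and $((V^*)^N,D_N(V))$ via the five lemma then gives $\tilde H_*(\Gamma_N(U^*))\cong\tilde H_*(\Gamma_N(V^*))$, which combined with the previous paragraph and duality completes the proof. I expect the verification of the homotopy-colimit presentation of $D_N(Y)$---in particular that intersections of diagonal and basepoint strata are again of this form and that the cofibration hypotheses hold uniformly in $N$---to be the technical heart of the argument.
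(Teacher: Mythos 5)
Your argument is correct, but it takes a genuinely different route from the paper's. The paper works at the chain level: it uses the bar-type cellular complex $\underline{\Cscr}(U)_\pt$ of Section \ref{subsec:attaching2cell}, built from attaching the single $2$-cell along $\zeta$, observes that $\zeta$ involves only the $\alpha$-generators and none of the arcs $\beta_j$, and concludes that the complex splits as an $\FB$-tensor product $(\underline{\Cscr}(X\ssm\{p_0\})_\pt,\partial)\otimes(\uHcal^{\otimes l},0)$; K\"unneth and Poincar\'e--Lefschetz duality then finish. That chain-level splitting is also what makes the subsequent equivariance statement (the $\Mod(X\ssm D)$-action in Section \ref{sec:manypunctures}) transparent. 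Your route --- one-point compactifications, the wedge/smash decomposition of $\Gamma_N$, and pointed homotopy invariance --- is a clean space-level alternative, and it yields the decomposition for any $U$ whose compactification is pointed homotopy equivalent to such a wedge. The one substantive criticism is that you have misplaced the difficulty: the homotopy invariance you call ``the technical heart'' is formal, and the homotopy-colimit analysis of $D_N(Y)$ is unnecessary. A pointed map $\phi\colon(Y,*)\to(Y',*)$ induces a map of pairs $(Y^N,D_N(Y))\to(Y'^N,D_N(Y'))$, since a tuple that hits the basepoint or fails to be injective keeps that property after applying $\phi$, and a pointed homotopy $\Phi_t$ induces a homotopy through such maps of pairs; hence a pointed homotopy equivalence induces an isomorphism on $H_*(Y^N,D_N(Y))$ in \emph{all} degrees --- this is exactly the functoriality recorded in the introduction before \eqref{eqn:2functor}, which is in no way special to degree $n$. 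You therefore need neither the claim that $D_N(Y)\to D_N(Y')$ is a homology isomorphism nor the five lemma; what you do need to note is that the pairs are good enough that $\tilde H_*(\Gamma_N(Y))=H_*(Y^N,D_N(Y))$, which the relative cell structure of Theorem \ref{thm:connectedness} supplies. Finally, your reading of the tensor factors $H^{n_i}(\conf_{N_i}(\RR))$ as $\Hcal_{N_i}(\RR)=H^{cl}_{n_i}(\conf_{N_i}(\RR))$, free on the total orders of $N_i$, is the intended one and is what makes the degree bookkeeping come out right.
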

In Section \ref{sec:manypunctures} we discuss to what extent this isomorphism is natural.
\smallskip

Here is a direct way to produce  elements of $\Hcal_N(U)$  from the fundamental group $\pi_U$.
Represent any $\g\in \pi_U$  by a map $\PP^1(\RR)\to U^*$ which takes $\infty$ to $*$ so that is defined 
$\Hcal_N(\g):  \Hcal_N(\RR)\to \Hcal_N(U)$.  Every total order on $N$ defines a generator $\tri^\Nbold$ of $\Hcal_N(\RR)$ and therefore gives  us an element $\Hcal_N(\g)(\tri^\Nbold)\in  \Hcal_N(U)$. We shall denote  the latter by $\tri^\Nbold(\g)$. It is clear that this extends to a linear map from the group ring $\ZZ\pi_U$ of $\pi_U$ to $\Hcal_N(U)$. Let us write  $\Lambda_{U}$ for  $\ZZ\pi_U$ and denote the kernel of the augmentation 
$\ZZ\pi_U\to \ZZ$ (which is constant $1$ on the basis of $\pi_U$) by $\Ical_U$. A theorem that goes back to Beilinson (see  e.g. \cite{looij:BDG}) implies  that  $\triangle^\Nbold(\g)$ depends only on the image of $\g$ in the truncated group ring $\Lambda_{U}|_n:=\Lambda_{U}/\Ical_{U}^{n+1}$, so that this construction factors through a linear map
\begin{gather*}
\triangle_U^\Nbold: \Lambda_{U}|_n\to  \Hcal_N(U).
\end{gather*}
This map is clearly $\Htp(U^*,*)$-equivariant. The obvious map $\pi_U\to  \Lambda_{U}|_n$ takes values in the group of units of $\Lambda_{U}|_n$ and is then a group homomorphism. Since the action of $\pi_U$ on  $\Lambda_{U}|_n$ by left multiplication is unipotent of degree $n$ (it acts trivially on each successive quotient
$\Ical_U^k/\Ical_U^{k+1}$),  the kernel  of this map contains the $(n+1)$st term 
$\pi_U^{(n+1)}$ of the central descending series, so that it factors through $\pi_U|_n:=\pi_U/\pi_U^{(n+1)}$.  
Let us also observe that since $1\in \Lambda_U$ represents the constant loop,  $\triangle_U^\Nbold$ takes on $1$ the value  $0$ when  $n>0$. So then  the image of $\triangle_U^\Nbold$ does not change if we restrict it  to $\Ical_{U}|_n:=\Ical_U/\Ical_U^{n+1}$. 

With the help of  the monoidal structure we now get many more elements: for any  partition  $N=N_1\sqcup \cdots\sqcup N_r$ into nonempty subsets (so the size $n_i$ of $N_i$ is positive) and a choice of a total order on each $N_i$ (indicated by the use of the boldface font $\Nbold_i$),  we get a linear map 
\begin{multline}\label{eqn:tensormap1}
\tri_U^{(\Nbold_1, \dots,\Nbold_r)}: \Ical_{U}|_{n_1}\otimes\Ical_{U}|_{n_2}\otimes\cdots\otimes \Ical_{U}|_{n_r}
\xrightarrow{\triangle_U^{\Nbold_1}\otimes\cdots\otimes \triangle_U^{\Nbold_r}}\\
\to \Hcal_{N_1}(U)\otimes\Hcal_{N_2}(U)\otimes\cdots \otimes\Hcal_{N_r}(U)\to \Hcal_N(U).
\end{multline}

\begin{theorem}\label{thm:generation}
Assume $n>0$. Then the images of the  maps \eqref{eqn:tensormap1} generate $\Hcal_N(U)$. We  already get a set of generators by evaluating these maps on  tensors of the form $(\g_{c_1}-1)|_{n_1}\otimes\cdots\otimes (\g_{c_r}-1)|_{n_r}$, where each $c_i$ is a $1$-cell of $U^*$ and $\g_{c_i}\in \pi_U$ is defined by an orientation of $c_i$.

In particular,  the group $\Htp(U^*,*)$ of self-homotopy equivalences acts on $\Hcal_N(U)$  through its action on $\Ical_{U}|_n$.
\end{theorem}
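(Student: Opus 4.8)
The plan is to read $\Hcal_N(U)$ off the relative cell structure supplied by Theorem \ref{thm:connectedness} and then recognize the resulting $n$-cells as the images of the special tensors. Since that cell structure has no relative cells in dimensions below $n$, the relative cellular chain complex satisfies $C_k=0$ for $k<n$; hence $\Hcal_N(U)=H_n(U^*{}^N,D_N(U))=C_n/\partial C_{n+1}$, so $\Hcal_N(U)$ is generated by the classes of the $n$-cells. It therefore suffices to exhibit each $n$-cell as an image of a map \eqref{eqn:tensormap1} evaluated on a tensor of the stated form.

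First I would make the $n$-cells explicit. Writing $U^{(1)}$ for the union of the (open) $1$-cells of $U$, i.e. the disjoint union $\bigsqcup_c c^\circ$ of the arcs $c^\circ:=c\ssm\{*\}\cong\RR$ indexed by the $1$-cells $c$ of $U^*$, Theorem \ref{thm:connectedness} identifies the $n$-cells with the connected components of $\conf_N(U^{(1)})$. Applying the analysis of Example \ref{example:basic} to each arc, such a component is specified by a coloring $\mu\colon N\to\{\text{$1$-cells}\}$ together with a total order on each nonempty fibre. If $c_1,\dots,c_r$ are the distinct $1$-cells in the image of $\mu$, with $N_i:=\mu^{-1}(c_i)$ and $\Nbold_i$ the chosen order, then this component is the relative interior of the product $\tri^{\Nbold_1}\times\cdots\times\tri^{\Nbold_r}$, placed in $U^*{}^N$ via the characteristic maps of the $c_i$.

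The key step is to match such a cell with a tensor image. Fix one of the cells $c_i$ and orient it so as to define $\g_{c_i}\in\pi_U$; its characteristic map is exactly a map $\PP^1(\RR)\to U^*$ carrying $\infty$ to $*$ that represents $\g_{c_i}$, so $\tri_U^{\Nbold_i}(\g_{c_i})=\Hcal_{N_i}(\g_{c_i})(\tri^{\Nbold_i})$ is precisely the class of the cell with $N_i$ laid on $c_i$ in the order $\Nbold_i$, and since $n_i>0$ this equals $\tri_U^{\Nbold_i}(\g_{c_i}-1)$. The K\"unneth product in \eqref{eqn:tensormap1} then sends $(\g_{c_1}-1)|_{n_1}\otimes\cdots\otimes(\g_{c_r}-1)|_{n_r}$ to the class of the product cell $\tri^{\Nbold_1}\times\cdots\times\tri^{\Nbold_r}$, i.e. to the very $n$-cell attached to $(\mu,\Nbold_\bullet)$. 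Thus every $n$-cell is of the advertised form and the generation statement follows. I expect the one place demanding genuine care to be this identification: checking that the characteristic map of $c_i$ is an admissible representative of $\g_{c_i}$, that the K\"unneth product of the simplex classes is the product cell with the correct orientation, and that the signs introduced by reordering factors are absorbed by the bookkeeping already fixed in Example \ref{example:basic}.

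Finally, for the ``in particular'' clause I would argue as follows. Each $\tri_U^\Nbold$ is $\Htp(U^*,*)$-equivariant and factors through $\Lambda_U|_{|N|}$, so a self-homotopy equivalence $\phi$ sends a generator $\tri_U^{(\Nbold_1,\dots,\Nbold_r)}\big(\bigotimes_i(\g_{c_i}-1)|_{n_i}\big)$ to $\tri_U^{(\Nbold_1,\dots,\Nbold_r)}\big(\bigotimes_i\phi_*(\g_{c_i}-1)|_{n_i}\big)$, where $\phi_*$ is the induced action on $\Ical_U|_{n_i}$ coming from Corollary \ref{cor:fundgrpdependence}. Since each $n_i\le n$, the quotient $\Ical_U|_{n_i}$ is a further quotient of $\Ical_U|_n$; hence if $\phi$ acts trivially on $\Ical_U|_n$ it acts trivially on every input $(\g_{c_i}-1)|_{n_i}$, fixes all the generators, and therefore acts trivially on $\Hcal_N(U)$. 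This is exactly the assertion that the $\Htp(U^*,*)$-action factors through its action on $\Ical_U|_n$.
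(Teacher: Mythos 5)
Your proposal is correct and follows essentially the same route as the paper: both read the generators off the relative cell structure of Theorem \ref{thm:connectedness} (which forces $\Hcal_N(U)=C_n/\partial C_{n+1}$), identify each $n$-cell with a choice of distinct $1$-cells $c_1,\dots,c_r$, a partition $N=N_1\sqcup\cdots\sqcup N_r$ into nonempty parts and a total order on each part, and recognize the class of that cell as the image of $(\g_{c_1}-1)\otimes\cdots\otimes(\g_{c_r}-1)$ under $\tri_U^{(\Nbold_1,\dots,\Nbold_r)}$. Your added remarks on $\tri_U^{\Nbold_i}(1)=0$ for $n_i>0$ and on the factorization of the $\Htp(U^*,*)$-action through $\Ical_U|_n$ are correct and only make explicit what the paper leaves implicit.
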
 

This gives rise to a descending filtration $F^\pt\Hcal_N(U)$ of $\Hcal_N(U)$ by taking for  $F^s\Hcal_N(U)$ the  subgroup of
$\Hcal_N(U)$ spanned by the images of the maps   $\Hcal_{N_1}(U)\otimes\Hcal_{N_2}(U)\otimes\cdots \otimes\Hcal_{N_r}(U)\to \Hcal_N(U)$ for which $N_i\not=\emptyset$ and $r\ge s$, so that  
\[
\Hcal_N(U)=F^1\Hcal_N(U)\supset \cdots  \supset F^{n}\Hcal_N(U)\supset F^{n+1}\Hcal_N(U)=0.
\]
So  $F^{n} \Hcal_N(U)$ appears as a quotient of $H^{cl}_1(U)^{\otimes N}$. It follows from Theorem \ref{thm:generation} that $F^k\Hcal_N(U)/F^{k+l}(\Hcal_NU)$  is a quotient of a direct sum of a finite number of copies of  $\Ical_{U}|_{l+1}$.

With the help of the $\tri$-maps, we can also  describe $\underline{\pi\Hcal}(G)$ in terms of  a finite presentation of $G$ (Corollary \ref{cor:kernelmultiplecells-groups}). In the case of only one relation, or equivalently one $2$-cell, we will find a categorical bar complex which allows us to address all homological degrees.

\begin{theorem}\label{thm:generation1}
If $U^*$  is a $2$-complex with a single $2$-cell, then $\Htp(U^*,*)$ acts on $H^{cl}_{n+k}(\conf_N(U))$
through its action on $\Lambda_U|_{n-k}$
\end{theorem}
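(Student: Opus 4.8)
The plan is to realise $H^{cl}_\bullet(\conf_N(U))$ as the homology of an explicit complex built from the single-$2$-cell structure, and then to read the $\Htp(U^*,*)$-action off a system of $\triangle$-type generators. Fix a cell structure on $U^*$ with the one $0$-cell $*$, the $1$-cells $c_1,\dots,c_g$ (so the $1$-skeleton $U_1$ of $U$ is $\bigsqcup_g\RR$ and $\uHcal(U_1)\cong\uHcal(\RR)^{\otimes g}$), and the unique $2$-cell $e\cong\RR^2$, attached along a word $w$ in the loops $\g_{c_i}\in\pi_U$. Subdividing the relative complex of Theorem \ref{thm:connectedness} in Fox--Neuwirth fashion — linearly along each arc and by first-then-second coordinate inside $e$ — a cell contributing to degree $n+k$ places $n-p$ of the points on the $1$-cells (each on a $1$-dimensional cell, hence linearly ordered along its arc) and the remaining $p\ge k$ points inside $e$ so that they occupy exactly $k$ distinct first coordinates. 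The first step is to assemble these cells into a \emph{categorical bar complex}: the free part (points on $U_1$) contributes tensor data of $\uHcal(U_1)$, while the successive incidences of the boundary of $e$ with the $1$-skeleton, governed by $w$, supply the bar differentials.

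The second step is to identify the chain data in terms of $\pi_U$ and to check naturality. Through the loops $\g_{c_i}$ and the maps $\triangle_U^\Nbold$ of Theorem \ref{thm:generation}, the points on the arcs carry classes in tensor products of the $\Ical_U|_m$, whereas the $p$ points inside $e$ contribute only the rank-one closed-support homology of a configuration space of $\RR^2$, on which $\Htp(U^*,*)$ acts trivially up to the Koszul sign, $e$ being contractible. The bar differentials are multiplication by the class of the relator $w$ (a Fox-derivative expression in $\Lambda_U$), arising when a point of $e$ is pushed onto the attaching circle and thereby threaded through the $1$-skeleton. By Corollary \ref{cor:fundgrpdependence} and the naturality of the $\triangle$-maps, this entire complex together with its identifications is natural in $(U^*,*)$, so $\Htp(U^*,*)$ acts on it through its induced action on $\Lambda_U$.

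The third step is the degree count and the transfer to homology. A generating cell in degree $n+k$ has at most $n-k$ of its points on the arcs (indeed $n-p\le n-k$), so the attached group-ring data lies in a tensor product $\Ical_U|_{m_1}\otimes\cdots\otimes\Ical_U|_{m_r}$ with $\sum_i m_i=n-p\le n-k$, hence inside $\Lambda_U|_{n-k}$. Running the generation argument of Theorem \ref{thm:generation} through the bar complex degree by degree yields an $\Htp(U^*,*)$-equivariant surjection onto $H^{cl}_{n+k}(\conf_N(U))$ from a finite sum of such tensor products. Since a self-equivalence acting trivially on $\Lambda_U|_{n-k}$ then acts trivially on the source, it acts trivially on the target; that is, $\Htp(U^*,*)$ acts on $H^{cl}_{n+k}(\conf_N(U))$ through $\Lambda_U|_{n-k}$, as claimed.

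The main obstacle I expect is the passage from chains to homology through the relator differentials. Because $\Htp(U^*,*)$ does not preserve the cell structure, the bound $\Lambda_U|_{n-k}$ must be produced via the equivariant generating surjection rather than on individual chain groups, so the crux is to prove that the categorical bar complex genuinely computes $H^{cl}_{n+k}(\conf_N(U))$ in every degree and that its $w$-differentials, built from the lower Fox--Neuwirth strata of $e$ where several points share a first coordinate, never reintroduce group-ring data of degree exceeding $n-k$. Establishing that these strata assemble into a resolution compatible with the degree filtration — so that the generation in higher homological degree holds and the degree bound survives in homology — is the technical heart of the argument.
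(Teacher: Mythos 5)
Your proposal follows essentially the same route as the paper: the lexicographic (Fox--Neuwirth) cell structure turns the relative cellular chain complex of $(U^{*N},D_N(U))$ into a bar complex in which a degree-$(n+k)$ cell has exactly $k$ ``levels'' in the $2$-cell and hence at most $n-k$ points on the $1$-skeleton, the $2$-cell factors carry a trivial $\Htp(U^*,*)$-action, and the surviving group-ring data therefore factors through $\Lambda_U|_{n-k}$ --- this is precisely Proposition \ref{prop:barcomplex} together with Corollary \ref{cor:zeta-generation}. The ``technical heart'' you flag at the end is not actually an obstacle: the bar complex is literally the cellular chain complex from Theorem \ref{thm:connectedness}, so it computes $H^{cl}_\bullet(\conf_N(U))$ by construction, and one only needs the equivariant surjection onto homology from the degree-$(n+k)$ chain group, whose generators already satisfy the bound.
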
  
 
In this case $\pi_U$ is a one relator group. If furthermore the relation is \emph{primitive} (i.e. not a proper power of another element of the free group), then a theorem of Labute \cite{labute} tells us that $(\Ical_U^{n+1}+1)\cap \pi_U=\pi_U^{(n+1)}$, so that  the kernel of the $\Htp(U, *)$-action  on the quotient $\pi_U|_n=\pi_U{/\pi_U^{(n+1)}}$ is the same as that of its action  on $\Lambda_U|_n$ (or $\Ical_U|_n$ for that matter).
\smallskip

This covers the case when $U$ is a connected oriented open surface  of  finite type. 
So then the kernel of the $\Htp(U, *)$-action on $\pi_U|_n$ is the  same as that of its its action  on $\Lambda_U|_n$ (or $\Ical_U|_n$ for that matter).  The mapping class group $\Mod(U)$ acts on $H^k(\conf_N(U))\cong H^{cl}_{2n-k}(\conf_N(U))$ through $\Htp(U^*,*)$ and  Theorem \ref{thm:generation1} implies that it then  does so via its action on $\pi_U|_{k}$. So the kernel of the latter action, which is known as the \emph{$k$th Johnson group $J^k\Mod(U)$}, acts trivially on  $H^k(\conf_N(U))$.
We derive from this  a similar  result for a closed surface (with a better range for the cohomology in top degree $n+1$, see Corollary \ref{cor:closedsurface}).

When $U$ is a once punctured surface (so that $U^*$ is a closed surface), this is due to Bianchi--Miller--Wilson \cite{bmw}. They also conjectured that for $k$ as above, the group $J^k\Mod(U)$ is exactly the kernel of this action. We  find however that
this is not so: we show that the common kernel $\Ical^\cfg_n(U)\subset\Ical_U|_n$ of the  maps 
$\triangle_U^\Nbold: \Ical_{U}|_n\to  \Hcal_N(U)$ (which is of course $\Mod(U)$-invariant) is nonzero when the genus of $U$ is $\ge 2$ and $n\ge 4$
and we  construct elements of $\Mod(U)$ that act nontrivially on $\Ical_U|_n$ but trivially on $\Ical_U|_n/\Ical^\cfg_n(U)$.
In order to determine the exact kernel, we must determine  $\Ical^\cfg_n(U)$. This will be the subject of a subsequent paper by the second author.

\subsection*{Related work} The first instance  we are aware of concerns the case when $U^*$ is  a compact oriented compact  
 surface of genus $g>0$ with connected  boundary  and $*$ is a boundary point. In this case, we can arrange that the $1$-skeleton $U_1^*$ is a wedge if $2g$ circles and the inclusion $(U_1^*,*)\subset (U^*,*)$  is then a homotopy equivalence. Moriyama \cite{moriyama}  proved that then the  reduced homology of  $\conf_n(U)$  is concentrated in the middle dimension $n$ and that the action of 
 $\Mod(U)$ on  $\Hcal_n(U)$ has the same kernel as its action on $\pi_U|_n$ (note that the first assertion is a special case of Theorem \ref{thm:connectedness};  we will reprove the second assertion below). Subsequently Bianchi--Miller--Wilson \cite{bmw} dealt with the case when 
 $U$ is a once-punctured connected  oriented closed surface of genus $g>0$,  as mentioned above.
 Bianchi--Stavrou \cite{bs} exhibited elements of  $J^{n-1}\Mod(U)$ ($n\ge 2$) that  act nontrivially on $H_n(\conf_n(U))$. 
 
Richard Hain and Cl\'ement Dupont recently obtained results similar to ours, such as a version of Theorem \ref{thm:generation1}. Their  main tool is  mixed Hodge theory. This  leads   them to use cohomology  with rational coefficients and to use the `weight analogue' of the  Johnson filtration. This weight  filtration is in general coarser than the Johnson filtration, but  the two coincide  for a closed  surface $X$.  They informed us  that in that case  they established that if  the genus of $X$ is $\ge 2$ and $n\le 5$,  then the  kernel of $\Mod(X)$ on $H^n(\conf_n(X); \QQ)$ is precisely the Johnson subgroup $J^n\Mod(X)$.
\smallskip

\subsubsection*{Acknowledgements} 
Although our set-up  is somewhat different  from the  Bianchi--Miller--Wilson paper mentioned above,  
we were certainly inspired by it  and via that paper also by Moriyama's.  We thank  Andrea Bianchi and  Dick Hain  for several helpful comments. We are indebted to Louis Hainaut for pointing out an error in an earlier draft.
\smallskip

\subsubsection*{Notational conventions}
A composition  of  $k\ge 2$  paths 
$\g_1,\cdots ,\g_k$ with domain $[0,1]$ without parentheses: $\g_1\g_2\cdots \g_k$,  denotes their \emph{concatenation}, which is by definition their juxtaposition defined on $[0,k]$, followed by rescaling by $1/k$.

If $V$ is a finitely generated abelian  group,  then for every finite set $I$, $V^{\otimes I}$ is defined as usual, namely as a quotient of the free abelian group generated by $V^I$. This quotient is characterized by the universal  property that for every  multilinear map from $V^I$ to an abelian group $V'$ uniquely  factors through a linear map $V^{\otimes I}\to V'$. So $V^\emptyset =\ZZ$ and $V^{I}\otimes V^J$ is canonically identified with $V^{I\sqcup J}$. For an integer $n\ge 0$, $V^{\otimes n}$ stands for $V^{[n]}$. If $I$ is totally ordered and of size $n$, then we have a unique order preserving bijection of $I$ with $\{1,2, \dots ,n\}$ giving  an isomorphism $V^{\otimes I} \cong V^{\otimes n}$. 

We use the boldface font to indicate a totally ordered  set. For example, if a set  $N$ has been endowed with a total order, we may denote it by $\Nbold$.
\medskip

\tableofcontents

\section{Basic properties of our monoidal functor}
We begin with the proof of the first two properties stated in the introduction. Unless mentioned otherwise, $(U^*,*)$ is a pointed  finite complex  having $\{*\}$ as its only $0$-cell.  We also assume that \emph{each $1$-cell $c$ has been oriented}, so that it defines an element  $\g_c\in\pi_U$. {Let $K$ be a finite set.} We construct a  cell decomposition for $U^*{}^K$ which makes  $D_K(U)$ a subcomplex.

\subsection{The lexicographic stratification of $\RR^d$}
We first do this for $U=\RR^d$, so that $\conf_K(\RR^d)$ becomes a union of cells of dimension $\ge |K|+d-1$. We note that Example \ref{example:basic} is the special case of this in $d=1$; for $d=2$, this is known as the Fuchs-Neuwirth stratification.

Firstly, given any $f\in (\RR^d)^K$, i.e.,  a map $f: K\to \RR^d$, we let $K\twoheadrightarrow\overline K$ be the quotient of $K$ such that $f$ factors through an injection of ${\overline{K}}$ in $\RR^d$. Conversely, this quotient must be thought of as  defining the  `diagonal stratum' of $(\RR^d)^K$ that consists of all 
$ f\in (\RR^d)^K$ that factor through an embedding of $\overline K$ in $\RR^d$. Observe that $\conf_K(\RR^d)$ here appears as the stratum  defined by the case $\overline K=K$.

Now, consider the lexicographic total order $\prec$ on $\RR^d$ defined by 
$x\prec y$ if, for some $k\in [d]$, $x_i=y_i$  for all $i=1, \dots, k-1$ and $x_k<y_k$. Then the induced injection    $\bar f: \overline K\hookrightarrow \RR^d$ endows also $\overline K$ with a total order
inherited from $\prec$. This total order has an additional structure that comes from the lexicographical nature of $\prec$, and it is this  structure that defines the cell containing $f$. We obtain it by  taking the  successive images of $K$ under the maps $(f_1, \dots, f_i)$, where $f=(f_1, \dots, f_d)$, i.e. the maps that remember only the first $i$ coordinates of $f$. This gives a sequence of surjections 
\[
\overline K=K_d\twoheadrightarrow K_{d-1}\twoheadrightarrow \cdots K_1\twoheadrightarrow K_0
\]
(where we added the map to the  singleton $K_0=\RR^0$ for convenience) with a total order on each term such that the maps between them are non-decreasing. We call this a \textit{lexicographic structure} on $K$.

We claim that the  collection of $f\in (\RR^d)^K$ giving rise to the same lexicographic structure is in a natural manner a product of open simplices (and hence makes up a cell). In fact, every $a\in K_i$ for all $0\le i\le d-1$ contributes a factor to this product as follows.
The points in the preimage $K_d(a)\subset K_d$ of  $a$ under $K_d\twoheadrightarrow K_i$ must agree in their first $i$-coordinates when mapped by $f$ (the parametrisation of these $i$ coordinates is the job of other simplices). The quotient $K(a)\subset K_{i+1}$ of $K_d(a)$ (which is also the preimage of $a$ under $K_{i+1}\twoheadrightarrow K_i$) groups these points by common $(i+1)$st coordinates: these are given by an order preserving embedding $K(a)\hookrightarrow \RR$. So this defines an open simplex in 
$\RR^{K(a)}$ (of dimension $|K(a)|$) that we denote by $\tri^{K(a)}$.

So the cell thus defined is identified with $\prod_{a\in K'} \tri^{K(a)}$ and has dimension $\sum_{j=1}^d |K_j|$. This cell lies in $\conf_K(\RR^d)$ if and only if $K=\overline K$. It follows that its dimension then lies  between  $|K|+(d-1)$ and $d|K|$ with the minimal value
$|K|+(d-1)$  taken by  the connected components of $\RR^{d-1}\times\conf_K(\RR)\subset  \conf_K (\RR^d)$.

\subsection{Proofs  of  some of the theorems}\label{subsec:celldecomposition} {As before, let $N$ stands for a finite set of size $n$.}
For the proof  of Theorem \ref{thm:connectedness}, all we need to do is to endow the pair $(U^*{}^N, D_N(U))$, for a general $U$, with a relative cell decomposition such that $\conf_N(U)$ becomes a union of cells of dimension $\ge n$. We do this by using the lexicographic stratification.

\begin{proof}[Proof of  Theorems \ref{thm:connectedness} and \ref{thm:generation}]
We start with the product cell structure on $U^*{}^N$ and refine where necessary. Since a  product of cells is  a product of powers of pairwise distinct cells, we  only need to decompose into the cells  a  product $c^K$, where 
$c$ is a  cell in $U$ and $K\subset N$. We must do this in such a  manner  that $\conf_K(c)$ becomes a union of cells, each of  which is of dimension $\ge d-1+|K|$ with equality when $d=1$. 
But then a homeomorphism $\RR^d\xrightarrow{\cong} c$, where $d=\dim c$, which extends to a continuous map $[-\infty, \infty]^d\to U^*$, prescribes a way to decompose $c^K$ according to the lexicographic stratification. All its cells have dimensions  $\ge (d-1)+|K|$ as desired.

This shows at the same time that any $n$-cell in $\conf_N(U)$ is given by a collection of pairwise distinct $1$-cells $c_1, \dots, c_r$, a decomposition 
$N=N_1\sqcup \cdots \sqcup N_r$ into nonempty subsets, and a total order on each of the parts (denoted $\Nbold_i$). This cell is then identified with  
the product of open  simplices of dimensions $|N_1|, \dots, |N_r|$. The element of $\Hcal_N(U)$ it defines is precisely the image of 
$(\g_{c_1}-1)\otimes\cdots \otimes (\g_{c_r}-1)$ under 
$\tri_U^{(\Nbold_1, \dots,\Nbold_r)}$. This proves  that the maps \eqref{eqn:tensormap1} generate $ H_n(U^*{}^N, D_N(U))=\Hcal_N(U)$ and thus establishes Theorem \ref{thm:generation}.
\end{proof}

\begin{proof}[Proof of Corollary \ref{cor:fundgrpdependence}]
The preceding shows that $\Hcal_N(U)$ depends only on the homotopy type of the $2$-skeleton of $U^*$. In order to show that it depends only on the fundamental group $\pi_U$ it suffices to show that taking the wedge of $U^*$ with a $2$-sphere does not affect $\Hcal_N$. If we think of this 
 $2$-sphere  as the Riemann sphere, then this amounts to saying that the inclusion $U\subset U\sqcup \CC$ induces an isomorphism on $\Hcal_N$. 
In view of the monoidal  property  \eqref{eqn:spatial-lmonoidal}, it then suffices to show that $\Hcal_I(\CC)=H^{cl}_{|I|}(\conf_I(\CC))$ is zero 
whenever $I$ is nonempty. By Poincar\'e--Lefschetz duality, we have $H^{cl}_{|I|}(\conf_I(\CC))\cong H^{|I|}(\conf_I(\CC))$. 
As is well-known, the homotopy type of $\conf_I(\CC)$ is a finite cell complex of dimension $|I|-1$ and hence $H^{|I|}(\conf_I(\CC))=0$.
\end{proof}

\subsection{A ring homomorphism}
We let $\tri^n$ be short for $\tri^{1<2<\cdots <n}$. The   \emph{decomposition formula} says that 
if  $\g_0,\g_1$ are composable homotopy classes of arcs in some space, then 
\begin{equation}\label{eqn:decformula}
\textstyle  \tri^n(\g_0\g_1)=\sum_{k=0}^n \tri^k(\g_0)\times \tri^{n-k}(\g_1).
\end{equation}
It merely expresses the fact that if we have    $0=t_0\le t_1\le\cdots\le t_{n+1}=1$, then there exists a $k\in [n]$ such that $t_k\le \half \le t_{k+1}$. 

This suggests that in our situation  we consider 
\[
\textstyle \widehat\Hcal_\pt(U):=\prod_{n=0}^\infty \Hcal_n(U)t^n.
\]
The K\"unneth product makes  this a graded algebra. This algebra was considered earlier by Moriyama in case $U$ is a disjoint union of copies of $\RR$ and so we will refer to $\widehat\Hcal_\pt(U)$ as the \emph{Moriyama algebra} of $U$.

If we  denote the $\Ical_U$-adic completion of the group ring $\Lambda_U=\ZZ\pi_U$ by
\[
\textstyle \hat\Lambda_{U}:=\varprojlim_n \Lambda_{U}|_n,
\]
then the decomposition formula implies that we have a ring homomorphism 
\begin{equation}\label{eqn:}
\textstyle \tri_{U,t}:  \hat\Lambda_U \to \widehat\Hcal_\pt(U), \quad   \tri_{U,t}(\g):=\sum_{n\ge 0} \tri_U^n(\g)t^n.
\end{equation}

\subsection{The case of the real line}\label{subsect:R}
Recall that  $\uHcal(\RR)$ assigns to $N$ the free abelian group generated by the set of total orders on  $N$. The following lemma makes the  description of
$\uHcal(\RR)$ as a monoidal functor complete.

\begin{lemma}\label{lemma:EZformula}
The exterior product is  given by the Eilenberg-Zilber formula
\[
\textstyle \tri^\Ibold\times \tri^\Jbold=\sum_\Kbold \sign(\Ibold\Jbold,\Kbold) \hspace{2pt} \tri^\Kbold, 
\]
where  the sum is over all  shuffles  of $\Ibold$ and $\Jbold$,  that is,  total orders $\Kbold$ on $I\sqcup J$ which extend the 
total orders $\Ibold$ and $\Jbold$, and $\sign(\Ibold\Jbold, \Kbold)$ is the sign of the permutation 
that takes  the concatenation $\Ibold\Jbold$ to $\Kbold$.
\end{lemma}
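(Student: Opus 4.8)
The plan is to compute $\tri^\Ibold\times\tri^\Jbold$ geometrically, by recognizing it as the relative fundamental class of a product of simplices and subdividing that product into the chambers indexed by shuffles. Set $p=|I|$, $q=|J|$ and write $\Ibold=(i_1<\cdots<i_p)$, $\Jbold=(j_1<\cdots<j_q)$. By Example \ref{example:basic}, $\tri^\Ibold$ is the relative fundamental class in $(\PP^1(\RR)^I, D_I(\RR))$ of the simplex $\{-\infty\le s_{i_1}\le\cdots\le s_{i_p}\le\infty\}$, and likewise for $\tri^\Jbold$. The K\"unneth product is by definition induced by the inclusion $\conf_{I\sqcup J}(\RR)\subset\conf_I(\RR)\times\conf_J(\RR)$, so the first step is to identify $\tri^\Ibold\times\tri^\Jbold$ with the image, in $(\PP^1(\RR)^{I\sqcup J}, D_{I\sqcup J}(\RR))$, of the relative fundamental class of the product polytope
\[
P=\{\, s_{i_1}\le\cdots\le s_{i_p}\,\}\times\{\, s_{j_1}\le\cdots\le s_{j_q}\,\},
\]
carrying the product orientation, i.e.\ the orientation recorded by the slot-ordering $\Ibold\Jbold$. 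This is just the standard compatibility of the homology cross product with products of simplices, followed by pushforward along the inclusion of pairs.

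Next I would subdivide $P$ along the collision walls $\{s_{i_a}=s_{j_b}\}$. Over a point of $P$ with all $p+q$ coordinates distinct, their induced total order is a total order $\Kbold$ of $I\sqcup J$ restricting to $\Ibold$ on $I$ and to $\Jbold$ on $J$, that is, a shuffle; and the closed chamber $C_\Kbold=\{s_{k_1}\le\cdots\le s_{k_{p+q}}\}$ attached to $\Kbold=(k_1<\cdots<k_{p+q})$ is exactly the simplex that, with its standard orientation, represents $\tri^\Kbold$. The chambers have disjoint interiors, cover $P$, and meet along the walls $\{s_{i_a}=s_{j_b}\}$. The decisive observation is that each such wall lies in the collision locus, hence in $D_{I\sqcup J}(\RR)$, since there the map $I\sqcup J\to\PP^1(\RR)$ fails to be injective. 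Together with $\partial P$, which already lies in $D_I(\RR)\times\PP^1(\RR)^J\cup\PP^1(\RR)^I\times D_J(\RR)\subset D_{I\sqcup J}(\RR)$, the entire boundary of the subdivision maps into $D_{I\sqcup J}(\RR)$.

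Consequently, passing to the relative chain complex of $(\PP^1(\RR)^{I\sqcup J}, D_{I\sqcup J}(\RR))$, the shared internal faces are killed and the fundamental cycle of $P$ becomes the sum of the fundamental cycles of the chambers: $[P]=\sum_\Kbold[C_\Kbold]$, each $C_\Kbold$ carrying the product orientation inherited from $P$. The final step is to re-orient each chamber to its own standard orientation. The product orientation is recorded by the slot-sequence $\Ibold\Jbold=(i_1,\dots,i_p,j_1,\dots,j_q)$, whereas $\tri^\Kbold$ is oriented by the slot-sequence $\Kbold$; the two orientations of the common $(p+q)$-dimensional chamber differ precisely by the sign of the permutation carrying $\Ibold\Jbold$ to $\Kbold$, namely $\sign(\Ibold\Jbold,\Kbold)$. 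Thus $[C_\Kbold]=\sign(\Ibold\Jbold,\Kbold)\,\tri^\Kbold$, and summing over shuffles yields the asserted formula.

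I expect the main obstacle to be the sign bookkeeping in the last two steps. One must check that collapsing the internal collision walls turns the fundamental cycle of the product polytope into the \emph{unsigned} sum of chamber cycles, so that no spurious sign enters from the subdivision itself, and that the reorientation of each chamber produces exactly the shuffle sign $\sign(\Ibold\Jbold,\Kbold)$ rather than its inverse or a degree-shifted variant. This is the sign-sensitive heart of the Eilenberg--Zilber identity, here made concrete through the staircase subdivision of a product of simplices, and it is where I would be most careful.
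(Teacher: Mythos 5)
Your proposal is correct and is essentially the paper's own argument, written out in full: the paper's proof likewise decomposes the product of the two open simplices into the shuffle-indexed chambers and reads off the sign of the permutation taking the concatenated ordering to each shuffle. Your added care about the collision walls landing in $D_{I\sqcup J}(\RR)$ (so that the subdivision introduces no internal boundary terms in relative homology) only makes explicit what the paper leaves implicit.
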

\begin{proof}
This is indeed a consequence of the  Eilenberg-Zilber formula.
The product  of the  two open simplices given by $s_1<\cdots <s_k$ and $t_1<\cdots <t_l$ is a signed sum of open $(k+l)$-simplices. Each such simplex comes from putting the $s_i$ and $t_j$ in increasing order: this defines a shuffle of $[k]$ and $[l]$ which  takes the sign of the resulting permutation of the lexicographically ordered  $(s_1, \dots ,s_k, t_1,  \dots ,t_l)$.
\end{proof}

We can also state this in terms of the truncated group ring of $\pi_\RR=\pi_1( \PP^1(\RR), \infty)$.  
If $\g_o\in \pi_\RR$ is the positive  generator, then this tells us that 
\[
\textstyle \tri^n_\RR(\g_o\g_o)=\sum_{k=0}^n \tri^k_\RR(\g_o)\times \tri^{n-k}_\RR(\g_o)=\sum_{k=0}^n \tri^k\times \tri^{n-k}
\]
 and more generally that
$\tri^n_\RR(\g_o^r)$ is a noncommutative polynomial in the $\{\tri^k\}_{k=1}^n$ that is homogeneous of degree $n$
if we assign to  $\tri^k$ the weight $k$.

On the other hand, if we put $c_o:=\g_o-1$, then  $\Lambda_\RR|_n =\ZZ[c_o]/(c_o)^{n+1}$ and $\Ical_\RR|_n=(c_o)/(c_o)^{n+1}$.
The above formula shows that the  $\{\g_o^k-1\}_{k=1}^n$ maps to linearly independent elements
of $\Hcal_n(\RR)$. We thus find: 

\begin{corollary}\label{cor:}
The map $\tri^n_\RR: \Ical_\RR|_n\to \Hcal_n(\RR)$ is injective.
\end{corollary}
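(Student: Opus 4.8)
The plan is to exhibit a $\ZZ$-basis of $\Ical_\RR|_n$ whose images under $\tri^n_\RR$ are linearly independent in $\Hcal_n(\RR)$. Recall from the preceding discussion that $\Lambda_\RR|_n=\ZZ[c_o]/(c_o^{n+1})$ with $c_o=\g_o-1$, so that $\Ical_\RR|_n$ is free on $c_o,c_o^2,\dots,c_o^n$. Since $\tri_{\RR,t}$ is a ring homomorphism sending $\g_o$ to $S:=\sum_{m\ge 0}\tri^m t^m$, it sends $c_o$ to $S-1=\sum_{m\ge 1}\tri^m t^m$, and therefore
\[
\tri^n_\RR(c_o^j)=[t^n](S-1)^j=\sum_{\substack{a_1+\cdots+a_j=n\\ a_i\ge 1}}\tri^{a_1}\times\cdots\times\tri^{a_j}.
\]
I would first note that this element lies in $F^j\Hcal_n(\RR)$, since every summand is a product of $j$ nonempty factors.

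The computational heart of the argument is to expand each product by the shuffle formula (Lemma \ref{lemma:EZformula}). Writing a total order $\Kbold$ on $[n]$ as the word listing its elements in increasing $\Kbold$-order, and letting $B_1,\dots,B_j$ be the consecutive intervals of sizes $a_1,\dots,a_j$, one gets $\tri^{a_1}\times\cdots\times\tri^{a_j}=\sum_\Kbold \sign(\Kbold)\,\tri^\Kbold$, the sum being over those $\Kbold$ whose restriction to each $B_i$ is increasing; the sign is $\sign(\Kbold)$ because the concatenation of the blocks is the identity word. This last condition says precisely that the descent set of $\Kbold^{-1}$ is contained in the set of partial sums $\{a_1,a_1+a_2,\dots\}$. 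Summing over all compositions of $n$ into $j$ positive parts and using the bijection between such compositions and $(j-1)$-subsets of $[n-1]$, I expect to arrive at the clean formula
\[
\big[\text{coefficient of } \tri^\Kbold \text{ in } \tri^n_\RR(c_o^j)\big]=\sign(\Kbold)\binom{n-1-d}{\,j-1-d\,},\qquad d:=\#\operatorname{Des}(\Kbold^{-1}),
\]
with the convention that the binomial vanishes outside $0\le j-1-d\le n-1-d$. Establishing this identity — the sign bookkeeping together with the descent-set translation — is the step I expect to be the main obstacle; everything else is formal.

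Granting the formula, linear independence follows from a triangularity argument. For each $j\in\{1,\dots,n\}$ choose a total order $\Kbold_j$ with $\#\operatorname{Des}(\Kbold_j^{-1})=j-1$ (possible since $j-1$ ranges over $\{0,\dots,n-1\}$, e.g.\ take $\Kbold_j^{-1}$ with descent set $\{1,\dots,j-1\}$). The coefficient of $\tri^{\Kbold_j}$ in $\tri^n_\RR(c_o^{j'})$ is then $\sign(\Kbold_j)\binom{n-j}{\,j'-j\,}$, which vanishes for $j'<j$ and equals $\sign(\Kbold_j)=\pm 1$ for $j'=j$. Hence the $n\times n$ integer matrix whose $(j,j')$-entry pairs the functional ``coefficient of $\tri^{\Kbold_j}$'' against $\tri^n_\RR(c_o^{j'})$ is upper triangular with $\pm 1$ on the diagonal, so it has determinant $\pm 1$ and lies in $\GL_n(\ZZ)$. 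Consequently the images $\tri^n_\RR(c_o^1),\dots,\tri^n_\RR(c_o^n)$ are linearly independent over $\ZZ$, and since they are the images of a basis of $\Ical_\RR|_n$, the map $\tri^n_\RR$ is injective.
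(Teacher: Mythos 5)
Your proof is correct and follows the same strategy as the paper: exploit that $\tri_{\RR,t}$ is a ring homomorphism to express the image of a basis of $\Ical_\RR|_n$ as explicit noncommutative polynomials in the $\tri^k$, then deduce linear independence by a triangularity argument against the basis $\{\tri^\Kbold\}$ of $\Hcal_n(\RR)$. The paper merely asserts that the resulting elements (there, the images of $\g_o^k-1$, which differ from your $c_o^j$ by a unimodular triangular change of basis) are linearly independent, whereas you supply the shuffle/descent-set computation that actually verifies it; your descent-count formula and the choice of test orders $\Kbold_j$ are both correct.
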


If  $U$ is a disjoint union of a finite number copies of $\RR$, i.e., of the form $U_E:=E\times \RR$, where $E$ is a finite set, then $U_E^*$ is  a wedge of copies of $\PP^1(\RR)$ with base point $\infty$. If we put a total order on $E$ 
so that it gets identified with $[r]$ for some $r$, then the cells in $\conf_N(U_E)$ are naturally oriented and we get an identification of  
$\uHcal(U_E)$ with the $r$-fold tensor product of
$\uHcal(\RR)$.

\subsection{Degeneracy and truncation}\label{subsection:truncation}
Given distinct $i,j\in N$, denote by $N\twoheadrightarrow N_{ij}$ the quotient of $N$ obtained by identifying these two elements.  
So $N_{ij}$ has size $n-1$. We define a \emph{degeneracy  map} 
\[
\p_{ij}=\p^U_{ij}: \Hcal_N(U)\to \Hcal_{N_{ij}}(U)
\]
(which  will be  symmetric in $i$ and $j$) as follows. 
The surjection $N\to N_{ij}$ induces an embedding  $s_{ij}: U^*{}^{N_{ij}}\to U^*{}^N$ which  parametrizes a hyperdiagonal and thus lands in  
$D_N(U)$.  Let $D^{ij}_N(U)$ denote   the closure of the complement of the image of $s_{ij}$ in $D_N(U)$. 
Then  $D_N(U)$ is of course the union of the image of $s_{ij}$ and $D^{ij}_{N}(U)$. We find that $s_{ij}^{-1}D^{ij}_{N}(U)=D_{N_{ij}}(U)$ so that   $s_{ij}$ defines  a relative homeomorphism  of the pair $(U^*{}^{N_{ij}},D_{N_{ij}}(U))$ onto the pair $(D_N(U), D^{ij}_N(U))$. We then let
$\p_{ij}: \Hcal_{N}(U)\to \Hcal_{N_{ij}}(U)$ be the connecting homomorphism  of the  long exact homology sequence for the triple 
 $(U^*{}^N,D_N(U), D^{ij}_N(U)) $:
\begin{multline}\label{eqn:deg}
 \cdots \to H_{n}(U^*{}^N, D_N(U)))\xrightarrow{\p_{ij}}  H_{n-1}(D_N(U),  D^{ij}_N(U)))\to\\
 \to  H_{n-1}(U^*{}^N,  D^{ij}_N(U))\to\cdots
\end{multline}
This  map is functorial in $U$ and so we have for every $\g\in \pi_U$ a commutative diagram
\[
\begin{tikzcd}
\Hcal_N(\RR)\arrow[r, "\Hcal_N(\g)"]\arrow[d, "\p^\RR_{ij}"] & \Hcal_N(U)\arrow[d, "\p^U_{ij}"]\\
\Hcal_{N_{ij}}(\RR)\arrow[r, "\Hcal_{N_{ij}}(\g)"] & \Hcal_{N_{ij}}(U).
\end{tikzcd}
\]
If $N$ is equipped with a total order $\Nbold$, then (by definition) the top arrow sends the basis element $\tri^\Nbold\in \Hcal_N(\RR)$ to $\tri^\Nbold_U(\g)$. 
Similarly, the evaluation of $\partial_{ij}$ on $\tri_U^{\Nbold_1}(\g_1)\times \cdots\times \tri_U^{\Nbold_r}(\g_r)\in \Hcal_N(U)$ is determined by the evaluation of $\partial_{ij}$ on $\tri^{\Nbold_1}\times \cdots\times \tri^{\Nbold_r}$, where $\g_1, \ldots, \g_r\in \pi_U$, and $\Nbold_1,\ldots, \Nbold_r$ is a partition of $N$ with a total order on each part. It is clear that if the pair  $(i,j)$  appears consecutively in the same part of the partition, (i.e.,  if one is a successor of the other for this total order) we have an induced partition $\Nbold_{1,ij}, \ldots, \Nbold_{r, ij}$ on the set $N_{ij}$ with a total order on each part.

\begin{lemma}\label{lemma:onetrunccomptri}
If the pair $\{i,j\}$ appears consecutively in the same part of the partition $\Nbold_1,\ldots, \Nbold_r$, and $pr(\{i,j\})$ is the number of predecessors of that pair in the concatenated total order $\Nbold_1\Nbold_2\cdots\Nbold_r$, then put 
$\eps_{ij}(\Nbold_1,\Nbold_2,\ldots,\Nbold_r):=(-1)^{{pr(\{i,j\})}}$; otherwise  put  $\eps_{ij}(\Nbold_1,\Nbold_2,\ldots,\Nbold_r):=0$. Then 
$$\p_{ij}(\tri^{\Nbold_1}\times \cdots \times \tri^{\Nbold_r})= \eps_{ij}(\Nbold_1,\Nbold_2,\ldots,\Nbold_r)\tri^{\Nbold_{1,ij}}\times \cdots \times \tri^{\Nbold_{r,ij}}$$ 
(so read zero if the pair $\{i,j\}$ appears nonconsecutively or in different parts of the partition). 
\end{lemma}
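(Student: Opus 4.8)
The plan is to unwind the definition of $\p_{ij}$ as a connecting homomorphism and turn it into an explicit boundary computation on a product of simplices. First I would reduce to the relevant model: using the functoriality of $\p_{ij}$ in $U$ recorded in the commutative square preceding the statement, together with the strong monoidal identity \eqref{eqn:spatial-lmonoidal}, it suffices to evaluate $\p_{ij}$ on the product cell $\tri^{\Nbold_1}\times\cdots\times\tri^{\Nbold_r}$ realised as an honest product of open order simplices in $\conf_N$ of a disjoint union of $r$ copies of $\RR$, with the factor $\tri^{\Nbold_s}$ sitting in the $s$-th copy and oriented by its increasing coordinates. The decisive observation is a geometric reading of \eqref{eqn:deg}: since $s_{ij}$ restricts to a relative homeomorphism of $(U^*{}^{N_{ij}},D_{N_{ij}}(U))$ onto $(D_N(U),D^{ij}_N(U))$, the map $\p_{ij}$ sends the class of a relative cycle to the $s_{ij}^{-1}$-image of the part of its topological boundary lying on the hyperdiagonal $\{t_i=t_j\}$, read modulo the remaining boundary strata $D^{ij}_N(U)$.

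With this description I would compute the boundary of the product cell by the Leibniz rule. Each factor $\tri^{\Nbold_s}$ is an order simplex whose codimension-one faces are of two kinds: two end faces on which an extreme coordinate equals $\infty$ (these map into the $*$-locus, hence into $D^{ij}_N(U)$), and interior diagonal faces $\{t_a=t_b\}$ for pairs $a,b$ consecutive in $\Nbold_s$. Only a face contained in $\{t_i=t_j\}$ survives modulo $D^{ij}_N(U)$, and this produces the dichotomy. If $i$ and $j$ lie in different parts, then on the cell $\{t_i=t_j\}$ is met only where both coordinates collapse to $*$, a stratum of $D^{ij}_N(U)$, so nothing survives; if $i,j$ lie in the same part but are not consecutive, then $t_i=t_j$ forces the intervening coordinates to coincide as well, so $\{t_i=t_j\}$ meets the closed cell in codimension $\ge 2$ and is not a facet. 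In both cases $\p_{ij}=0$, as asserted. When $i,j$ are consecutive in $\Nbold_s$, exactly one facet survives, namely the diagonal facet of $\tri^{\Nbold_s}$; identifying $i$ with $j$ turns $\Nbold_s$ into $\Nbold_{s,ij}$, and $s_{ij}^{-1}$ carries this facet, orientations included, precisely onto $\tri^{\Nbold_{s,ij}}$, leaving the remaining factors unchanged. This gives $\p_{ij}(\tri^{\Nbold_1}\times\cdots\times\tri^{\Nbold_r})=\pm\,\tri^{\Nbold_{1,ij}}\times\cdots\times\tri^{\Nbold_{r,ij}}$.

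It remains to pin down the sign, which is the only delicate point and the main obstacle. Two contributions combine: the Leibniz rule prefixes the boundary of the $s$-th factor with $(-1)^{|N_1|+\cdots+|N_{s-1}|}$, coming from the dimensions of the preceding factors, while the Stokes boundary of the single order simplex $\tri^{\Nbold_s}$ attaches to its diagonal facet at the pair $\{i,j\}$ the sign $(-1)^{q-1}$, where $q$ is the position of $i$ within $\Nbold_s$. Since the predecessors of the pair in the concatenation $\Nbold_1\Nbold_2\cdots\Nbold_r$ are exactly the $|N_1|+\cdots+|N_{s-1}|$ elements of the earlier parts together with the $q-1$ elements preceding $i$ inside $\Nbold_s$, the two exponents add up to $pr(\{i,j\})$, so the total sign is $(-1)^{pr(\{i,j\})}=\eps_{ij}(\Nbold_1,\dots,\Nbold_r)$. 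The bookkeeping to be verified is that the Stokes orientation induced on the diagonal facet of $\tri^{\Nbold_s}$ matches the chosen orientation of $\tri^{\Nbold_{s,ij}}$, so that no further sign appears; this is a direct computation of the contraction of the coordinate volume form by the outward normal $\partial_{t_i}-\partial_{t_j}$ of the facet.
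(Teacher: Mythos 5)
Your proof is correct and follows essentially the same route as the paper's: reduce to the product of order simplices, observe that the locus $t_i=t_j$ is a codimension-one face only when $i,j$ are consecutive in the same factor, and combine the simplicial face sign with the Leibniz sign to get $(-1)^{pr(\{i,j\})}$. You simply spell out in more detail the steps the paper leaves implicit (the geometric reading of the connecting homomorphism, the vanishing in the nonconsecutive and different-part cases, and the sign bookkeeping), and the arithmetic checks out against the paper's use of the lemma, e.g.\ $\p_{n-1,n}(\tri^n)=(-1)^{n}\tri^{n-1}$.
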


\begin{proof}
In the case $r=1$, assume that  $\Nbold_1$ is, without loss of generality, the order $1<2<\cdots <n$. Then the interior of $\tri^{\Nbold}$ is the subset of $\RR^n$
given by $t_1<t_2<\cdots <t_n$ and the locus $t_i=t_j$ appears as a  codimension one face only if $i,j$ is a consecutive pair; in that case it is the $(pr(\{i,j\})+1)$-th face and acquires the claimed sign. For $r>1$, we only get a contribution from the boundary of a simplex containing $i$ and $j$ consecutively; in that case, the Leibniz rule gives us the correct sign.
\end{proof}

\begin{corollary}\label{cor:}
The degeneracy  map $\p_{ij}: \Hcal_N(U)\to \Hcal_{N_{ij}}(U)$ is onto.
\end{corollary}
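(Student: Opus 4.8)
The plan is to produce, for each element of a generating set of $\Hcal_{N_{ij}}(U)$, an explicit preimage under $\p_{ij}$. By Theorem \ref{thm:generation}, such a generating set is furnished by the products $\tri^{\Mb_1}(\g_{c_1})\times\cdots\times\tri^{\Mb_s}(\g_{c_s})$, where each $c_k$ is an oriented $1$-cell of $U^*$ and $\Mb_1,\dots,\Mb_s$ ranges over the partitions of $N_{ij}$ into nonempty totally ordered parts. Since the image of a homomorphism of abelian groups is a subgroup, it will suffice to hit each such generator.

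First I would fix such a generator and build a lift. Write $\bar e\in N_{ij}$ for the common image of $i$ and $j$ under $N\twoheadrightarrow N_{ij}$; it lies in exactly one of the parts, say $\Mb_p$. I form the totally ordered set $\Nbold_p$ from $\Mb_p$ by replacing the single entry $\bar e$ with $i$ immediately followed by $j$, and I set $\Nbold_k:=\Mb_k$ for $k\neq p$ (identifying $N_{ij}\ssm\{\bar e\}$ with $N\ssm\{i,j\}$). Then $\Nbold_1,\dots,\Nbold_s$ is a partition of $N$ into nonempty totally ordered parts, so that $y:=\tri^{\Nbold_1}(\g_{c_1})\times\cdots\times\tri^{\Nbold_s}(\g_{c_s})$ is a well-defined element of $\Hcal_N(U)$, and by construction the pair $\{i,j\}$ occurs consecutively inside the single part $\Nbold_p$.

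Next I would evaluate $\p_{ij}(y)$. As noted just before Lemma \ref{lemma:onetrunccomptri}, this evaluation is determined by that of $\p_{ij}$ on the corresponding product $\tri^{\Nbold_1}\times\cdots\times\tri^{\Nbold_s}$ over $\RR$, so Lemma \ref{lemma:onetrunccomptri} applies directly. Because $\{i,j\}$ is consecutive and confined to one part, the sign $\eps_{ij}(\Nbold_1,\dots,\Nbold_s)=(-1)^{pr(\{i,j\})}$ is a unit, and the lemma identifies $\Nbold_{k,ij}$ with $\Mb_k$ for every $k$; hence $\p_{ij}(y)=\eps_{ij}(\Nbold_1,\dots,\Nbold_s)\,\tri^{\Mb_1}(\g_{c_1})\times\cdots\times\tri^{\Mb_s}(\g_{c_s})$. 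Multiplying by this invertible sign recovers the chosen generator, so it lies in $\im(\p_{ij})$, and surjectivity follows.

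There is no real obstacle here: the statement is essentially a bookkeeping consequence of Lemma \ref{lemma:onetrunccomptri} together with the generation result of Theorem \ref{thm:generation}. The only point requiring a moment's care is that the identified point $\bar e$ always sits inside a single part, so that splitting it into the consecutive pair $i,j$ produces a legitimate generator of $\Hcal_N(U)$ whose degeneracy is, up to an invertible sign, exactly the generator we started with.
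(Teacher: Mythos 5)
Your proposal is correct and follows essentially the same route as the paper: lift each generator of $\Hcal_{N_{ij}}(U)$ from Theorem \ref{thm:generation} by splitting the identified point into the consecutive pair $i,j$ within its part, then apply Lemma \ref{lemma:onetrunccomptri} to see the degeneracy returns the generator up to an invertible sign. The paper's proof is just a more terse version of the same argument.
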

\begin{proof}
We first observe that every numbered partition of $N_{ij}$ endowed with a total order on each part, determines such a structure for $N$: a part of $N$ is a  preimage of a part on $N_{ij}$ and such a part inherits a  total order from its image in $N_{ij}$, where for the part containing the pair $\{i,j\}$,  we stipulate that  $j$ is the successor of $i$. Then 
each generator of  $\Hcal_{N_{ij}}(U)$ provided by Theorem \ref{thm:generation}, is the image of a generator it provides for $\Hcal_{N}(U)$.
\end{proof}

By taking $N=[n]$ and  $\{i,j\}=\{n-1,n\}$, we get  a map $\p_{n-1,n}: \Hcal_n(U)\to \Hcal_{n-1}(U)$. 
We call this a \emph{truncation map}, because of the following lemma.

\begin{lemma}\label{lemma:truncation}
The diagram of truncations
\begin{equation}\label{eq:truncations}
    \begin{tikzcd}
    \Ical_{U}|_{n}\dar[two heads]\rar & \Hcal_{n}(U)\arrow[d, two heads, "\p_{n-1,n}"]\\
    \Ical_{U}|_{n-1}\rar & \Hcal_{n-1}(U)
\end{tikzcd}
\end{equation}
commutes up to the sign $(-1)^{{n}}$.
\end{lemma}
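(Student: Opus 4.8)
The plan is to verify the identity on group elements and then extend by additivity. The elements $\g-1$ with $\g\in\pi_U$ span $\Ical_U$, and since $\tri_U^k$ kills the unit $1$ for $k>0$, it suffices to prove that
\[
\p_{n-1,n}\bigl(\tri_U^n(\g)\bigr)=(-1)^n\,\tri_U^{n-1}(\g)
\]
holds for every $\g\in\pi_U$, where on each side $\g$ is read in the relevant truncated group ring (the bottom map $\tri_U^{n-1}$ precomposed with the quotient $\Ical_U|_n\twoheadrightarrow\Ical_U|_{n-1}$ sends $\g-1$ to $\tri_U^{n-1}(\g)$, by the same vanishing). Once this is established on group elements it holds on all of $\Lambda_U|_n$, and restricting to $\Ical_U|_n$ gives exactly the commutativity of \eqref{eq:truncations} up to the sign $(-1)^n$. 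Throughout I take $N=[n]$ with its standard order, so $\tri^\Nbold=\tri^n$, and $\{i,j\}=\{n-1,n\}$.

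First I would use the functoriality of the degeneracy map in the spatial variable. The commutative square relating $\Hcal_N(\g)$, $\p^\RR_{ij}$ and $\p^U_{ij}$, applied to the generator $\tri^n\in\Hcal_N(\RR)$, yields
\[
\p_{n-1,n}^U\bigl(\tri_U^n(\g)\bigr)=\p_{n-1,n}^U\bigl(\Hcal_N(\g)(\tri^n)\bigr)=\Hcal_{N_{ij}}(\g)\bigl(\p^\RR_{n-1,n}(\tri^n)\bigr).
\]
This reduces the whole statement to computing the truncation map on the single generator $\tri^n$ of $\Hcal_n(\RR)$, i.e.\ to the case $U=\RR$.

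That computation is precisely Lemma \ref{lemma:onetrunccomptri} in the case $r=1$. With the single part $\Nbold=(1<2<\cdots<n)$ the pair $\{n-1,n\}$ is consecutive, its number of predecessors is $pr(\{n-1,n\})=n-2$, and the order inherited on the quotient $N_{ij}$ is the standard order on a set of size $n-1$. Hence
\[
\p^\RR_{n-1,n}(\tri^n)=(-1)^{n-2}\,\tri^{n-1}=(-1)^n\,\tri^{n-1}.
\]
Substituting this into the previous display and using $\Hcal_{N_{ij}}(\g)(\tri^{n-1})=\tri_U^{n-1}(\g)$ gives the desired identity.

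I do not expect a genuine obstacle: the two preceding lemmas do the real work and what remains is bookkeeping. The only points demanding care are the sign and the identifications feeding Lemma \ref{lemma:onetrunccomptri}, namely checking that $pr(\{n-1,n\})=n-2$ for the standard order (so that the sign is $(-1)^n$ rather than $(-1)^{n-1}$), and identifying $N_{ij}$ with its inherited total order with $[n-1]$ so that $\tri^{\Nbold_{1,ij}}$ is literally $\tri^{n-1}$ and the bottom horizontal arrow of \eqref{eq:truncations} is indeed $\tri_U^{n-1}$.
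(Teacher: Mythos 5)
Your proof is correct and follows essentially the same route as the paper's: both reduce to computing $\p^\RR_{n-1,n}(\tri^n)$ via the functoriality square and then invoke Lemma \ref{lemma:onetrunccomptri} with $pr(\{n-1,n\})=n-2$ to get the sign $(-1)^n$. You merely spell out the reduction to group elements $\g-1$ and the spatial naturality step more explicitly than the paper does.
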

\begin{proof}
The horizontal maps are induced by $\g\in \pi_U\mapsto \tri^n(\g)$ resp.\ $\tri^{n-1}(\g)$. 
Since the pair $(n-1,n)$ has $n-2$ predecessors, Lemma \ref{lemma:onetrunccomptri}  implies that  $\p_{n-1,n}(\tri^n)=(-1)^n \tri^{n-1}$ and so the left vertical map is given by restriction.
In terms of the group rings, this  restriction amounts to a truncation.
\end{proof}

If $k\le n$, we may iteratively apply the degeneracy maps $\partial_{n-1,n}, \partial_{n-2,n-1}, \ldots,$ $\partial_{k,k+1}$ to get a map surjective map $\partial^k_n:\Hcal_n(U)\to \Hcal_k(U)$. 
\begin{lemma}\label{lem:iterateddegeneracy}
    The iterated degeneracy $\Hcal_n(U)\to \Hcal_k(U)$
    annihilates the subspace $F^{k+1}\Hcal_n(U)$.
\end{lemma}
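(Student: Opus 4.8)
The plan is to reduce the statement to a purely combinatorial fact about partitions and then feed it through Lemma \ref{lemma:onetrunccomptri}. First I would observe that, by Theorem \ref{thm:generation} together with the associativity of the K\"unneth product $\times$, the subspace $F^{k+1}\Hcal_n(U)$ is spanned by products $\tri_U^{\Nbold_1}(\g_1)\times\cdots\times\tri_U^{\Nbold_r}(\g_r)$ with $r\ge k+1$, where $N=N_1\sqcup\cdots\sqcup N_r$ is a partition of $N=[n]$ into nonempty parts, each carrying a total order, and $\g_1,\dots,\g_r\in\pi_U$. Indeed, each factor $\Hcal_{N_i}(U)$ appearing in the definition of $F^{k+1}$ is itself spanned by $\tri$-products of at least one factor, and a K\"unneth product of $r$ such products is again a $\tri$-product whose total number of factors is $\ge r\ge k+1$. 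So it suffices to show that $\partial^k_n$ annihilates every such generator once $r\ge k+1$.

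Next I would unwind the iterated degeneracy. By the functoriality diagram preceding Lemma \ref{lemma:onetrunccomptri}, the evaluation of each $\partial_{ij}$ on such a product is governed by its evaluation on the underlying $\tri^{\Nbold_1}\times\cdots\times\tri^{\Nbold_r}$, and by that lemma this value is either zero or $\pm$ a single generator in which $i$ and $j$ have been merged into one (ordered) block. Applying this to the sequence $\partial_{n-1,n},\partial_{n-2,n-1},\dots,\partial_{k,k+1}$, where at each stage the newly identified pair consists of the next element and the merged block produced so far (suitably relabelled), the whole composite $\partial^k_n$ is likewise either zero or a single signed generator. Since we only need vanishing, the signs $\eps_{ij}$ play no role.

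The key point is a necessary condition for nonvanishing. For the first step $\partial_{n-1,n}$ not to kill the generator, the elements $n-1$ and $n$ must be consecutive in a common part, say $N_a$. At the second step the merged block $\{n-1,n\}$ lies in $N_a$, so for $\partial_{n-2,n-1}$ to survive, the element $n-2$ must lie in that same part; proceeding inductively, nonvanishing of $\partial^k_n$ forces $\{k,k+1,\dots,n\}\subseteq N_a$. The only delicate step here, and the main bookkeeping obstacle, is tracking the relabelling of the growing merged block through the successive degeneracies so as to justify the "common part" condition at every stage; once this is done the rest is a counting argument.

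Finally I would count. If $\{k,\dots,n\}\subseteq N_a$, then every part other than $N_a$ is a nonempty subset of the complement $\{1,\dots,k-1\}$, so there are at most $k-1$ of them. Hence $r-1\le k-1$, i.e.\ $r\le k$, which contradicts $r\ge k+1$. Therefore $\partial^k_n$ kills each spanning generator of $F^{k+1}\Hcal_n(U)$, and so annihilates $F^{k+1}\Hcal_n(U)$ entirely. (The extreme cases $k=n$, where $\partial^n_n=\id$ and $F^{n+1}=0$, and $k=1$, are covered by the same argument.)
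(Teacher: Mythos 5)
Your proposal is correct and takes essentially the same route as the paper's proof: both reduce to the spanning generators of $F^{k+1}\Hcal_n(U)$, invoke Lemma \ref{lemma:onetrunccomptri} to see that survival under the iterated degeneracy forces all of $k,k+1,\dots,n$ into a single part of the partition, and then derive a contradiction with $r\ge k+1$ by a pigeonhole count. The only (immaterial) difference is in that last count: you bound the number of parts outside the distinguished one, while the paper bounds the size of each part by $n-r+1\le n-k$.
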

\begin{proof}
    From Lemma \ref{lemma:onetrunccomptri}, the only way this iterated degeneracy might not vanish on an element 
    $\tri^{\Nbold_1}(\g_1)\times \cdots\times \tri^{\Nbold_r}(\g_r)$ is if all elements $k,k+1, \ldots, n$ appear in the same part of the partition $N_1,\ldots, N_r$. On the other hand, the space $F^{k+1}\Hcal_n(U)$ is spanned by elements $\tri^{\Nbold_1}(\g_1)\times \cdots\times \tri^{\Nbold_r}(\g_r)$ with $r\ge k+1$ and with each $N_1,\ldots, N_r$ non-empty. This forces the size of each part $N_i$ to be $\le n-r+1\le n-k$ so that, in particular, it cannot contain the $n-k+1$ elements $k,k+1, \ldots, n$. The conclusion follows.
\end{proof}

\begin{remark}\label{rem:}
If $U$ is a Riemann surface of finite type, then the degeneracy map can be understood as a residue map (we are taking the residue of a logarithmic  $n$-form on $\conf_N(U)$ along the hyperdiagonal $z_i-z_j$). This is a good mnemonic for the sign rules 
when we consider iterated degeneracies. Suppose $\{i,j\}$ and $\{k, l\}$ are \emph{distinct} unordered pairs in $N$. Then the associated  quotients  $N\to N_{ij}$ and  $N\to N_{kl}$ fit in pushout diagram
\[
 \begin{tikzcd}
    N\dar\rar &  N_{ij}\dar\\
   N_{kl}\rar &  N_{ij, kl}
\end{tikzcd}
\]
If the two pairs are disjoint, then  $\p_{ij}\p_{kl}=-\p_{kl}\p_{ij}$. Otherwise, we can assume that the second pair is $\{j, k\}$ with $i,j,k$ pairwise distinct.
In that case $\p_{i[jk]}\p_{jk}+\p_{j[ki]}\p_{ki}+\p_{k[ij]}\p_{ij}=0$, where $[ij]$ stands for the image of $\{i,j\}$ in $N_{ij}$. These operations appear implicitly  in the work of Cohen-Taylor \cite{cohen_taylor}.
\end{remark}

\subsection{The cellular  chain complex}

In the proof of Theorem \ref{thm:connectedness} we put a total ordering  on each  cell of $U$. This orients that cell. We want to orient the cells of $\conf_N(U)$ as well.  If $c$ is a cell in $U$, then
given  $f\in\conf_K(c)$, the map $f:K\to c$ is injective. The iterated   lexicographical  total order on $c$ (with as many levels as the dimension of $c$) determines then also such a structure on $K$. The cell which contains $f$ is the connected component of the set of all $f'\in\conf_K(c)$ that give rise to the same such structure on $K$. Note that this structure also  determines  an orientation of that cell. A cell of $\conf_N(U)$
is a product of cells of this type  where the product is over a  collection of pairwise distinct cells of $U$. Hence an orientation of this 
cell is obtained by imposing a total order on the collection of the cells of $U$. We make this assumption from now on so that each cell determines a generator of the cellular chain complex 
\[
\Cscr_N(U)_\pt:=\Cscr_\pt( U^*{}^N, D_N(U)).
\]
This complex is zero in degree $<n$ and its  homology is that of  the homology with closed support of  $\conf_N(U)$.
The assignment $N\mapsto \Cscr_N(U)_\pt$ defines a monoidal functor on $\FB$ that takes values  in the category of bounded below chain complexes.

\subsection{Attaching a $2$-cell}\label{subsec:attaching2cell}
In view of the  central role  of $\uHcal(\RR)$ in what is going to follow, we often suppress its  argument and write
\begin{equation}\label{eqn:suppressR}
\Hcal_N:=\Hcal_N(\RR), \quad \uHcal:=\uHcal_N(\RR),  
\end{equation}
where we trust  the reader not to confuse this with the different use of this symbol in \eqref{eqn:2functor}.
In this section we assume that we are given a $2$-cell $e$ of $U$ such that $U_o:=U\ssm \{e\}$ is closed in $U$. In other words, $U^*$ is obtained 
from $U^*_o$ by attaching a single $2$-cell. We are going to  express the closed support homology of the configuration spaces of $U$ in terms of this of $U_o$.

It will be convenient to assume that the $2$-cell $e$ is parametrized by the upper half plane 
$\HH_+\subset \CC\subset \PP^1(\CC)$ (rather than the interior of the standard $2$-disk). This has  $\PP^1(\RR)$ as its boundary.  
We assume that the  attaching  map $att: \PP^1(\RR)\to U_o^*$ takes $\infty$ to $*$ so that $U^*$ can be identified with  
$\overline\HH_+\cup_{att} U_o^*$. As this discussion is only about the homotopy type of $(U^*,*)$ and the latter  depends only on the image of 
$\att$ in $\pi_{U_o}$ (which we shall denote by $\zeta$), we may (and will) assume that $att$ maps to the $1$-skeleton of $U_o^*$. 
It is clear that  the inclusion $U^*_o\subset U^*$ induces a surjection $\pi_{U_o}\to \pi_U$ whose kernel is the normal subgroup generated by $\zeta$
(and hence  the induced ring homomorphism  $\Lambda_{U_o}\to \Lambda_{U}$ is surjective with kernel generated by the 2-sided ideal generated by $\zeta-1$).
It is also clear that $\Cscr_N(U_o)_\pt$ appears here as a subcomplex of $\Cscr_N(U)_\pt$.  

To make this precise,  we endow $e$ with a lexicographical ordering via its parametrization by the upper half plane, by first considering 
the imaginary part: $x+\sqrt{-1} y\prec x'+\sqrt{-1}y'$ if $y<y'$ or $y=y'$ and $x<x'$ (but use the standard counterclockwise orientation of $\HH_+$ to orient $e$).  So given a finite set $K$, then a cell of $\conf_K(e)$ is 
given by a total order on $K$ that  has a lexicographical structure with two levels:  a decomposition $K=K_1\sqcup\cdots \sqcup K_r$ into 
nonempty subsets and a total order $\Kbold_i$ 
on each $K_i$. This cell consists of $f: K\to \HH_+$ with $\im (f)$ constant on $K_i$ with a value smaller than its value on $K_{i+1}$ and 
with $\re(f)|K_i$ order preserving.  We thus find an identification  of modules (not of complexes!)
\begin{multline*}
\Cscr_N(U)_\pt=\oplus_{N_0\subset N}\, \Cscr_{N\ssm N_0}(\HH_+)_\pt\otimes \Cscr_{N_0}(U_o)_\pt=\\
=\oplus_{r\ge 0}\oplus_{(N_0, N_1, \dots ,N_r)}\; \Hcal_{N_r}\otimes\cdots\otimes  \Hcal_{N_1}\otimes\Cscr_{N_0}(U_o)_\pt,
\end{multline*}
where  in the last line the sum is over all decompositions $N=N_0\sqcup N_1\sqcup \cdots N_r$ into subsets ($r=0,1,\dots$) with $N_i$  \emph{nonempty} when $i>0$.  To make this an identification of \emph{graded} modules, we should replace each tensor factor $\Hcal_{N_i}$ by its K\"unneth product with the fundamental class of the positive imaginary axis, i.e., the natural generator of  element of $H^{cl}_1(\sqrt{-1}\RR_{>0})$. This has the effect of putting  $\Hcal_{N_i}$  in degree $|N_i|+1$; we indicate this by writing 
$\Hcal_{N_i}[1]$ instead (\footnote{More generally, if $\Bcal_\pt$ is a \emph{homological} complex, then $\Bcal[m]_\pt$ is 
defined by $\Bcal[m]_k=\Bcal_{k-m}$ and the boundary operator is multiplied with  $(-1)^m$.
This is equivalent with  taking the tensor product of $\Bcal_\pt$ with the single term complex $\ZZ$ placed in degree $-m$. The passage $\Ccal^k:=\Bcal_{-k}$ gives the corresponding cohomological  convention: $\Ccal[m]_k=\Ccal_{m+k}$.}). The summand  associated with  $r=0$ (for which  $N_0=N$) then gives  the subcomplex $\Cscr_N(U)_\pt$. For reasons which become clear, it makes good sense to switch to the bar notation insofar tensor products of the $\uHcal$ factors are involved:
\[
\Cscr_N(U)_\pt=\oplus_{r\ge 0}\oplus_{(N_0, N_1, \dots ,N_r)}\; \Hcal_{N_r}[1]\, \big|\, \cdots  \, \big|\, \Hcal_{N_1}[1]\otimes\Cscr_{N_0}(U_o)_\pt,
\]
or in  functorial terms:
\[
\underline{\Cscr} (U)_\pt=  \cdots \, \big|\, \uHcal [1]\, \big|\, \uHcal [1] \, \big|\, \uHcal [1]\otimes\underline{\Cscr}(U_o)_\pt.
\]

Since $\Hcal_{K}$ comes with the basis 
$\{\tri^{\Kbold}\}$, indexed by the total orders on $K$, we can   also write this as 
\[
\Cscr_N(U)_\pt=\oplus_{(N_0, \Nbold_1,\Nbold_2 \dots ,)}\; \big[\cdots  \, |\, \tri^{\Nbold_2}[1] \, |\, \tri^{\Nbold_1}[1]\big]\otimes\Ccal_{N_0}(U_o)_\pt,
\]
where the index set of the summation is implied by the preceding (and use the customary bracket notation for `barred' arguments). 

We  shall  see that  the  boundary map turns this complex (up to signs) into a  `reduced bar complex' in which the role of the differential graded algebra is taken by
the monoidal category $\uHcal$ (with zero differential) and $\uHcal(U_o)$ is considered a $\uHcal$-module via $att$.
This boundary map involves  K\"unneth products, which,  because of  the imposed shifts,  are now all of degree $-1$:
\begin{gather}
\times: \Hcal_{K''} [1] \, \big|\, \Hcal_{K'}[1]\to \Hcal_{K} [1], \label{eqn:times}\\
\times_{\zeta}: \Hcal_{K''}[1]\otimes \Ccal_{K'}(U_o)_\pt\to \Ccal_{K}(U_o)_\pt, \quad \xi\otimes \psi\mapsto \tri^{K''}_{U_o}(\xi)\times\psi, \label{eqn:timeszeta}  
\end{gather}
where in either case $K:=K'\sqcup K''$. The signs in the Eilenberg-Zilber formula of Lemma \ref{lemma:EZformula} 
$\tri^\Ibold\times \tri^\Jbold=\sum_\Kbold \sign(\Ibold\Jbold,\Kbold) \hspace{2pt} \tri^\Kbold$ 
come from the sign of permutations:  a shuffle of  $(x_1,\dots , x_k)$ and $(x_{k+1},\dots , x_{k+l})$ defines a linear transformation of $\RR^{k+l}$
and the determinant of this permutation accounts for the sign that appears here. Here we shuffle points of $\HH_+$ with the same imaginary part 
and so this does not affect the  Eilenberg-Zilber formula: it remains valid  in the context of \eqref{eqn:times} and \eqref{eqn:timeszeta}.
This  gives for $t=1,2,\dots$ maps
\begin{gather*}
\times^{(t)}:\Hcal_{N_r}[1]\, |\cdots |\, \Hcal_{N_1}[1]\to \Hcal_{N_r}[1]\, |\cdots |\,\Hcal_{N_{t+1}\sqcup N_t}[1]\, |\cdots |\, \Hcal_{N_1}[1],\\
\times^{(0)}_{\zeta}: \big[\Hcal_{\Nbold_r}[1]\, |\cdots |\, \Hcal_{\Nbold_1}[1]\big]\otimes  \Cscr_{N_0}(U_o)_\pt\to\big[\Hcal_{\Nbold_r}[1]\, |\cdots |\, \Hcal_{\Nbold_2}[1]\big]\otimes  \Cscr_{N_1\sqcup N_0}(U_o)_\pt
\end{gather*}
given  by  the K\"unneth products  \eqref{eqn:times} resp.\ \eqref{eqn:timeszeta}.
What changes however, are the signs when we regard these maps as components of a boundary operator, for we then must invoke the 
the Koszul rule. This  gives the  following proposition.

\begin{proposition}\label{prop:barcomplex}
The boundary map of the complex $\Cscr_N(\HH_+)_\pt$ (whose homology in degree $k$ is 
$H^{cl}_k(\conf_N(\HH_+))$ is equal to 
\[
\textstyle \p_+:= \sum_{t\ge 1} (-1)^{(1+n_r)+\cdots +(1+n_{t+2})}\times^{(t)}
\]
and the  boundary map  $\p$ of the complex $\Cscr_N(U)_\pt$ (whose homology in degree $k$ is $H^{cl}_k(\conf_N(U)$) is equal to 
\[
\textstyle \p:= \p_+\otimes 1+\times^{(0)}_{\zeta}=\sum_{t\ge 1}  (-1)^{(1+n_r)+\cdots +(1+n_{t+2})}\times^{(t)}\otimes 1+(-1)^{n-n_1+r-1}\times^{(0)}_{\zeta}
\]
\hfill $\square$
\end{proposition}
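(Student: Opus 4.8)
The boundary operator is the cellular differential of the relative complex $\Cscr_N(U)_\pt=\Cscr_\pt(U^*{}^N,D_N(U))$, so any codimension-one face of a cell that falls into $D_N(U)$ contributes zero. The plan is to fix a generating cell, indexed by a datum $(N_0,\Nbold_1,\dots,\Nbold_r)$, read off its faces from the lexicographic parametrisation of the $2$-cell $e\cong\HH_+$, and keep only the surviving ones. Such a cell is the product of a chosen cell of $\Cscr_{N_0}(U_o)_\pt$ with the $e$-stratum on $K:=N\ssm N_0$; by the two-level lexicographic structure the latter is the product of an open $r$-simplex $\{0<y_1<\cdots<y_r<\infty\}$ recording the distinct imaginary parts, together with the real-part simplices $\tri^{\Nbold_1},\dots,\tri^{\Nbold_r}$, one for each imaginary level.

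First I would classify the codimension-one faces. Collapsing two consecutive entries of a real-part simplex $\tri^{\Nbold_i}$ makes two points of the same level coincide, and pushing a real coordinate to $\pm\infty$ sends a point to $\infty=*$; in both cases the face lies in $D_N(U)$ and drops out. The top imaginary face $y_r\to\infty$ likewise sends the highest level to $*$ and dies. The faces internal to the $U_o$-cell simply reproduce the differential of $\Cscr_{N_0}(U_o)_\pt$, which is the evident tensor-complex term (recall $\uHcal$ itself carries no differential). Two families of genuinely new faces remain, both coming from the imaginary simplex. An interior collapse $y_t=y_{t+1}$ merges the levels $N_{t+1}$ and $N_t$ onto a single copy of $\RR$, where the two real simplices must be reshuffled; since this is precisely a product of open simplices on a common line, Lemma \ref{lemma:EZformula} identifies the resulting class with the shuffle sum, i.e.\ with the K\"unneth product $\times^{(t)}$. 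The bottom face $y_1\to0$ pushes the lowest level onto the boundary $\PP^1(\RR)$ of $e$: for $U=\HH_+$ alone this circle is collapsed to $*$, so the face dies and the differential reduces to $\p_+=\sum_{t\ge1}\pm\,\times^{(t)}$; for the attached space the map $\att$ instead carries this level into $U_o^*$ along the loop $\zeta$, producing $\tri^{\Nbold_1}_{U_o}(\zeta)\times(\text{remaining factors})$, which is exactly $\times^{(0)}_\zeta$. This yields the two summands $\p_+\otimes1$ and $\times^{(0)}_\zeta$.

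It remains to determine the signs, which I expect to be the main obstacle. Having oriented each cell through its total orders as in the construction of $\Cscr_N(U)_\pt$, I would run the faces through the Leibniz rule, combining the simplicial boundary signs of the imaginary simplex $\{0<y_1<\cdots<y_r<\infty\}$ with the Koszul signs forced by the shifts $\Hcal_{N_i}[1]$. Each K\"unneth product is a map of degree $-1$, so commuting the operation at an interior level $t$ past the factors above it contributes $(-1)^{(1+n_r)+\cdots+(1+n_{t+2})}$, while commuting the bottom operation past $\Hcal_{N_r}[1],\dots,\Hcal_{N_2}[1]$ produces the exponent appearing in $\times^{(0)}_\zeta$. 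The cleanest way to organise this is to recognise the outcome as the differential of a reduced bar complex in which the monoidal structure of $\uHcal$ plays the role of the algebra product (with zero internal differential) and $\uHcal(U_o)$ is the module determined by $\att$, equivalently by $\zeta$; the proposition then reduces to matching the cellular signs with the standard bar-differential signs. The subtlest point, and the one I would check most carefully, is the precise exponent $n-n_1+r-1$ in the $\times^{(0)}_\zeta$ term, where an internal sign incurred when the shift $[1]$ is absorbed into the $U_o$-factor corrects the naive Koszul count.
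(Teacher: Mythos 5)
Your argument is correct and is essentially the paper's own: the proposition is stated with no separate proof because the preceding discussion in the section already supplies it, namely the two-level lexicographic cell structure on $\conf_K(e)\subset\HH_+^K$, the observation that the only surviving codimension-one faces are the interior level merges (identified with the shuffle products $\times^{(t)}$ via Lemma \ref{lemma:EZformula}) and the bottom level carried into $U_o^*$ along $\zeta$ (giving $\times^{(0)}_{\zeta}$), together with the Koszul signs forced by the degree $-1$ shifts $[1]$. Your face classification and sign bookkeeping reconstruct exactly that reasoning, so there is nothing to add.
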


Noteworthy is the case  of degree $n$, as it gives  a presentation of $\Hcal_N(U)$ via the exact sequence
\begin{equation}\label{eqn:degree_n_pres}
\oplus_{\emptyset\not=I\subset N}\Hcal_I[1]\otimes \Cscr_{N\ssm I}(U_o)\xrightarrow{\times_\zeta} \Cscr_{N}(U_o)\to \Hcal_N(U)\to 0.
\end{equation}

Iterated application of this proposition gives us control on the total closed support  homology of the configuration spaces in case $U^*$ is a $2$-complex. Since  the functor $\uHcal_U$ depends only on  the $2$-skeleton of $U$, this gives us a recipe to compute that functor in the general case.

\begin{corollary}\label{cor:zeta-generation}
Assume $\dim U_o=1$,  so that $U_o^*$ is the  $1$-skeleton of $U$. If  a homotopy class $f_o\in \Htp(U_o,*)$ fixes the element  $\zeta\in \pi_{U_o}$ defined by the attaching map,  then $f_o$ extends to a  $f\in \Htp(U,*)$ whose action on  $H^{cl}_k(\conf_N(U))$ is  through its action  on $\Ical_{U}|_{n-k}$.
\end{corollary}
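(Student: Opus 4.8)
The plan is to first produce the extension $f$ and then to compute its action through the bar complex of Proposition \ref{prop:barcomplex}. For the extension, recall that $U^*$ is $U_o^*$ with a single $2$-cell attached along a map $\att$ representing $\zeta\in\pi_{U_o}$. Since $f_o$ fixes $\zeta$, the loops $\att$ and $f_o\circ\att$ are based-homotopic in $U_o^*$; choosing such a homotopy lets me extend $f_o$ over the $2$-cell, and because this boundary reparametrization is homotopic to the identity rel $*$, I may arrange that the resulting $f\in\Htp(U^*,*)$ restricts to $f_o$ on $U_o^*$ and induces the identity on the $\uHcal(\RR)$-data coming from that cell. That $f$ is a homotopy equivalence then follows from the gluing lemma along the cofibration $U_o^*\hookrightarrow U^*$, using that $f_o$ is one; concretely a homotopy inverse of $f_o$ that still fixes $\zeta$ extends to a homotopy inverse of $f$.

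Next I would exploit that, because $\dim U_o=1$, each $\Cscr_{N_0}(U_o)_\pt$ is concentrated in degree $n_0=|N_0|$, where it equals $\Hcal_{N_0}(U_o)$. Hence the bar complex of Proposition \ref{prop:barcomplex} is assembled functorially out of $\uHcal(\RR)$ (for the cell) and the $\uHcal$-module $\uHcal(U_o)$ (via $\zeta$), both invariant under pointed homotopy. By this functoriality the topologically defined $f_*$ is computed by the self-map of the bar complex that is the identity on every $\uHcal(\RR)$-factor and $\uHcal(f_o)$ on the $\uHcal(U_o)$-factor. The summand indexed by $(N_0,N_1,\dots,N_r)$ sits in homological degree $n+r$, so the degree-$(n+k)$ chains are exactly the summands with $r=k$; for these $n_0\le n-k$ since the $N_i$ with $i\ge1$ are nonempty. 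By Theorem \ref{thm:generation} applied to $U_o$, the map $f_o$ acts on each $\Hcal_{N_0}(U_o)$ through $\Ical_{U_o}|_{n_0}$, hence through $\Ical_{U_o}|_{n-k}$; and as $H^{cl}_{n+k}$ is a subquotient of these chains, its $f$-action is controlled by the $f_o$-action on $\Ical_{U_o}|_{n-k}$.

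It remains to pass from $\Ical_{U_o}$ to $\Ical_U$, and this is where the $\zeta$-part of the differential is essential. The kernel of $\Ical_{U_o}|_m\twoheadrightarrow\Ical_U|_m$ is the image of the two-sided ideal generated by $\zeta-1$, which is precisely the material that the maps $\times_\zeta$ convert into boundaries; in the lowest degree this is already visible in the presentation \eqref{eqn:degree_n_pres}, realizing $\Hcal_N(U)$ as $\Hcal_N(U_o)$ modulo exactly this $\zeta$-ideal. Consequently, if two such extensions induce the same map on $\Ical_U|_{n-k}$, their difference on the degree-$(n+k)$ chains takes values in the $\zeta$-ideal and hence in boundaries, so they agree on $H^{cl}_{n+k}$; equivalently the action factors through $f$'s action on $\Ical_U|_{n-k}$, which is the asserted conclusion (indexing the homology by its excess $k$ over the bottom degree $n$, as in Theorem \ref{thm:generation1}). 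I expect the main obstacle to be exactly this descent in the higher degrees $k>0$: one must check, uniformly along the bar filtration rather than only through the degree-$n$ presentation, that the $\zeta$-ideal contributions to $\uHcal(f_o)$ are killed on homology. I would carry this out by comparing $f$ against a competing extension $f'$ on the bar complex and constructing the needed chain nullhomotopy out of the module structure $\times_\zeta$ itself, using crucially that $f_o$ fixes $\zeta$ so that this comparison is compatible with the differential.
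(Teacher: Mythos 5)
Your proposal follows essentially the same route as the paper: extend $f_o$ over the attached $2$-cell via a based homotopy from $\att$ to $f_o\circ\att$, observe that on the bar complex of Proposition \ref{prop:barcomplex} the induced chain map is the identity on the $\uHcal(\RR)$-factors and $\uHcal(f_o)$ on the $\Hcal_{N_0}(U_o)$-factor, carry out the same degree bookkeeping ($r=k$, hence $n_0\le n-k$), and descend from $\Ical_{U_o}$ to $\Ical_U$ by noting that the $\zeta$-ideal contributions are absorbed by the $\times_\zeta$ part of the differential. The one step you flag as still needing execution---that for $k>0$ the $\zeta$-ideal terms die on homology---is precisely the step the paper compresses into its parenthetical remark that $\tri^\Ibold$ sends the image of $\zeta-1$ to a boundary in $\Cscr_N(U)_\pt$, so your plan matches the published argument.
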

\begin{proof}
In this case, $\Cscr_K(U_o)_\pt$ is a single term complex concentrated in degree $|K|$ and then equal to $\Hcal_K(U_o)$.

Since $f_o$ fixes $\zeta$, it can be represented by a continuous map $F_o:U_o\to U_o$ together with a homotopy between $\att$ and $F_o\circ att$. 
Perform this homotopy (relative to the base point) on the closure of the strip  $0<\im(z)\le 1$ of $\HH_+$ in $U$ and extend it by the map $z\mapsto z-\sqrt{-1}$ on $\im(z)\ge 1$ to obtain the cellular map $F: U\to U$. 
By construction, the map $F$  preserves the total order we imposed on $\HH_+$. These conditions are precisely what we need so that $F$ acts cellularly on the pair  $(U^{*N}, D_N(U))$.  It then acts functorially on  $\Cscr_N(U)_\pt$ in a way that respects the monodial structure. On the subcomplex $\Cscr_{N_0}(U)_\pt=\Hcal_{N_0}(U_o)$ the action is via $f_o$. 
The action of $F$ on the other  tensor factor $\Hcal_{N_i}[1]$ (which defines   in general not a subcomplex) is trivial: this group  coincides with 
$\Cscr_N(\HH_+)_\pt$ on which $F$ acts through the quotient $U/U_o\cong (\HH_+)^*$, but on this quotient 
$F$ is homotopic to the identity via the lexicographic maps $H_t:z\in \HH_+^*\mapsto F(z+\sqrt{-1}t)\in \HH_+^*$. It is clear that the induced action of $F$ on the  homology $H^{cl}_k(\conf_N(U))$ is that of $f$. 

The  action  of $f_o$ on $\Hcal_{N_0}(U_o)$ is through its action on  $\Ical_{U_o}|_{n_0}$ as a ring automorphism which  fixes the image of $\zeta$. Recall that this invokes (via Theorem \ref{thm:generation}) the K\"unneth products of  $f_o$-equivariant maps 
$\tri^\Ibold: \Ical_{U_o}|_i\to \Hcal_I(U_o)$ with $I\subset N_0$ and $i=|I|$.  
Hence  the same is true for its action  $\Cscr_{N}(U)_{n+k}$,  where we must take   $i=|I|\le n-k$. 

On the other hand,  we know that $f_o\in \Htp(U_o,*)$ acts on $H^{cl}_{n+k}(\conf_N(U))$ through its image 
$f\in \Htp(U,*)$. It is  geometrically  clear how: the maps $\tri^\Ibold: \Ical_{U_o}|_i\to \Hcal_I(U_o)$ involved in $\Cscr_{N}(U)_{n+k}$,  will after post composition  with the natural map  $\Hcal_I(U_o)\to \Hcal_I(U)$ factor through $\tri^\Ibold: \Ical_{U}|_i\to \Hcal_I(U)$. (More formally,  the image of  
of $\zeta-1 $ in  $\Ical_{U_o}|_i$ is mapped by $\tri^\Ibold$ to a  boundary in  the complex $\Cscr_{N}(U)_\pt$.)
This proves that $f$ acts on $H^{cl}_{n+k}(\conf_N(U))$ via its action on $\Ical_{U}|_{n-k}$.
\end{proof}

\subsection{Attaching several $2$-cells}
Suppose that $U^*$ is a finite $2$-complex, so obtained from its $1$-skeleton $U_1^*$ by attaching the $2$-cells $e_1,\ldots, e_m$ along the words 
$\zeta_1,\ldots, \zeta_m\in \pi_{U_1}$, respectively. Then the relative cellular decomposition of the pair $(U^{*N}, D_N(U))$ 
has lowest cells in degree $n$. These are precisely those in $\conf_N({U_1})$ and so $\Cscr_N(U)_n\cong \Hcal_N(U_1)$ as before. 
In the next dimension, the discussion on lexicographic stratification tells us that each $(n+1)$-cell of 
$\conf_N(U)$ involves exactly one of the $2$-cells $e_1,\ldots, e_m$; it is, in fact, a product of a smallest cell of $\conf_I(e_i)$ and a cell of $\conf_{N\ssm I}(U_1)$ for some  nonempty subset $I\subset N$. These are exactly the types of cells involving `one bar' in the bar complex of Section \ref{subsec:celldecomposition}, except we now have $m$ choices of the cell containing this bar. Our discussion on differentials above then repeats verbatim to this case too, giving the analogue of exact sequence \eqref{eqn:degree_n_pres}:
\begin{equation}
    \oplus_{j=1}^m\oplus_{\emptyset\not=I\subset N}\Hcal_I[1]\otimes \Cscr_{N\ssm I}(U_1)\xrightarrow{\oplus_{j=1}^m\times_{\zeta_j}} \Cscr_{N}(U_1)\to \Hcal_N(U)\to 0.
\end{equation}
If we view the words $\zeta_j$ as maps from $(\PP^1(\RR), \infty)\to (U_1^*,*)$, this can be categorically summarised as follows.

\begin{theorem}\label{thm:kernelmultiplecells}
The kernel  of the surjective $\FB$-morphism $\uHcal(U_1)\to \uHcal(U)$ is the $\FB$-ideal $\underline{\Kcal}(U,U_1)$ generated by the images of the morphisms $\uHcal(\zeta_j):\uHcal\to \uHcal(U_1)$ for $j=1,\ldots, m$ evaluated on all non-empty sets. Concretely, 
${\Kcal}_n(U,U_1)\subset \Hcal_n(U_1)$ is spanned by the   $\Scal_n$-orbit of $\sum_{j=1}^m\sum_{k=1}^n\tri^k_{U_1}(\zeta_j)\times \Hcal_{n-k}(U_1).$
\end{theorem}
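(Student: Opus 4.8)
The plan is to read everything off the degree-$n$ exact sequence displayed just above the theorem, the multi-cell analogue of \eqref{eqn:degree_n_pres}. Since $\dim U_1=1$, the complex $\Cscr_N(U_1)_\pt$ is concentrated in degree $n$, where it equals $\Hcal_N(U_1)$, so that sequence presents $\Hcal_N(U)$ as the cokernel
\[
\oplus_{j=1}^m\oplus_{\emptyset\neq I\subset N}\Hcal_I[1]\otimes\Hcal_{N\ssm I}(U_1)\xrightarrow{\ \oplus_j\times_{\zeta_j}\ }\Hcal_N(U_1)\to\Hcal_N(U)\to 0.
\]
In particular the $\FB$-morphism $\uHcal(U_1)\to\uHcal(U)$ is surjective, and by exactness its kernel in degree $n$ is precisely the image of $\oplus_j\times_{\zeta_j}$. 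The whole theorem thus reduces to identifying this image.

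First I would unwind the image using the definition \eqref{eqn:timeszeta} of $\times_{\zeta_j}$: a basis vector $\tri^{\Ibold}\otimes\psi$ of $\Hcal_I[1]\otimes\Hcal_{N\ssm I}(U_1)$ is sent to $\tri^{\Ibold}_{U_1}(\zeta_j)\times\psi$, so the image is spanned by the elements $\tri^{\Ibold}_{U_1}(\zeta_j)\times\psi$ as $j$ runs over $1,\dots,m$, $I$ over the nonempty subsets of $N$, $\Ibold$ over the total orders on $I$, and $\psi$ over $\Hcal_{N\ssm I}(U_1)$. Relabeling by the $\Scal_n$-action carries each standard piece $\tri^k_{U_1}(\zeta_j)\times\Hcal_{n-k}(U_1)$ (the case $I=\{1,\dots,k\}$ with its standard order) onto every $k$-subset with every induced order, so this span equals the $\Scal_n$-orbit of $\sum_{j=1}^m\sum_{k=1}^n\tri^k_{U_1}(\zeta_j)\times\Hcal_{n-k}(U_1)$. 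This is the asserted concrete description of $\Kcal_n(U,U_1)$.

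It then remains to recognize this span as the $\FB$-ideal generated by the $\uHcal(\zeta_j)$. By construction it is $\Scal_n$-stable, hence an $\FB$-submodule, and it is closed under K\"unneth multiplication by an arbitrary $\chi\in\uHcal(U_1)$ since $(\tri^{\Ibold}_{U_1}(\zeta_j)\times\psi)\times\chi=\tri^{\Ibold}_{U_1}(\zeta_j)\times(\psi\times\chi)$ is again of the listed form; thus it is an $\FB$-ideal. Taking $I=N$ and $\psi=1\in\Hcal_\emptyset$ shows that it contains the image of each $\uHcal(\zeta_j)$ on every nonempty set, while any ideal containing those images must contain all of the products above, so the span is exactly $\underline{\Kcal}(U,U_1)$. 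The genuine content has already been spent in Proposition \ref{prop:barcomplex} and in the claim that its differential repeats verbatim for several $2$-cells; granting that, the only step needing care here is bookkeeping, namely checking that the closure operations packaged into the phrase ``$\FB$-ideal generated by $\uHcal(\zeta_j)$'' --- relabeling under $\FB$ together with monoidal multiplication --- are matched exactly by the $\Scal_n$-orbit and the free $\Hcal_{N\ssm I}(U_1)$-factor appearing in the image of $\oplus_j\times_{\zeta_j}$, where graded-commutativity of the K\"unneth product guarantees that one-sided multiplication already yields a two-sided ideal.
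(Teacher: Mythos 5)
Your proposal is correct and follows essentially the same route as the paper: the theorem is stated there as the ``categorical summary'' of the displayed multi-cell analogue of \eqref{eqn:degree_n_pres}, whose cokernel presentation of $\Hcal_N(U)$ (using that $\Cscr_N(U_1)_\pt$ is concentrated in degree $n$) immediately identifies the kernel as the image of $\oplus_j\times_{\zeta_j}$. Your unwinding of that image into the $\Scal_n$-orbit description and the $\FB$-ideal closure is exactly the bookkeeping the paper leaves implicit.
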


This has an immediate implication on a group $G$ presented by a homomorphism  $\zeta: F_m\to F_n$ of free groups. Note that $\underline{\pi\Hcal_N(F_n)}\cong \uHcal(\RR)^{\otimes n}$. Let $\zeta_j\in F_n$ denote the image of the $j$th generator of $F_m$.

\begin{corollary}\label{cor:kernelmultiplecells-groups}
The  functor $\underline{\pi\Hcal(G)}$ is the quotient of  $\underline{\pi\Hcal}(F_n)$ by the $\FB$-ideal $\underline{\Kcal}(\zeta)$ generated by the image of  $\uHcal(\zeta): \uHcal(F_m)\to \uHcal(F_n)$ evaluated on all non-empty sets. Concretely, 
${\Kcal}_n(\zeta)$ is spanned by the   $\Scal_n$-orbit of $\sum_{j=1}^m\sum_{k=1}^n\tri^k(\zeta_j)\times 
\pi\Hcal_{n-k}(F_n)$.
\end{corollary}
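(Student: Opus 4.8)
The plan is to read off this corollary from Theorem~\ref{thm:kernelmultiplecells} after realizing the algebraic datum $\zeta: F_m\to F_n$ by an actual space. First I would set $U_1:=[n]\times\RR$, the disjoint union of $n$ copies of the real line, so that $U_1^*$ is a wedge of $n$ circles with the added point $*$ as wedge point and $\pi_{U_1}=F_n$. Each $\zeta_j\in F_n$ is represented by a pointed map $(\PP^1(\RR),\infty)\to(U_1^*,*)$, and attaching $m$ two-cells along $\zeta_1,\dots,\zeta_m$ (as in Section~\ref{subsec:attaching2cell}) produces a space $U$ whose one-point compactification $U^*$ is a finite $2$-complex with a single $0$-cell and with $\pi_U=F_n/\lla\zeta_1,\dots,\zeta_m\rra\cong G$ by van Kampen. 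This $U$ is precisely an instance of the set-up of Theorem~\ref{thm:kernelmultiplecells}, with $U_1$ as its $1$-skeleton.

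By Corollary~\ref{cor:fundgrpdependence} the functor $\uHcal$ factors through $\underline{\pi\Hcal}$, giving equivalences $\uHcal(U_1)\sim\underline{\pi\Hcal}(F_n)$ and $\uHcal(U)\sim\underline{\pi\Hcal}(G)$; since $\underline{\pi\Hcal}$ carries free products to tensor products and $F_n$ is the $n$-fold free product of $\ZZ=\pi_\RR$, this recovers the identification $\underline{\pi\Hcal}(F_n)\cong\uHcal(\RR)^{\otimes n}$ noted just before the statement. The inclusion $U_1\subset U$ is a morphism in $\underline{\Hcal\! pt}^*$ inducing the quotient $F_n\twoheadrightarrow G$ on fundamental groups, so by the naturality of this factorization the surjection $\uHcal(U_1)\to\uHcal(U)$ is identified with the surjection $\underline{\pi\Hcal}(F_n)\to\underline{\pi\Hcal}(G)$ induced by $F_n\twoheadrightarrow G$.

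It then remains to transport the conclusion of Theorem~\ref{thm:kernelmultiplecells} across these identifications. That theorem describes the kernel of $\uHcal(U_1)\to\uHcal(U)$ as the $\FB$-ideal $\underline{\Kcal}(U,U_1)$ generated by the images of $\uHcal(\zeta_j)$, with $\Kcal_n(U,U_1)$ spanned by the $\Scal_n$-orbit of $\sum_{j=1}^m\sum_{k=1}^n\tri^k_{U_1}(\zeta_j)\times\Hcal_{n-k}(U_1)$. Under $\uHcal(U_1)\cong\underline{\pi\Hcal}(F_n)$ the term $\tri^k_{U_1}(\zeta_j)$ becomes $\tri^k(\zeta_j)$ and $\Hcal_{n-k}(U_1)$ becomes $\pi\Hcal_{n-k}(F_n)$, so $\underline{\Kcal}(U,U_1)$ becomes exactly the ideal $\underline{\Kcal}(\zeta)$ of the statement, and $\underline{\pi\Hcal}(G)$ is its quotient.

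The only point demanding genuine care is the compatibility invoked in the second paragraph: one must verify that the topologically defined natural transformation $\uHcal(\zeta_j):\uHcal(\RR)\Rightarrow\uHcal(U_1)$ appearing in Theorem~\ref{thm:kernelmultiplecells} corresponds, under the equivalence $\uHcal(U_1)\sim\underline{\pi\Hcal}(F_n)$, to the algebraically defined transformation determined by $\zeta_j\in F_n=\pi_{U_1}$. This is immediate from the very definition of $\tri^\Nbold_U(\gamma)=\Hcal_N(\gamma)(\tri^\Nbold)$, in which $\gamma$ is represented by a map $\PP^1(\RR)\to U^*$; once this is unwound, no computation beyond Theorem~\ref{thm:kernelmultiplecells} is required.
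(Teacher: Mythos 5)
Your proposal is correct and follows exactly the route the paper intends: the paper presents this corollary as an immediate consequence of Theorem \ref{thm:kernelmultiplecells}, obtained precisely by realizing the presentation $\zeta:F_m\to F_n$ topologically as a wedge of $n$ circles with $m$ attached $2$-cells and then transporting the kernel description through the factorization of Corollary \ref{cor:fundgrpdependence}. The compatibility point you flag at the end is indeed the only thing to check, and your resolution of it is the right one.
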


\subsection{Examples involving the  upper half plane} A simple way to  verify that a closed chain on a configuration space of $\HH_+$  is not a coboundary is by showing that its intersection product with some cycle of complementary dimension is nonzero. 
We can do this with the help of the `planetary system' model:  take for this cycle  a torus of the following type. Choose $\varepsilon\in (0,1)$ and $z_1\in \CC$ and let  $\Tcal_{z_1,\varepsilon}$ be the set of $z\in \CC^K$ with 
$|z_{i+1}-z_i|=\varepsilon^{i+1}$ for   $i=1,\dots k-1$. This is an oriented $(k-1)$-torus contained in 
$\conf_k(\CC)$.  If we take $\im (z_1)>\varepsilon$,  then this torus lies in  $\conf_k(\HH_+)$. Its   isotopy class does not depend  on $z_1$ and $\varepsilon$, so that we  get a well-defined homology class $\tau_{k-1}\in H_{k-1}(\conf_k(\CC))$. The $\Sfrak_k$-orbit of this class is  known to generate  $H_{k-1}(\conf_k(\CC))$ and hence a cocycle which vanishes on this orbit must be coboundary.
Here are a few simple, yet  basic  examples. 

\begin{examples}\label{examples:basic}

 (i)  Recall that  $[\tri^{(1,2,\dots, k)}]\in \Hcal_k[1]$  defines  in $\HH_+^k$  a $(k+1)$-chain with closed support; it is defined by   the locus   of $(z_1,\dots z_k)$ for which $\im(z_1)=\cdots= \im(z_k)>0$ and 
$\re (z_1)\le\cdots\le  \re(z_k)$. Its  boundary lies on the union of the the loci where $z_i=z_{i+1}$ for some $i$ and hence  is  a closed support cycle when restricted to $\conf_k(\HH_+)$. If we  think of this in cohomological terms (as `taking the transversal intersection product with this cycle') it then  defines a class in $H^{k-1}(\conf_k(\HH_+))$. Its intersection product with the torus $\Tcal_{z_1,\varepsilon}$ 
(with $\im (z_1)>\varepsilon$)  is transversal with 
 $(z_1, z_1+\varepsilon^2,\dots, z_1+\varepsilon^2+\cdots +\varepsilon^k)$  as its only point of intersection.
We thus  find that $[\tri^{(1,2,\dots, k)}]\cap \tau=1$.
 
 For any  permutation $\sigma\in \Sfrak_k$ we get another such  cohomology class
 $\sigma_*[\tri^{(1,2,\dots, k)}]=\sign(\sigma)[\tri^{(\sigma(1), \dots , \sigma(k))}]$.   It is well-known that these classes generate all of $H^{k-1}(\conf_k(\HH_+))$.

(ii)  The closed 3-chain  $[\tri^{(2)}|\tri^{(1)}]$ on $\conf_2(\HH_+)$ describes  the locus   of 
$(z_2,z_1)$ for which $\im(z_1)\le  \im(z_2)$. Its boundary in $\conf_2(\HH_+)$ is where 
$\im (z_1)=\im (z_2)$  and this has  two components according to whether  $\re(z_1)$ is smaller or greater that $\re(z_2)$. Hence
\[
\p_+[\tri^{(2)}|\tri^{(1)}]=[\tri^{(1,2)}]- [\tri^{(2,1)}]
\]
and so $[\tri^{(2,1)}]$ and $ [\tri^{(1,2)}]$ define the same class. The previous example shows that this class is nonzero.

(iii)  The chain $[\tri^{(3)}|\tri^{(1,2)}]$ describes a closed $5$-chain in $\conf_3(\HH_+)$, given as  the locus   of 
$(z_3,z_1, z_2)$ for which $\im(z_1)=\im (z_2)\le  \im(z_3)$ and  $\re(z_1)\le \re (z_2)$. The boundary of this $5$-chain meets  $\conf_3(\HH_+)$ in the locus  where 
$\im (z_2)=\im (z_3)$. It has three components corresponding to three shuffles: 
\begin{equation}\label{eqn:coboundary}
\p_+[\tri^{(3)}|\tri^{(1,2)}]=[\tri^{(3,1,2)}] -[\tri^{(1,3,2)}]+[\tri^{(1,2,3)}].
\end{equation}
Example (i) shows that  each term of the  right hand side of the identity \eqref{eqn:coboundary} represents a nonzero cohomology class in degree $2$ and hence  these three classes have zero sum. 
\end{examples}

\section{The surface case}
In this section $X$ is a closed connected  oriented surface whose genus we denote by $g$. We are also given a nonempty finite subset $P$ and take 
$U:=X\ssm P$. So $U^*$ can be regarded as the quotient of $X$ obtained by identifying  $P$ with a singleton. We number the elements of $P$ as $p_0,p_1,...,p_l$ (so $P$ has size $l+1\ge 1$). The mapping class group $\Mod(U)$ is the connected component group of the group of orientation preserving homeomorphisms  of $X$ which preserve $P$ (not necessarily pointwise). It acts on the  homotopy type  of $(U^*,*)$, giving a  group homomorphism $\Mod(U)\to \Htp(U^*,*)$. By Theorem \ref{thm:generation},  $\Mod(U)$ therefore acts on $\Hcal_N(U)=H_n^{cl}(\conf_N(U))\cong H^n(\conf_N(U))$ through $\Ical_U|_n$. So if we denote the kernel of the $\Mod(U)$-action on $\Hcal_N(U)$ by $J_\cfg^n\Mod (U)$, then 
by the theorem of Labute \cite{labute} mentioned in the introduction,  this is equivalent to saying that  $J_\cfg^n\Mod (U)$ contains the Johnson subgroup
$J^n\Mod (U)$.

\subsection{A cell structure on the  surface}\label{sec:celldecomp of X}
We begin with fixing an oriented  cell decomposition of $X$ that has the nonempty set $P$ as $0$-skeleton.
We let the $1$-cells be defined by arcs  $\beta_1, \dots, \beta_l$ in $X$ with $\beta_j$ connecting $p_{j-1}$ with $p_j$ and oriented loops $\{\alpha_{\pm i}\}_{i=1}^g$ based at $p_0$ ensuring that their relative interiors are pairwise disjoint. Their union,  which we denote  by $X_1\subset  X$,  is the $1$-skeleton of our CW structure and we assume that  these arcs  have been chosen and numbered in such a manner that  $X\ssm X_1$ is  a single $2$-cell $e$ and that $X$ is obtained by attaching a $2$-disk whose attaching map traverses the concatenation
\begin{equation}\label{eqn:concat}
(\alpha_1,\alpha_{-1})...(\alpha_g,\alpha_{-g})\beta_1\beta_2...\beta_l\beta_l^{-1}...\beta_2^{-1}\beta_1^{-1}.
\end{equation}
We assume that $e$ is parametrized by the upper half plane $\HH_+\subset \CC\subset \PP^1(\CC)$  as before, so that the attaching  map 
$att: \PP^1(\RR) \to X_1$  sends $\infty$ to $p_0$  and is, up to an orientation preserving reparametrization,  the  concatenation \eqref{eqn:concat}.
We denote the oriented $1$-cells defined by $\alpha_{\pm i}$ and  $\beta_j$ by   resp.\  $a_{\pm i}$ and $b_j$ and totally order them as 
$(a_1, a_{-1}, \dots, a_g, a_{-g}, b_1, \dots, b_l)$.

This cell decomposition  also determines one  of $U^*=X/P$ such that  $U_1^*:=X_1/P$ is its $1$-skeleton (a wedge of $2g+l$ circles). We shall  identify the positive dimensional cells of $X$ with those of $U^*$. This gives us a presentation of the fundamental groups 
$\pi_{U_1}$ and $\pi_U$. We will make no notational distinction between the paths $\alpha_1, \alpha_{-1}, \dots, \alpha_g,\alpha_{-g}, \beta_1, \dots, \beta_l$ and their images in $\pi_{U_1}$ and so the latter is freely  generated by these elements. The attaching map defines the commutator
\begin{equation}\label{eqn:defzeta}
\zeta:=(\alpha_1, \alpha_{-1})\cdots (\alpha_g, \alpha_{-g})\in \pi_{U_1}.
\end{equation}
It is clear that  $\pi_{U^*}$ can be identified with the quotient of $\pi_{U_1}$ by the normal subgroup $N\la\zeta\ra\vartriangleleft\pi_{U_1}$ generated by $\zeta$. 
 
\begin{lemma}\label{lemma:stabilizer_p_0}
The stabilizer {$\Mod(U)_{p_0}$} of $p_0$ in  $\Mod(U)$ contains $J^1\Mod(U)$ and each of its elements  lifts to an automorphism of  $\pi_{U_1}$ which fixes $\zeta$.
\end{lemma}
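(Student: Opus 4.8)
The statement has two parts and I would treat them separately. For the inclusion $J^1\Mod(U)\subseteq\Mod(U)_{p_0}$, I would exploit the $\Mod(U)$-equivariant short exact sequence arising from the long exact sequence of the pair $(X,P)$ together with the identification $H_1(U^*)\cong H_1(X/P)\cong H_1(X,P)$:
\[
0\to H_1(X)\to H_1(U^*)\to \widetilde H_0(P)\to 0,
\]
in which the quotient $\widetilde H_0(P)\cong\ZZ^l$ carries the reduced permutation action of $\Mod(U)$ on $P$. Since $\pi_U|_1$ is just the abelianization $H_1(U^*)$, the group $J^1\Mod(U)$ acts by definition trivially on $H_1(U^*)$, hence trivially on the quotient $\widetilde H_0(P)$. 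The action of $\Mod(U)$ on $\widetilde H_0(P)$ factors through the homomorphism $\Mod(U)\to\Sfrak_P$ recording the permutation of the punctures, and the standard representation of $\Sfrak_P$ on $\widetilde H_0(P)$ is faithful (for $l\ge 1$; the case $l=0$ is vacuous). Therefore every element of $J^1\Mod(U)$ induces the trivial permutation of $P$, and in particular fixes $p_0$.

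For the lifting statement, let $\phi\in\Mod(U)_{p_0}$ be represented by an orientation-preserving homeomorphism $\Phi$ of $X$ with $\Phi(P)=P$ and $\Phi(p_0)=p_0$. I would first pass to a surface with boundary: choose a closed disk $D$ in the interior of the $2$-cell $e$ and set $\Sigma:=X\ssm\operatorname{int}(D)$, a compact genus-$g$ surface with one boundary circle $\partial D$ containing $P$ in its interior. The annulus $\overline e\ssm\operatorname{int}(D)$ deformation retracts onto $\partial e\subset X_1$, so $\Sigma$ retracts onto $X_1$ compatibly with $P$; hence $\Sigma/P\simeq X_1/P=U_1^*$ and $\pi_1(\Sigma/P,*)\cong\pi_{U_1}$, while capping the boundary back recovers $X/P=U^*$ and exhibits $\pi_U=\pi_{U_1}/N\la\zeta\ra$ with $\zeta=[\partial e]$. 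After an isotopy supported away from $P\ssm\{p_0\}$ I would arrange that $\Phi$ fixes $D$ pointwise; then $\Phi$ restricts to a homeomorphism of $\Sigma$ fixing $\partial D$, descends to $\bar\Phi\colon\Sigma/P\to\Sigma/P$ fixing $*$, and the resulting $\tilde\phi:=\bar\Phi_*\in\Aut(\pi_{U_1})$ covers $\phi_*$ on $\pi_U$ by naturality of the quotient $\pi_{U_1}\to\pi_U$.

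The remaining, and genuinely delicate, point is to upgrade this to $\tilde\phi(\zeta)=\zeta$ on the nose rather than merely up to conjugacy: fixing the boundary circle pointwise only guarantees that the free homotopy class of $\partial D$ is preserved, which pins down $\zeta$ only in its conjugacy class. To obtain equality I would strengthen the normalisation so that $\Phi$ additionally fixes pointwise a chosen arc $\delta$ joining $p_0$ to a point of $\partial D$ inside $\overline e$; concretely, one fixes pointwise a regular-neighbourhood disk of $\overline D\cup\delta$, which is possible rel $P\ssm\{p_0\}$ because $\Phi$ already fixes $p_0$ and is orientation-preserving. Since $\partial e$ is based-homotopic to $\delta\,(\partial D)\,\delta^{-1}$ through the annulus, $\Phi$ then fixes pointwise a based loop representing $\zeta$, so $\tilde\phi(\zeta)=\zeta$ exactly. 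I expect this basepoint bookkeeping---realising $\zeta$ by a loop lying in the fixed locus of $\Phi$, so that the induced automorphism fixes the element itself and not just its conjugacy class---to be the main obstacle; the deformation-retract and homotopy-equivalence identifications, and the isotopy normalising $\Phi$ near $p_0$, are routine once this is arranged.
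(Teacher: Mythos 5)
Your proof is correct and follows essentially the same route as the paper: the first part via the boundary map $H_1(X,P)\to\tilde H_0(P)$, and the second by normalising the homeomorphism to be the identity on a disk inside the $2$-cell $e$. The only difference is that the paper takes that disk to be the closure of $\{\im z>1\}$, so that $p_0$ lies on its boundary and $\partial D$ is already a \emph{based} loop representing $\zeta$; this makes your auxiliary arc $\delta$, and the conjugacy-versus-equality bookkeeping it handles, unnecessary.
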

\begin{proof}
The boundary map of the homology sequence of the pair $(X,P)$ defines a surjection $H_1(X,P)\to \tilde H_0(P)$ and hence the action of $\Mod(U)$ on $P$ factors through its action on $H_1(X,P)=H_1^{cl}(U)$. In particular, the 
$\Mod(U)$-stabilizer of $p_0$ contains $J^1\Mod(U)$.

The shifted upper half plane defined by $\im (z)>1$ defines an open disk $\mathring D$ in $e$. Its  closure $D$ in $X$ is a closed disk 
with  $D\cap P=\{p_0\}\subset \p D$. Contraction of $D$ in $X$ gives a copy of the pair $(X,P)$ and from this it is easily seen that any  orientation preserving self-homeomorphism of $X$ which preserves both $P$ and $p_o$ is isotopic relative the pair $(P,p_0)$ to one which is the identity on  all of  $D$.
If  we identify $\mathring D$ with its image  in $U^*$, then its complement admits $U_1^*$ as a deformation retract and  $\p D$ , when traversed positively,  represents  $\zeta\in \pi_{U_1}$.  This shows that every element of $\Mod(U)_{p_0}$ is realized by an automorphism of $\pi_{U_1}$ which fixes $\zeta$.
\end{proof}

 In general the image of $\Mod(U)$ in  $\aut (\pi_{U^*})$ is a proper subgroup, as it will also preserve the collection of conjugacy classes of the 
arcs that connect two distinct points of $P$ (such an arc gives a loop in $U^*$).  As mentioned in the introduction, the following corollary is due to
Bianchi-Miller-Wilson in the case $P$ is a singleton.

\begin{corollary}\label{cor:johnsonkernel}
The Johnson group $J^k\Mod(U)$ acts trivially  on $H^k(\conf_N(U))$ for all $k$.
\end{corollary}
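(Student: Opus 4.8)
The plan is to reduce the closed-surface statement to the punctured-surface results already established, using the cell structure from Section \ref{sec:celldecomp of X} together with Theorem \ref{thm:generation1} (the single-$2$-cell case). First I would observe that $U^*=X/P$ is a $2$-complex with exactly one $2$-cell $e$, attached along the word $\zeta$ of \eqref{eqn:defzeta}, so $\pi_U$ is the one-relator group $\pi_{U_1}/N\la\zeta\ra$ with $\zeta$ the surface relator $(\alpha_1,\alpha_{-1})\cdots(\alpha_g,\alpha_{-g})$. Since this relator is primitive (the surface-group relator is not a proper power in the free group $\pi_{U_1}$), Labute's theorem \cite{labute} applies and gives $(\Ical_U^{n+1}+1)\cap\pi_U=\pi_U^{(n+1)}$; hence the kernel of the $\Htp(U^*,*)$-action on $\pi_U|_n$ coincides with the kernel of its action on $\Ical_U|_n$. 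This is exactly the identification of $J^k\Mod(U)$ (the $k$th Johnson group) with $J^k_\cfg\Mod(U)$ discussed at the start of Section 3.

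Next I would invoke Theorem \ref{thm:generation1}: because $U^*$ has a single $2$-cell, $\Htp(U^*,*)$ acts on $H^{cl}_{n+k}(\conf_N(U))$ through its action on $\Lambda_U|_{n-k}$. Setting $k$ so that the relevant cohomological degree is the one in the statement, I use Poincar\'e--Lefschetz duality $H^k(\conf_N(U))\cong H^{cl}_{2n-k}(\conf_N(U))$ (valid since $U$ is an oriented open surface of finite type). Writing $2n-k=n+(n-k)$, Theorem \ref{thm:generation1} tells us that the action on $H^{cl}_{2n-k}(\conf_N(U))$ factors through $\Lambda_U|_{n-(n-k)}=\Lambda_U|_k$. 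Thus $\Mod(U)\to\Htp(U^*,*)$ acts on $H^k(\conf_N(U))$ through its action on $\Lambda_U|_k$ (equivalently $\Ical_U|_k$), and by the Labute identification of the previous paragraph, through its action on the nilpotent quotient $\pi_U|_k$. By definition $J^k\Mod(U)$ is the kernel of the action on $\pi_U|_k$, so every element of $J^k\Mod(U)$ acts trivially on $H^k(\conf_N(U))$, which is the assertion.

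The one point requiring care is the bookkeeping that turns Theorem \ref{thm:generation1}, stated for $H^{cl}_{n+k}$ in terms of $\Lambda_U|_{n-k}$, into a statement about $H^k$ in terms of $\pi_U|_k$; I would verify the degree substitution $2n-k\mapsto n+(n-k)$ explicitly and confirm that the duality isomorphism is $\Mod(U)$-equivariant (it is, since the action is through orientation-preserving homeomorphisms, so the fundamental class is preserved and the sign twist $\sign_N$ is respected). The genuinely substantive inputs—Theorem \ref{thm:generation1} and Labute's primitivity result—are already available, so the remaining work is this translation plus checking that the surface relator $\zeta$ is primitive, which is standard. I expect no real obstacle beyond this indexing; the deeper content has been front-loaded into the single-$2$-cell machinery of Section \ref{subsec:attaching2cell} and Corollary \ref{cor:zeta-generation}.
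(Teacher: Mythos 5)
Your argument is essentially the one sketched in the paper's introduction, and it is correct in outline, but it is not the route the paper's own proof takes, and the difference is worth noting. You derive the corollary by citing Theorem \ref{thm:generation1} (the single-$2$-cell statement for all of $\Htp(U^*,*)$) as a black box and then doing the degree bookkeeping $H^k(\conf_N(U))\cong H^{cl}_{2n-k}(\conf_N(U))=H^{cl}_{n+(n-k)}$, which lands you in $\Lambda_U|_k$; your indexing is right. The paper instead proves the corollary from the concretely established ingredient, Corollary \ref{cor:zeta-generation}, whose hypothesis is that the homotopy class is induced by an automorphism of the \emph{free} group $\pi_{U_1}$ fixing the attaching word $\zeta$ on the nose. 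The substantive step your write-up omits is the verification of that hypothesis for Johnson elements: this is Lemma \ref{lemma:stabilizer_p_0}, which shows that $J^1\Mod(U)$ (hence $J^k\Mod(U)$ for $k\ge 1$) stabilizes $p_0$ and that any such mapping class is realized, after isotoping it to be the identity on a disk around $p_0$, by an automorphism of $\pi_{U_1}$ fixing $\zeta$. If you lean on Theorem \ref{thm:generation1} you sidestep this, but since the detailed argument in Section \ref{subsec:attaching2cell} only covers the $\zeta$-fixing case, the paper's route is the one that is actually self-contained. Two smaller points: your appeal to Labute is in the unnecessary direction --- to conclude that $J^k\Mod(U)$ acts trivially on $\Lambda_U|_k$ you only need the elementary inclusion $\pi_U^{(k+1)}\subset 1+\Ical_U^{k+1}$, valid for any group; Labute's primitivity theorem is needed only for the converse (i.e.\ for identifying the kernel exactly), which the corollary does not assert and which, as the paper shows in Section \ref{subsect:comparison}, in fact fails at the level of $\Hcal_n(U)$. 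Also note that for $g=0$ the relator $\zeta$ is trivial and not primitive, so the Labute step would not even apply there, whereas the elementary direction always does.
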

\begin{proof}
For $k=0$ there is nothing to show, and so assume $k\ge 1$. By Lemma \ref{lemma:stabilizer_p_0}, the group  $J^k\Mod(U)$ {lies in} the $\Mod(U)$-stabilizer of $p_0$ and  hence is realized by by an endomorphism of $\pi_{U_1}$ which fixes $\zeta$. Since $H^k(\conf_N(U))\cong 
H^{cl}_{2n-k}(\conf_N(U))$ by duality,  it remains to apply Corollary  \ref{cor:zeta-generation}.
\end{proof}

\subsection{Configuration spaces of a closed surface} We derive from the preceding a similar statement about the top cohomology of the configuration spaces of the  closed surface $X$. 
Assume $P=\{*\}$, so that  $U=X\ssm \{*\}$ and $\pi_U=\pi_1(X,*)$. The Birman exact sequence can be understood as saying that $\pi_U$ appears as a point pushing normal subgroup of $\Mod(U)=\Mod(X, \{*\})$ with quotient $\Mod(X)$.  The  action of $\Mod(U)$  on $\pi_U$ is faithful and makes $\pi_U$ act by inner automorphisms so that we have  residual (faithful) outer action  of  $\Mod(X)$ on $\pi_U$.
The Johnson filtration of  $\Mod(X)$ is defined as the image of the Johnson filtration of $\Mod(U)$. So  a mapping class of $X$ belongs to $J^k\Mod(X)$ if and only if  it can be lifted to an  element of  $J_k\Mod(U)$ or equivalently, if  the associated outer automorphism of $\pi_U|_k$ is trivial.

\begin{corollary}\label{cor:closedsurface}
Assume $N\not=\emptyset$. Then the  space $\conf_N(X))$ has the homotopy type of a finite complex of dimension $n+1$.

The group  $J^k\Mod(X)$ acts trivially  on $H^k(\conf_N(X)$ for all $k$, but if $k=n+1$, then even  
$J^{n-1}\Mod(X)$ acts trivially.
\end{corollary}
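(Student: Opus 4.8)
\medskip

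The plan is to reduce everything to the open-surface case already settled in Corollary~\ref{cor:johnsonkernel}, by comparing $\conf_N(X)$ with $\conf_N(U)$ for $U=X\ssm\{*\}$. For the dimension I would iterate the Fadell--Neuwirth fibrations: projecting a configuration onto one of its points gives $\conf_N(X)\to X$ with fiber $\conf_{N\ssm i}(X\ssm\{pt\})\cong\conf_{n-1}(U)$, and each subsequent coordinate projection presents this fiber as an iterated bundle whose fibers are open punctured surfaces, hence homotopy equivalent to wedges of circles. Thus the fiber has homotopy dimension $\le n-1$; as $\dim X=2$, the Leray--Serre spectral sequence has $E_2^{s,t}=0$ for $s+t>n+1$, so $\conf_N(X)$ has cohomological dimension $\le n+1$. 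Since $\conf_N(X)$ is the interior of its (compact) Fulton--MacPherson compactification it has the homotopy type of a finite complex, and Wall's realization theorem upgrades the cohomological bound to a geometric one when $n+1\ge 3$; the case $n=1$ is just $\conf_1(X)=X$.

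For the action statements I would use the stratification $\conf_N(X)=\conf_N(U)\sqcup Z$ with $Z=\bigsqcup_{i\in N}Z_i$, where $Z_i=\{f:f(i)=*\}\cong\conf_{N\ssm i}(U)$ is a closed oriented submanifold of real codimension $2$ (at most one point can lie at $*$). The associated Gysin sequence
\[
\cdots\to H^{k-2}(Z)\to H^{k}(\conf_N(X))\to H^{k}(\conf_N(U))\to H^{k-1}(Z)\to\cdots
\]
is equivariant for $\Mod(X,*)=\Mod(U)$, which fixes $*$ and preserves each $Z_i$; recall that $J^m\Mod(X)$ is by definition the image of $J^m\Mod(U)$ and that the point-pushing kernel of $\Mod(U)\to\Mod(X)$ acts trivially on $\conf_N(X)$, so every $\phi\in J^m\Mod(X)$ may be studied through a lift $\tilde\phi\in J^m\Mod(U)$.

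The top degree is then immediate and yields the better range: as $\conf_N(U)$ has dimension $\le n$ we have $H^{n+1}(\conf_N(U))=0$, so the Gysin map $H^{n-1}(Z)\twoheadrightarrow H^{n+1}(\conf_N(X))$ is onto; since $H^{n-1}(Z)=\bigoplus_i H^{n-1}(\conf_{N\ssm i}(U))$ with $|N\ssm i|=n-1$, Corollary~\ref{cor:johnsonkernel} makes $J^{n-1}\Mod(U)$ act trivially on it, hence on its quotient $H^{n+1}(\conf_N(X))$. Because here $H^{n+1}(\conf_N(X))$ is literally a \emph{quotient}, there is no extension to worry about, and lifting gives the claim for $J^{n-1}\Mod(X)$.

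For general $k$ the same sequence exhibits $H^k(\conf_N(X))$ as an extension of a subgroup of $H^{k}(\conf_N(U))$ by a quotient of $H^{k-2}(Z)$. A lift $\tilde\phi\in J^k\Mod(U)\subseteq J^{k-2}\Mod(U)$ acts trivially on both ends by Corollary~\ref{cor:johnsonkernel}, and I expect this extension to be the main obstacle: triviality on the sub-object and on the quotient only forces $(\tilde\phi_*-\id)^2=0$, leaving a potentially nonzero unipotent part valued in $\Hom$ of the quotient to the sub. To kill it I would show that the full $\Mod(U)$-action on $H^k(\conf_N(X))$ factors through $\aut(\pi_U|_k)$. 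Running the Leray--Serre spectral sequence of $\conf_N(X)\to X=K(\pi_U,1)$, each contributing term $E_2^{s,t}=H^{s}(X;\Vcal_t)$ with $s+t=k$ has for coefficients the local system $\Vcal_t=H^t(\conf_{n-1}(U))$, whose monodromy factors through $\pi_U|_t$; the key lemma to establish is that such an $H^s(X;\Vcal_t)$ is functorial in the nilpotent quotient $\pi_U|_{s+t}=\pi_U|_k$ under automorphisms, so that $J^k$ acts trivially on each associated graded piece. The delicate step is to promote this term-by-term naturality to the abutment, which I would attempt via a $\Mod(U)$-equivariant splitting of the Gysin sequence coming from Poincar\'e duality on $\conf_N(X)$, or a weight argument. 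Granting that the action on $H^k(\conf_N(X))$ factors through $\pi_U|_k$, the result follows at once, since $J^k\Mod(X)$ is precisely the kernel of $\Mod(X)\to\aut(\pi_U|_k)$.
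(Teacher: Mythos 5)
Your decomposition is the same one the paper uses: $\conf_N(X)=\conf_N(U)\sqcup\bigsqcup_{i\in N}\conf_{N\ssm\{i\}}(U)$, with the Gysin sequence playing the role of the paper's mapping cone of $\Cscr_N(U)_\pt\to\oplus_i\Cscr_{N\ssm\{i\}}(U)_\pt$ (the two are Poincar\'e--Lefschetz dual to each other), and your treatment of the dimension statement via the fibration $\conf_N(X)\to X$ with fiber of homotopy dimension $n-1$, as well as of the top degree $k=n+1$ (where $H^{n+1}(\conf_N(U))=0$ forces $H^{n+1}(\conf_N(X))$ to be a genuine quotient of $\oplus_i H^{n-1}(\conf_{N\ssm\{i\}}(U))$, so Corollary \ref{cor:johnsonkernel} applies with no extension to worry about), is exactly the paper's argument.

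The gap is in the general-$k$ case, and you have located it yourself: knowing that a lift $\tilde\phi\in J^k\Mod(U)$ acts trivially on the sub (a quotient of $H^{k-2}(Z)$) and on the quotient (a subgroup of $H^k(\conf_N(U))$) only gives $(\tilde\phi_*-\id)^2=0$, and your proposed repairs (a term-by-term Leray--Serre analysis, an equivariant splitting, a weight argument) are left as hopes rather than proofs --- the sentence ``Granting that the action on $H^k(\conf_N(X))$ factors through $\pi_U|_k$'' is precisely the assertion to be established. The paper closes this by never descending to homology of the two pieces separately: by Lemma \ref{lemma:stabilizer_p_0} an element of $J^k\Mod(U)$ is realized by an automorphism of the free group $\pi_{U_1}$ fixing $\zeta$, hence (as in the proof of Corollary \ref{cor:zeta-generation}) by a \emph{cellular} map $F$ acting on the entire two-row double complex $\Cscr_N(X)_\pt$; the generators of the degree-$(2n-k)$ chain group are expressed through the maps $\tri^{\Ibold}:\Ical_{U}|_i\to\Hcal_I(U)$ with $i\le k$ (resp.\ $i\le k-2$ for the $Z$-row), so the action on the homology of the \emph{total} complex is seen directly to factor through $\Ical_U|_k$, and the extension problem never arises. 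If you want to complete your version, the missing ingredient is exactly this chain-level (or at least equivariant-filtered-complex) control, not an a posteriori splitting of the Gysin sequence.
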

\begin{proof} 
In order to prove the first assertion, we choose $o\in N$. Then  the 
map $\pi: \conf_N(X)\to X$,  $f\mapsto f(o)$  is a  fiber bundle  whose fiber over $x\in X$ is $\conf_{N\ssm\{o\}}(X\ssm \{x\})$. 
The latter has the homotopy type of a finite complex of dimension $n-1$. The base is a surface  and hence  $\conf_N(X)$ has the homotopy type of a finite complex of dimension $n+1$.  

To prove the second assertion, recall  that we endowed  $X^N=U^{*N}$ with a cell decomposition  for which $\conf_N(U)$ is an open union of cells of dimension $\le n$ with the union of $n$-cells in $\conf_N(U)$ being $\conf_N(U_1)$. This   cell decomposition has the property that $\conf_N(X)$ is also an open union of cells: the cells 
we need to add have exactly one factor equal to $\{*\}$. Thus $\conf_N(X)\ssm \conf_N(U)$ is the disjoint union of the configuration spaces  $\conf_{N\ssm \{i\}}(U)$, $i\in N$. Then $\Cscr_N(X)_\pt$ is  the mapping cone of the boundary map
\[
\p': \Cscr_N(U)[-1]_\pt\to \oplus_{i\in N} \Cscr_{N\ssm \{i\}}(U)_\pt.
\]

This mapping cone regards the above  as a double complex: we put the source in row $1$ (and  multiply its  boundary operator by $-1$) and put the target in row $0$ (this makes the target a subcomplex). The  homology in degree $l$ of the  associated single complex is  then   $H^{cl}_l(\conf_N(X))\cong H^{2n-l}(\conf_N(X))$.
An argument similar to the proof of Corollary \ref{cor:johnsonkernel} shows  that $J^{2n-l}\Mod(U)$ acts trivially on  the  $l$th summand of this complex and we then conclude as before. The case $l=n-1$ case is special, because in that degree the double complex is reduced to 
$\oplus_{i\in N} \Cscr_{N\ssm \{i\}}(U)_{n-1}$.
\end{proof}

\subsection{An investigation of top level behavior}\label{subsection:toplevel}  That moment has certainly arrived. Returning to the punctured surface $U$, let us write
\[
S_\pt (U):=\oplus_{n=0}^\infty \Ical_{U}^{n}/\Ical_{U}^{n+1}
\]
for the graded algebra associated to the filtration defined by the augmentation ideal. This algebra is generated in degree one 
(which is just $H_1(U^*)$)
The similarly defined $S_\pt (U_1)$ has  a degree one part canonically isomorphic to $S_1 (U)$ (for $H_1(U_1^*)\to H_1(U^*)$ is an isomorphism)
and is in fact the tensor algebra on $S_1 (U)$.

The natural algebra homomorphism $\Lambda_{U_1}\to \Lambda_U$ is a surjection whose kernel is the 2-sided ideal generated by the image of  $\zeta-1$. A similar statement holds
for the  completions with respect to their augmentation ideals:  $\hat \Lambda_{U_1}\to \hat \Lambda_{U}$ is a surjective homomorphism  
whose kernel is the closure of the  2-sided ideal generated by the image of  $\zeta-1$ in $\hat \Lambda_{U_1}$.  

Let $\mu\in S_2(U_1)$ represents the  fundamental class $[X]$ of $X$, i.e.,  the image of $[X]\in H_2(X)$ under the diagonal map  $H_2(X)\to H_1(X)^{\otimes 2}$ composed with   $H_1(X)^{\otimes 2}\to H_1(U^*_1)^{\otimes 2}=S_2(U_1)$. In terms of our generators:
\begin{equation}\label{eqn:muX}
\textstyle \mu:=\sum_{i=1}^g(a_i\otimes a_{-i}-a_{-i}\otimes a_i)\in H_1(X)^{\otimes 2}\subset H_1(X,P)^{\otimes 2}=S_1(U_1^*).
\end{equation}

The following is well-known. We omit the proof as we will prove a stronger assertion in Lemma \ref{lemma:expansion_delta}.

\begin{lemma}\label{lemma:gradepass}
We have  $\zeta\equiv 1+\mu \pmod{\Ical^3_U}$ and hence the algebra homomorphism $\hat \Lambda_{U_1}\to \hat \Lambda_{U}$ gives after passage to their augmentation gradings
a surjective homomorphism of graded algebras
\begin{equation}\label{eqn:graded}
S_\pt (U_1):=\oplus_{n=0}^\infty \Ical_{U_1}^{n}/\Ical_{U_1}^{n+1}\to  \oplus_{n=0}^\infty \Ical_{U}^{n}/\Ical_{U}^{n+1}=: S_\pt (U)
\end{equation}
whose  kernel is the two-sided ideal generated by the element $\mu\in S_2(U_1)$.\hfill $\square$
\end{lemma}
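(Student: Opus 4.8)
The plan is to treat the two assertions in turn: the congruence $\zeta\equiv 1+\mu$ is a Fox/Magnus-type computation in the group ring, and the description of the kernel then follows by passing to associated gradeds and pinning down the ideal of initial forms of the relation. First I would prove the congruence, working in $\Lambda_{U_1}=\ZZ\pi_{U_1}$. Writing $\tilde\alpha_{\pm i}:=\alpha_{\pm i}-1\in\Ical_{U_1}$ and using the identity $(a,b)-1=(ab-ba)\,a^{-1}b^{-1}$ together with $ab-ba\equiv\tilde a\tilde b-\tilde b\tilde a\pmod{\Ical_{U_1}^3}$ and $a^{-1}b^{-1}\equiv1\pmod{\Ical_{U_1}}$, I obtain $(\alpha_i,\alpha_{-i})-1\equiv\tilde\alpha_i\tilde\alpha_{-i}-\tilde\alpha_{-i}\tilde\alpha_i\pmod{\Ical_{U_1}^3}$ for each factor. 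Each such term lies in $\Ical_{U_1}^2$, so in the expansion of $\zeta=(\alpha_1,\alpha_{-1})\cdots(\alpha_g,\alpha_{-g})$ every summand involving two or more factors $(\alpha_i,\alpha_{-i})-1$ lands in $\Ical_{U_1}^4\subset\Ical_{U_1}^3$; the $\beta$-syllables of the attaching word \eqref{eqn:concat} cancel freely and contribute nothing. Hence $\zeta-1\equiv\sum_i(\tilde\alpha_i\tilde\alpha_{-i}-\tilde\alpha_{-i}\tilde\alpha_i)$, whose class in $S_2(U_1)$ is exactly $\mu$ by \eqref{eqn:muX}.

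For the statement about the kernel I would pass to associated gradeds. Recall that $S_\pt(U_1)$ is the tensor algebra on $S_1(U_1)$ and that $\hat\Lambda_{U_1}\to\hat\Lambda_U$ is surjective with kernel the closure $\overline J$ of the two-sided ideal generated by $\zeta-1$. Endowing $\hat\Lambda_U$ with the quotient filtration (which is its own $\Ical_U$-adic filtration, since $\Ical_{U_1}$ maps onto $\Ical_U$), the associated graded of this quotient is $S_\pt(U_1)/\gr\overline J$, where $\gr\overline J$ is the ideal of initial forms of the elements of $\overline J$. As $\mu$ is the initial form of $\zeta-1$ by the first step, the two-sided ideal $(\mu)\subseteq S_\pt(U_1)$ is contained in $\gr\overline J$, and this already yields the surjection $S_\pt(U_1)/(\mu)\twoheadrightarrow S_\pt(U)$ of the lemma.

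The hard part will be the reverse inclusion $\gr\overline J\subseteq(\mu)$, i.e. that no combination $\sum_k u_k(\zeta-1)v_k$ has an initial form escaping $(\mu)$; the danger is that the naive leading part $\sum_k\bar u_k\,\mu\,\bar v_k$ cancels, pushing the true initial form into higher degree. Rather than analyse the syzygies of $\mu$ by hand, I would argue by ranks: both $S_\pt(U_1)/(\mu)$ and $S_\pt(U)$ are free abelian of finite rank in each degree (both groups being residually torsion-free nilpotent), so the surjection above is an isomorphism as soon as their Poincar\'e series agree. Since $\mu$ involves only the $\alpha$-generators and $\pi_U\cong\pi_1(X)\ast F_l$ splits off a free factor of rank $l$ (compatibly with $\uHcal(U)\cong\uHcal(X\ssm\{p_0\})\otimes\uHcal(\RR)^{\otimes l}$), both sides decompose as augmented-algebra coproducts whose Poincar\'e series combine by the standard rule $P^{-1}=P_1^{-1}+P_2^{-1}-1$; this reduces the equality to the closed-surface case $l=0$. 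There the required identity $S_\pt(X)\cong T(H_1(X))/(\mu)$ --- equivalently the quadratic Poincar\'e series $1/(1-2g\,t+t^2)$ --- is the classical fact that the single defining relation of a surface group is inert, its degree-two leading term $\mu$ already generating the whole ideal of initial forms. This closed-surface quadraticity (in the spirit of the results of Labute \cite{labute} cited in the introduction) is the genuine obstacle, and it is precisely what the stronger Lemma \ref{lemma:expansion_delta} is designed to supply.
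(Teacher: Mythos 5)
Your proposal is correct and, in substance, fills in exactly what the paper leaves implicit: the paper omits the proof, deferring the congruence $\zeta\equiv 1+\mu\pmod{\Ical_U^3}$ to the explicit expansions $\tri^1_{U_1}(\zeta)=0$, $\tri^2_{U_1}(\zeta)=\mu$ of Lemma \ref{lemma:expansion_delta}, and treating the kernel statement as classical. Your commutator computation $(\alpha_i,\alpha_{-i})-1\equiv\tilde\alpha_i\tilde\alpha_{-i}-\tilde\alpha_{-i}\tilde\alpha_i\pmod{\Ical_{U_1}^3}$ is the same calculation in group-ring rather than configuration-space language, and your identification of the real content of the kernel assertion --- that the ideal of initial forms of the closure of $(\zeta-1)$ is generated by the single initial form $\mu$, i.e.\ the quadraticity of $S_\pt$ of a surface group --- is exactly right; this is Labute's theorem \cite{labute}, which the paper also relies on (it is invoked in the introduction for the companion statement $(\Ical_U^{n+1}+1)\cap\pi_U=\pi_U^{(n+1)}$). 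Your reduction from the punctured to the closed case via the coproduct decomposition $\pi_U\cong\pi_1(X)*F_l$ and the Poincar\'e-series identity $P^{-1}=P_1^{-1}+P_2^{-1}-1$ is a clean way to organize this that the paper does not spell out. One small correction: your closing sentence attributes the closed-surface quadraticity to Lemma \ref{lemma:expansion_delta}, but that lemma only supplies the initial form of the single relation (and that it sits in $F^2\setminus F^3$); it says nothing about the ideal of initial forms of the full kernel ideal, so the quadraticity genuinely must come from Labute's theorem, as you correctly cite earlier in the same sentence. With that attribution fixed, the argument is complete.
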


So  the map \eqref{eqn:graded} in degree $n$, 
\[
H_1(U^*)^{\otimes n}\cong S_n(U_1)\to S_n(U),
\]
has as kernel the sum of the images of $S_{n-2}(U_1)=H_1(U^*)^{\otimes (n-2)}$ under the insertions of $\mu$ in the tensor slots of $H_1(U^*)^{\otimes n}$  in positions $(i, i+1)$, $i=1,2,\dots, n-1$; it will be useful to have the notation  
\begin{equation}\label{eqn:mu-tensor}
    \mu_{i,j}:H_1(U^*)^{\otimes(n-2)}\to H_1(U^*)^{\otimes n}
\end{equation}
for the insertion of $\mu$ in the tensor slots $(i,j)$ for any $1\le i<j\le n$.
By our definition, we  have $S_n(U_1)=\Ical_{U_1^*}^{n}/\Ical_{U_1^*}^{n+1}=
F^n(\Ical_{U_1}|_n)$ and  $S_n(U)=\Ical_{U^*}^{n}/\Ical_{U^*}^{n+1}=F^n(\Ical_{U}|_n)$ and so what we just described
is also the kernel of the map $F^n(\Ical_{U_1}|_n)\to F^n(\Ical_{U}|_n)$.  

We compare this  with the map
$F^n\Hcal_n(U_1)\to F^n\Hcal_n(U)$. Theorem \ref{eqn:tensormap1} tells us  that  the action of $\Mod(U)$ on $\Hcal_n(U)$ is through its action on
$\Ical_U|_n$. By definition, $F^n\Hcal_n(U)$ is spanned by the images of the maps \eqref{eqn:tensormap1} with all $n_i$ equal to $1$. In other words, this is the image of the map 
\begin{equation}\label{eqn:Fnrestriction}
S_1(U)^{\otimes n}\cong H_1(U_*,*)^{\otimes n}\cong  H_n((U_*,*)^n) \to  H_n((U^*)^n, D_n(U))=\Hcal_n(U).
\end{equation}
This is just the map  induced by the inclusion of pairs $(U_*,*)^n\subset (U_*^n, D_n(U))$. Note that it  is $\Sfrak_n$-equivariant. In the same way, we find that $S_1(U_1)^{\otimes n}$ maps $\Sfrak_n$-equivariantly onto $F^n\Hcal_n(U_1)$.

\begin{corollary}\label{cor:}
The map $S_n(U_1)=S_1(U_1)^{\otimes n}\to F^n\Hcal_n(U_1)$ is an isomorphism. The kernel of its composite with the  surjection  
$F^n\Hcal_n(U_1)\to F^n\Hcal_n(U)$ contains 
the sum of the images of $\mu_{i,j}$ for all $1\le i<j\le n$. In particular, $S_n(U_1)\to F^n\Hcal_n(U)$ factors through  a surjection $S_n(U)\to F^n\Hcal_n(U)$.
\end{corollary}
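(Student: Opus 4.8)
The plan is to establish the three assertions in order: the isomorphism $S_n(U_1)\cong F^n\Hcal_n(U_1)$ by reducing to the real line, the containment of $\sum_{i<j}\im(\mu_{i,j})$ in the kernel by reading it off the bar-complex presentation of $\Hcal_N(U)$, and the factorisation as a formal consequence of Lemma \ref{lemma:gradepass}.

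\smallskip

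For the first assertion, observe that $U_1=X_1\ssm P$ is the disjoint union of its $2g+l$ open $1$-cells, hence homeomorphic to a disjoint union of $2g+l$ copies of $\RR$. By Section \ref{subsect:R} and the monoidal identity \eqref{eqn:spatial-lmonoidal}, $\Hcal_N(U_1)$ decomposes as $\bigoplus_{\phi}\bigotimes_e \Hcal_{\phi^{-1}(e)}(\RR)$, the sum being over all maps $\phi$ from $N$ to the set $E$ of $1$-cells. A basis vector of $S_1(U_1)^{\otimes N}=\bigoplus_\phi \ZZ$ is sent by \eqref{eqn:Fnrestriction} into the summand indexed by its $\phi$, where in each factor it becomes the K\"unneth product of the singleton classes; by the Eilenberg--Zilber formula of Lemma \ref{lemma:EZformula} this product is the full antisymmetrisation $\sum_\sigma\sign(\sigma)\tri^\sigma$, a generator of the rank-one group $F^{|\phi^{-1}(e)|}\Hcal_{\phi^{-1}(e)}(\RR)$. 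Distinct $\phi$ land in distinct summands, so the map is injective; it is onto $F^n\Hcal_n(U_1)$ by the definition of the filtration, and hence an isomorphism.

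\smallskip

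For the second assertion I would use the presentation \eqref{eqn:degree_n_pres} with $U_o=U_1$ and $\zeta$ the attaching commutator \eqref{eqn:defzeta}. Since the filtration is multiplicative under the K\"unneth product, it suffices to analyse $\tri^\Ibold_{U_1}(\zeta)$ for a two-element subset $I\subset N$ with an order $\Ibold$. The ring homomorphism $\tri_{U_1,t}$ and the decomposition formula \eqref{eqn:decformula} identify, in degree two, $\tri^\Ibold_{U_1}$ on a product $(\g-1)(\g'-1)$ with the K\"unneth product $\tri^1(\g)\times\tri^1(\g')$, i.e.\ with the map of the first assertion restricted to $S_2(U_1)$. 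As $\zeta\equiv 1+\mu \pmod{\Ical_{U_1}^3}$ by Lemma \ref{lemma:gradepass} and $\tri^\Ibold_{U_1}(\zeta)$ depends only on $\zeta$ modulo $\Ical_{U_1}^3$, it follows that $\tri^\Ibold_{U_1}(\zeta)\in F^2\Hcal_I(U_1)$ is the image of $\mu$ placed in the slots $I$. The K\"unneth products of this class with all-singleton classes $\psi$ on $N\ssm I$ then span $\im(\mu_{i,j})$ inside $F^n\Hcal_n(U_1)$; since each such product lies in the image of $\times_\zeta$, it dies in $\Hcal_n(U)$. Letting $I$ run over all two-element subsets yields the stated containment for every $1\le i<j\le n$.

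\smallskip

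The third assertion is then formal: by Lemma \ref{lemma:gradepass} the kernel of $S_n(U_1)\to S_n(U)$ is the degree-$n$ part of the two-sided ideal generated by $\mu$, namely $\sum_i\im(\mu_{i,i+1})$, which is contained in $\sum_{i<j}\im(\mu_{i,j})$ and hence in the kernel just produced; the composite therefore factors through $S_n(U)$, and it is surjective because $S_1(U_1)\twoheadrightarrow S_1(U)$ while $F^n\Hcal_n(U)$ is by definition the image of \eqref{eqn:Fnrestriction} for $U$. The step I expect to demand the most care is the middle one: one must match the purely algebraic insertion $\mu_{i,j}$ of \eqref{eqn:mu-tensor} with the geometric relation coming from the attached $2$-cell, which amounts to checking that $\tri^\Ibold_{U_1}(\zeta)$ genuinely lands in the bottom filtration piece $F^2$ and is carried, with the correct sign, to the image of $\mu$ by the isomorphism of the first assertion.
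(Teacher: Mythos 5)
Your proposal is correct and follows essentially the same route as the paper: the first assertion via the cell/basis structure of $\Hcal_n(U_1)$ (your monoidal decomposition into copies of $\RR$ plus Eilenberg--Zilber is the same count the paper phrases as ``each $n$-cell appears precisely once''), the second via the presentation \eqref{eqn:degree_n_pres} with $I=\{i,j\}$ together with $\tri^2_{U_1}(\zeta)=\mu$, and the third formally from Lemma \ref{lemma:gradepass}.
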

\begin{proof}
The map $S_n(U_1)=S_1(U_1)^{\otimes n}\to F^n\Hcal_n(U_1)$ is defined on the cellular level: it assigns to a product of $1$-cells a sum of $n$-cells in $\conf_n(U_1)$. As any $n$-cell in $\conf_n(U_1)$ appears precisely  once,  the first assertion follows.

For the proof of the second assertion, we  use  the presentation \eqref{eqn:degree_n_pres}: if we take there   $N=[n]$ and $I=\{i,j\}$, then  it tells us  that  the image of the  map 
\[
\psi\in \Hcal_{[n]\ssm I}(U_1)\mapsto \tri^{I}_{U_1}(\zeta)\times\psi\in  \Hcal_n(U_1)
\]
 is in the kernel of  the map $\Hcal_n(U_1)\to \Hcal_n(U)$. If we restrict this map to   $F^{n-2}\Hcal_{[n]\ssm I}(U_1)\cong S_{n-2}(U_1)$, then 
by Lemma \ref{lemma:gradepass}, the right had side is obtained by the insertion of $\mu$ in the tensor slots of $S_n(U_1)$  in positions  $(i,j)$.  In  particular  the resulting map $\mu_{i,j}:S_{n-2}(U_1)\to S_n(U_1)$  is in the kernel of $S_n(U_1)\to F^n\Hcal_n(U)$.
\end{proof}

If the genus of $X$ is at least $2$ and $n\ge 3$, then the  $(1,n)$-insertion of $S_{n-2}(U)$ is certainly not contained in the sum of its $(i, i+1)$ -insertions and so the induced map  $S_n(U)\to F^n\Hcal_n(U)$ is then not injective.

\begin{remark}\label{rem:}
In subsequent work of the second author it will be shown  that the kernel contains the $\ZZ\Sfrak_n$-span of 
$\mu \otimes S_1(U)^{\otimes (n-2)}$ as a subgroup of finite index.
\end{remark}

 \subsection{A study of the kernel $\Kcal_n(U,U_1)$} 
Theorem \ref{thm:kernelmultiplecells} described the kernel $\Kcal_n(U, U_1)=\Ker\big(\Hcal_n(U_1)\to \Hcal_n(U)\big)$,
of which we have so far only studied the intersection with $F^n\Hcal_n(U_1)$, as the span of the $\Scal_n$-orbit of $\sum_{k=1}^n\tri^k_{U_1}(\zeta)\times \Hcal_{n-k}(U_1)$. It is then desirable to compute some of the elements $\tri^k_{U_1}(\zeta)$ for small $k$. We here recall that 
\begin{multline}\label{eqn:tri_exp}
\textstyle \tri_{{U_1},t}(\zeta)=\sum_{n=0}^\infty \tri_{U_1}^n(\zeta)t^n=\\
\tri_{{U_1},t}(\alpha_1)\tri_{{U_1},t}(\alpha_{-1})\tri_{{U_1},t}(\alpha_{1})^{-1}\tri_{{U_1},t}(\alpha_{-1})^{-1}\tri_{{U_1},t}(\alpha_2)\cdots 
\tri_{{U_1},t}(\alpha_{-g})^{-1} \in \widehat \Hcal_\pt({U_1}),
\end{multline}
where we used that $\tri_{{U_1},t}$ is an algebra homomorphism.
So this is a formal power series  in $t$ with constant term $1$ and  whose  $n$th  coefficient $\tri_{U_1}^n(\zeta)$ is a  noncommutative polynomial in  the $\tri_{U_1}^j(\alpha_i)$. That polynomial  is homogeneous  of degree $n$, if we stipulate that $\tri_{U_1}^j(\alpha_i)$ has degree $j$.  The expansion of $\tri_{{U_1},t}(\zeta)$ can of course be computed up to arbitrary order.

\begin{lemma}\label{lemma:expansion_delta}
We have that $\tri_{{U_1}}^n(\zeta)\in F^2\Hcal_n({U_1})\ssm F^3\Hcal_n({U_1})$ for all $n>0$. Furthermore $\tri^1_{U_1}(\zeta)=0$, $\tri^2_{U_1}(\zeta)=\mu$ and
\begin{equation*}
\textstyle \tri_{U_1}^3(\zeta)= \sum_{i=\pm 1}^{\pm g} \sign (i)[\tri^2(\alpha_i), a_{-i}] -\sum_{i=1}^g [a_i,a_{-i}](a_i+a_{-i}) 
\end{equation*}
where $a_i$ is identified with $\tri_{U_1}^1(\alpha_i)$, and so
\[
\Kcal_i(U,U_1)=
\begin{cases}
0 &\text{if $i=1$;}\\
\ZZ\mu  &\text{if $i=2$;}\\ 
\ZZ \tri^3_{U_1}(\zeta)+\mu\times\Hcal_1(U_1)+\Hcal_1(U_1)\times \mu +\mu_{1,3}(\Hcal_1(U_1)) &\text{if $i=3$.}\\ 
\end{cases}
\]
Here, the term $\mu_{1,3}(\Hcal_1(U_1))$ is the image of
$$
 \Hcal_1(U_1)\cong S_1(U_1)\xrightarrow{\mu_{1,3}} S_1(U_1)^{\otimes 3}\xrightarrow{\cong} F^3\Hcal_3(U_1).
$$
\end{lemma}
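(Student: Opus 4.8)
The plan is to prove Lemma \ref{lemma:expansion_delta} by first establishing the structural claims about the filtration degree of $\tri^n_{U_1}(\zeta)$, then carrying out the explicit low-order expansion of the power series $\tri_{{U_1},t}(\zeta)$ using the algebra-homomorphism property \eqref{eqn:tri_exp}, and finally deducing the description of $\Kcal_i(U,U_1)$ from Theorem \ref{thm:kernelmultiplecells}.

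\textbf{The filtration degree.} First I would show $\tri^n_{U_1}(\zeta)\in F^2\Hcal_n(U_1)\ssm F^3\Hcal_n(U_1)$. Since $\zeta$ is a product of commutators $(\alpha_i,\alpha_{-i})$, its image $\zeta-1$ in the completed group ring $\hat\Lambda_{U_1}$ lies in $\Ical_{U_1}^2$ (a commutator of units is congruent to $1$ modulo the square of the augmentation ideal). Because $\tri_{{U_1},t}$ is a ring homomorphism sending $\Ical_{U_1}$ into the augmentation part of $\widehat\Hcal_\pt(U_1)$, and because the filtration $F^\pt$ is multiplicative under the K\"unneth product, this immediately places $\tri^n_{U_1}(\zeta)$ in $F^2\Hcal_n(U_1)$. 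For the non-membership in $F^3$, I would compute the leading (lowest-filtration) term, which by Lemma \ref{lemma:gradepass} and the isomorphism $S_n(U_1)\cong F^n\Hcal_n(U_1)$ of Corollary \ref{cor:} is governed by the image of $\zeta-1$ in $\gr^2$: this is exactly $\mu=\sum_i(a_i\otimes a_{-i}-a_{-i}\otimes a_i)$, which is nonzero, so $\tri^n_{U_1}(\zeta)$ has a nonvanishing component outside $F^3$ once we track it through the grading.

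\textbf{The explicit expansion.} For the concrete identities I would expand the right-hand side of \eqref{eqn:tri_exp} order by order. Writing $A_i:=\tri_{{U_1},t}(\alpha_i)=1+a_i t+\tri^2(\alpha_i)t^2+\cdots$ (a unit with $a_i=\tri^1_{U_1}(\alpha_i)$), I need the inverse $A_i^{-1}=1-a_i t+(a_i^2-\tri^2(\alpha_i))t^2+\cdots$, and then multiply out the commutators $A_iA_{-i}A_i^{-1}A_{-i}^{-1}$ and take the product over $i$. Collecting the $t^1$ coefficient gives $0$ (the linear terms cancel in each commutator), the $t^2$ coefficient recovers $\mu$ by a direct bookkeeping of the cross terms $a_i a_{-i}-a_{-i}a_i$, and the $t^3$ coefficient requires keeping the second-order factors $\tri^2(\alpha_i)$ together with the cubic cross terms. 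The main bookkeeping burden is here: organizing the cubic terms into the commutator shape $\sum_{i}\sign(i)[\tri^2(\alpha_i),a_{-i}]-\sum_{i=1}^g[a_i,a_{-i}](a_i+a_{-i})$ and verifying the signs, where the $\sign(i)$ in the first sum comes from the difference between the $\alpha_i$ and $\alpha_{-i}$ contributions and the noncommutative K\"unneth product is that of $\widehat\Hcal_\pt(U_1)$.

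\textbf{The kernel.} Given these expansions, the description of $\Kcal_i(U,U_1)$ follows by specializing Theorem \ref{thm:kernelmultiplecells}, which states that $\Kcal_n(U,U_1)$ is the $\Scal_n$-span of $\sum_{k=1}^n\tri^k_{U_1}(\zeta)\times\Hcal_{n-k}(U_1)$. For $i=1$ this vanishes since $\tri^1_{U_1}(\zeta)=0$ and there is no lower term; for $i=2$ only the $k=2$ term survives, giving $\ZZ\mu$; for $i=3$ the contributions are $\tri^3_{U_1}(\zeta)$ (from $k=3$), $\mu\times\Hcal_1(U_1)$ and $\Hcal_1(U_1)\times\mu$ (from $k=2$ on either side), and the $\Scal_3$-orbit closes these up, with the third placement of $\mu$ realized as $\mu_{1,3}(\Hcal_1(U_1))$. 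The one point demanding care is checking that these listed generators already exhaust the $\Scal_3$-orbit and that $\mu_{1,3}(\Hcal_1(U_1))$ is genuinely obtained by a permutation acting on $\mu\times\Hcal_1(U_1)$ under the identification $F^3\Hcal_3(U_1)\cong S_1(U_1)^{\otimes 3}$ of Corollary \ref{cor:}.

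The hard part will be the degree-three coefficient extraction: it is the only step where the noncommutativity of $\widehat\Hcal_\pt(U_1)$, the inverses $A_i^{-1}$, and the interaction between the genus-indexed commutators all contribute simultaneously, and matching the output precisely to the stated formula (including the $\sign(i)$ and the factor $(a_i+a_{-i})$) is where sign and ordering errors are most likely to creep in.
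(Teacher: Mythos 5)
Your plan for the explicit expansions and for the inclusion $\tri^n_{U_1}(\zeta)\in F^2\Hcal_n(U_1)$ matches the paper's proof: the paper likewise extracts the $t^1$, $t^2$, $t^3$ coefficients directly from the product formula \eqref{eqn:tri_exp}, and establishes membership in $F^2$ by applying the decomposition formula (equivalently, the ring-homomorphism property together with $\zeta-1\in\Ical_{U_1}^2$ and the multiplicativity of the filtration), reducing to $\sum_i \tri^n_{U_1}(\alpha_i\alpha_i^{-1})=0$ modulo $F^2$. Your treatment of the kernel $\Kcal_i(U,U_1)$ via Theorem \ref{thm:kernelmultiplecells} is also the intended deduction; the paper does not spell out the $\Scal_3$-orbit bookkeeping either, and you correctly flag it as the point needing care.

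The genuine gap is in the non-membership $\tri^n_{U_1}(\zeta)\notin F^3\Hcal_n(U_1)$ for $n\ge 3$. Your argument says the statement follows because the image of $\zeta-1$ in $\gr^2$ is $\mu\neq 0$ and one can ``track it through the grading,'' but this is precisely the step that needs a mechanism: for $n>2$ the class $\tri^n_{U_1}(\zeta)$ lives in $\Hcal_n(U_1)$, the quotient $F^2\Hcal_n(U_1)/F^3\Hcal_n(U_1)$ is built from two-part partitions of $[n]$ and is not $S_2(U_1)$, and the isomorphism $S_n(U_1)\cong F^n\Hcal_n(U_1)$ you invoke only sees the top filtration step $F^n$, not $F^2/F^3$. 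What you are missing is a functional that annihilates $F^3\Hcal_n(U_1)$ and detects the degree-$2$ information. The paper supplies exactly this: the iterated degeneracy map $\partial^2_n:\Hcal_n(U_1)\to\Hcal_2(U_1)$ of Lemma \ref{lem:iterateddegeneracy} kills $F^3\Hcal_n(U_1)$, while Lemma \ref{lemma:truncation} shows it sends $\tri^n_{U_1}(\zeta)$ to $\pm\tri^2_{U_1}(\zeta)=\pm\mu\neq 0$, which forces $\tri^n_{U_1}(\zeta)\notin F^3\Hcal_n(U_1)$. Without this (or an equivalent detecting map), your ``leading term'' heuristic does not close.
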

\begin{proof}
We first check the expansions. We find  that $\tri_{U_1}^1((\alpha_i, \alpha_{-i}))= a_i+a_{-i}+ (-a_i) + (-a_{-i})=0$ and hence that $\tri_{U_1}^1(\zeta)=0$.
 Similarly, we get
 \begin{multline*}
\tri_{U_1}^2((\alpha_i, \alpha_{-i}))=\tri_{U_1}^2(\alpha_i) + a_i\otimes (a_{-i}+ (-a_i)+(-a_{-i}))+ \\
+ \tri_{U_1}^2(\alpha_{-i}) +  a_{-i}\otimes ((-a_i)+(-a_{-i}))+\tri_{U_1}^2(\alpha_{i}^{-1})+ (-a_{i})\otimes (-a_{-i}) + \tri_{U_1}^2(\alpha_{-i}^{-1})=\\=
a_i\otimes a_{-i}-a_{-i}\otimes a_i, 
\end{multline*}
which implies that $\tri_{U_1}^2(\zeta)=\mu$. The expression for  $\tri_{U_1}^3(\zeta)$ is obtained likewise.

For the first assertion, we apply twice the composition formula modulo $F^2\Hcal_n(U_1)$ to obtain  
$$
\textstyle \tri^n_{U_1}(\zeta)\equiv \sum_{i=\pm 1}^{\pm g} \tri^n_{U_1}(\alpha_i)+\tri^n_{U_1}(\alpha_i^{-1})\equiv \sum_{i=\pm 1}^{\pm g} \tri^n_{U_1}(\alpha_i\alpha_i^{-1})=\sum_{i=\pm 1}^{\pm g} \tri^n_{U_1}(1)= 0,
$$
and so $\tri^n_{U_1}(\zeta) \in F^2\Hcal_n(U_1)$. To show that $\tri^n_{U_1}(\zeta)$ does not lie $F^3\Hcal_n(U_1)$, apply the iterated degeneracy $\partial^2_n:\Hcal_n(U_1)\to \Hcal_2(U_1)$ from Lemma \ref{lem:iterateddegeneracy}: this map annihilates $F^3\Hcal_n(U_1)$ but maps $\tri^n_{U_1}$ to $\pm \tri^2_{U_1}(\zeta)\neq 0$.
\end{proof}

\begin{remark}\label{rem:}
Lemma \ref{lemma:expansion_delta} shows that  $\tri^3_{U_1}(\zeta)$  is not divisible by $\mu$, which implies that $\tri^n_{U_1}(\zeta)$ is also not divisible by $\mu$ for $n\ge 3$.
\end{remark}

If $I$ is a $2$-element set, then $\Kcal_I(U, U_1)$ is a copy of $\Kcal_2(U, U_1)=\ZZ\mu$. Its generator is defined up to sign (this requires that we order $I$) and  we let $\mu_I$ stand for one of these generators.

\subsection{Towards the  kernel of the mapping class group representation} 
We  denote  the image of $\tri^n_U:  \Lambda_{U}|_n\to \Hcal_n(U)$ by $\Lambda^\cfg_{U,n}$:
\begin{equation}\label{eqn:flatquotient}
\Lambda_{U}|_n\twoheadrightarrow  \Lambda^\cfg_{U,n}\hookrightarrow \Hcal_n(U).
\end{equation}
It is of course  obtained  as the quotient of $\Lambda_{U_1}|_n$ by the  preimage of $\Kcal_n(U, U_1)$ in  $\Lambda_{U}|_n$.  For $n>0$ this is also the image of $\Ical_U|_n$ and so we may then write $\Ical^\cfg_{U,n}$ instead.
 Lemma \ref{lemma:truncation} shows that  for  $n>0$
the truncation $\Ical_{U}|_{n+1}\twoheadrightarrow \Ical_{U}|_{n}$ induces a surjection $\Lambda^\cfg_{U,n+1}\twoheadrightarrow \Lambda^\cfg_{U,n}$ that makes the  diagram  below commute
\begin {equation}\label{eqn:truncflat}
    \begin{tikzcd}
    \Ical_{U}|_{n+1}\dar[two heads]\rar[two heads] &\Ical^\cfg_{U,n+1}\dar[dashed,two heads]\rar[hook] & \Hcal_{n+1}(U)\dar[two heads,"trunc "]\\
    \Ical_{U}|_n\rar[two heads] &   \Ical^\cfg_{U,n}\rar[hook]  &\Hcal_n(U)
\end{tikzcd}
\end{equation}
It is clear that $\Ical^\cfg_{U,n+1}\to  \Ical^\cfg_{U,n}$ will factor through a surjection  
$\Ical^\cfg_{U,n+1}|_n \to \Ical^\cfg_{U,n}$, but we do not claim that the latter map is an isomorphism (in fact it is usually not,  as will be shown in
the sequel in preparation by A.S.).

The natural  $\Mod(U)$-equivariant surjection of Theorem \ref{thm:generation} clearly factors through the $\Mod(U)$-equivariant map
\begin{equation}\label{eqn:tensormap3}
\oplus_r\oplus_{(\Nbold_1,\dots , \Nbold_r)} \;  \Lambda^\cfg_{U,n_1}\otimes\Lambda^\cfg_{U,n_2}\otimes\cdots  \xrightarrow{\tri^{\Nbold}} \Hcal_n(U),
\end{equation}
where we are summing over the decompositions of $[n]$ into nonempty parts with a specified total order on each part.
In view of the surjectivity of $\Lambda^\cfg_{U,n+1}\to\Lambda^\cfg_{U,n}$ (by \eqref{eqn:truncflat}), we find:

\begin{corollary}\label{cor:}
The action of $\Mod(U)$ on $\Hcal_n(U)=H^n(\conf_n(U))$ has the same kernel as its action on $\Lambda^\cfg_{U,n}$. \hfill $\square$
\end{corollary}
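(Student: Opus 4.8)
The plan is to establish the two inclusions of kernels separately, exploiting that $\Lambda^\cfg_{U,n}$ and $\Hcal_n(U)$ are tied together by $\Mod(U)$-equivariant maps already built above. Write $K_{\Hcal}$ and $K_{\Lambda}$ for the kernels of the $\Mod(U)$-actions on $\Hcal_n(U)$ and on $\Lambda^\cfg_{U,n}$ respectively.

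For the inclusion $K_{\Hcal}\subseteq K_{\Lambda}$, I would simply invoke \eqref{eqn:flatquotient}: since $\tri^n_U$ is $\Mod(U)$-equivariant, its image $\Lambda^\cfg_{U,n}$ is a $\Mod(U)$-submodule of $\Hcal_n(U)$ and the inclusion $\Lambda^\cfg_{U,n}\hookrightarrow\Hcal_n(U)$ is equivariant. Any mapping class acting trivially on $\Hcal_n(U)$ then acts trivially on this submodule, whence $K_{\Hcal}\subseteq K_{\Lambda}$.

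For the reverse inclusion $K_{\Lambda}\subseteq K_{\Hcal}$, the key step is to propagate triviality downward through the truncations. Suppose $\varphi\in\Mod(U)$ acts trivially on $\Lambda^\cfg_{U,n}$. Composing the equivariant surjections $\Lambda^\cfg_{U,m+1}\twoheadrightarrow\Lambda^\cfg_{U,m}$ of \eqref{eqn:truncflat} yields, for every $m\le n$, an equivariant surjection $\Lambda^\cfg_{U,n}\twoheadrightarrow\Lambda^\cfg_{U,m}$; since triviality on the domain of an equivariant surjection forces triviality on its image, $\varphi$ acts trivially on every $\Lambda^\cfg_{U,m}$ with $m\le n$. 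Now the source of the equivariant surjection \eqref{eqn:tensormap3} is a direct sum of tensor products of factors $\Lambda^\cfg_{U,n_i}$ whose part sizes obey $n_1+\cdots+n_r=n$, so $n_i\le n$ throughout; hence $\varphi$ acts trivially on each such factor, on each tensor product, and thus on the whole source. As \eqref{eqn:tensormap3} surjects $\Mod(U)$-equivariantly onto $\Hcal_n(U)$, triviality on the source forces triviality on the target, giving $\varphi\in K_{\Hcal}$.

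I do not expect a genuine obstacle here: the corollary is a formal consequence of the equivariance and surjectivity assembled in Theorem \ref{thm:generation}, the presentation \eqref{eqn:tensormap3}, and the truncation diagram \eqref{eqn:truncflat}. The only point needing attention is the bookkeeping that all part sizes $n_i$ in \eqref{eqn:tensormap3} are bounded by $n$; this is precisely what allows the single hypothesis of triviality at level $n$ to control every tensor factor simultaneously, and it is what makes the downward propagation through the truncation surjections the operative device in the argument.
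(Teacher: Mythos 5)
Your proposal is correct and follows essentially the same route as the paper: one inclusion comes from $\Lambda^\cfg_{U,n}$ being an equivariant submodule of $\Hcal_n(U)$ via \eqref{eqn:flatquotient}, and the other from propagating triviality down through the truncation surjections of \eqref{eqn:truncflat} and then up through the equivariant surjection \eqref{eqn:tensormap3}. The paper leaves these steps implicit (hence the $\square$), but your bookkeeping of the part sizes $n_i\le n$ is exactly the point being used.
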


We denote  this common kernel by  $J_\cfg^n\Mod(U)\subset \Mod(U)$. 
It follows from the diagram \eqref{eqn:truncflat} that  the $J_\cfg^n\Mod(U)$ form  a descending series of normal subgroups of $\Mod(U)$. It is also clear that $J_\cfg^n\Mod(U)\supset J^n\Mod(U)$. This inclusion could be strict and we shall see in Section \ref{subsect:comparison}, that  this is indeed usually the case. 

\begin{remark}
Here is a description of the projective limit  $\hat\Lambda^\cfg_{U}:=\varprojlim_n\Lambda^\cfg_{U,n}$ without reference to a configuration space. It will  not be used in what follows. The  characterization that we gave in  Subsection \ref{subsect:R} of the monoidal functor $\uHcal(\RR)=\uHcal$ shows  that  there is an algebra homomorphism given by 
 \[
\Scal: \widehat \Hcal_\pt\to \QQ[[t]], \quad \tri^{\Nbold}t^n\mapsto t^n/n!
\]
It  has the property that it  takes $\sum_{n\ge 0} \tri^n(\g_o)t^n=\sum_{n\ge 0} \tri^n t^n$  to the formal power series expansion of $e^t$.
Since the map $\tri_{\RR,t}: \hat\Lambda_\RR\to \widehat\Hcal_\pt$ is an algebra homomorphism as well,  its composite with 
$\Scal$ is the exponential map. 

This  generalizes to the case where $\RR$ is replaced by disjoint union of a finite number of copies of 
$\RR$. For this generalization, we then better regard the  target of $\Scal$ as  the divided power series ring (which here is its image). 
We then find an algebra homomorphism
\begin{equation}\label{eqn:divpowers}
\textstyle \Scal_{U_1}: \widehat \Hcal_\pt(U_1)\to \prod_{n=0}^\infty S_n(U_1)t^n/{n!}
\end{equation}
which takes $\tri^{\Nbold}(\alpha_i)$ to $a_i^{\otimes n}t^n/{n!}$. We do likewise for $b_j$. The map  $\Scal_{U_1}$ is then surjective  and   
its  precomposite  with $\tri_{U_1, t}: \hat\Lambda_{U_1}\to \widehat \Hcal_\pt(U_1)$ 
takes   $\alpha_i$ to $e^{ta_i}$ and $\beta_j$ to $e^{tb_j}$. 
This composite is in fact an embedding, 
because the $e^{ta_1} , \dots, e^{tb_l}$ are formally independent  (it is even an isomorphism when tensored with $\QQ$). 

If we want this to descend to $U$ we must do a pushout. The preceding shows how: let  $S^\cfg_\pt\lla  U\rra$ be the (graded) quotient of the right hand side of \eqref{eqn:divpowers} by the smallest homogeneous 2-sided ideal which contains 
\begin{multline*}
(e^{ta_1}, e^{-ta_{-1}})\cdots (e^{ta_g}, e^{-ta_{-g}})-1=\\
\textstyle =\mu_Xt^2+  \big( \sum_{i=\pm 1}^{\pm g} \sign (i)[\frac{1}{2}a_i^2, a_{-i}] -\sum_{i=1}^g [a_i,a_{-i}](a_i+a_{-i}) \big)t^3+\cdots 
\end{multline*}
and  is $\Scal_n$-invariant in degree $n$. This ensures that there is an induced algebra  homomorphism 
$\widehat \Hcal_\pt(U)\to S^\cfg_\pt\lla  U\rra$. Its precomposite with $\tri_{U, t}: \hat\Lambda_{U}\to \widehat \Hcal_\pt(U_1)$ is the natural map
and factors of course through the image of $\hat\Lambda_{U}$ in $ \widehat \Hcal_\pt(U_1)$. 
This image  is equal to $\hat\Lambda^\cfg_{U}=\varprojlim_n\Lambda^\cfg_{U,n}$ and embeds in $S^\cfg_\pt\lla  U\rra$. 
\end{remark}

\subsection{Many punctures in terms of one}\label{sec:manypunctures}
We conclude this section with a proof of Theorem \ref{thm:manypuncturesalldegrees}. The key observation is that the word $\zeta$ does not involve the $\beta_j$-loops. Then the differential $\partial$ of the complex $\underline{\Cscr}(U)_\pt$, which interacts with the subcomplex $\underline{\Cscr}(U_1)_\pt$ only via the map $\times_{\zeta}$, also does not involve the $\beta_j$-cells, so that the $\beta_j$-part splits off in the form of the decomposition of $\FB$-chain complexes 
\begin{equation}\label{eq:decompositionmanypunctures}
    (\underline{\Cscr}(U)_\pt, \partial)\cong (\underline{\Cscr}(X\ssm \{p_0\})_\pt, \partial)\otimes (\uHcal^{\otimes l}, 0).
\end{equation}
Here the second factor embeds in $(\underline{\Cscr}(U)_\pt, \partial)$ using the maps $$\beta_j:(\PP^1(\RR),\infty)\to (U_1^*, *)$$ for $j=1,\ldots, l$.
Taking homology, tells us that the $\FB$-functor  $N\mapsto H^{cl}_\pt(\conf_N (U))$ is the $\FB$-tensor product of the functors $N\mapsto H^{cl}_\pt( \conf_N (X\ssm \{p_0\}))$ and $\uHcal^{\otimes l}$. Theorem \ref{thm:manypuncturesalldegrees} is then a spelling out of this $\FB$-product and an application of  Poincar\'e--Lefschetz duality.

This isomorphism is  natural to some extent. In order to  exhibit this, we choose a closed disk $D\subset X$ which contains the  union of the paths $\beta_1,\ldots, \beta_l$ such that this union meets 
$\p D$ in $\{p_0\}$ only. Then viewing the subcomplex $(\underline{\Cscr}(X\ssm \{p_0\})_\pt, \partial)$ as $(\underline{\Cscr}(X\ssm D)_\pt, \partial)$ we get an action by $\Mod(X\ssm D)$ on the former complex. Extending by the trivial action on the second factor in the decomposition \eqref{eq:decompositionmanypunctures} and acting naturally on the left, makes the isomorphism on homologies $\Mod(X\ssm D)$-equivariant.

\section{Comparison with the Johnson filtration}\label{subsect:comparison}
We show that in general   the Johnson filtration strictly refines the modified version defined above. For a class of  examples  we take 
 $P$  to be a singleton $\{*\}$ (so that $U^*=X$)  and  assume that the genus $g$ of $X$  is $\ge 2$. 

We simplify notation by putting $S_n:=S_n(X)$, $\pi:=\pi_1(X,*)$ and denote the associated graded Lie algebra by $L_\pt:=\oplus_{n\ge 1}\pi^{(n)}/ \pi^{(n+1)}$. We recall that this Lie algebra is generated by  $H_1(X)$ and has  $\sum_{i=1}^g [a_i, a_{-i}]=0$ as a defining relation. Its universal enveloping algebra  $S_\pt$ is the tensor algebra on $H_1(X)$ modulo the 2-sided ideal generated by $\mu=\sum_{i=1}^g (a_i\otimes a_{-i}-a_{-i}\otimes a_i)$.
We write $\Ical\subset \ZZ\pi$ for the augmentation ideal.

The group $\Mod(X,*)$ acts faithfully on $\pi$ and its image contains the group of inner of automorphisms of  $\pi$.
Since $g\ge 2$,  $\pi$ has trivial center  and so this defines an embedding  $\iota: \pi\to \Mod(X,*)$. It is realized  geometrically by `point pushing' the base point $*$ along a loop. 

Since $J^n\Mod(X,*)$ is the kernel of the action of $\Mod(X,*)$ on $\pi_n=\pi/ \pi^{(n+1)}$, it follows that if $\alpha\in \pi$, then 
$\iota(\alpha)\in J^n\Mod(X,*)$ if and only $(\alpha, \beta)\in \pi^{(n+1)}$ for all $\beta\in \pi$. The latter is implied by (but is in fact equivalent with) 
$\alpha\in \pi^{(n)}$. In particular, we get a homomorphism of $\ZZ$-modules  
\[
\gr^n(\iota):  L_n\to J^n\Mod(X,*)/J^{n+1}\Mod(X,*)
\]

If $\varphi\in \Mod(X,*)$, then  $\varphi\in J^n\Mod(X,*)$ if and only if $\varphi_*-1$ maps $\ZZ\pi$ to $\Ical^n$. It follows that the assignment  $\varphi\mapsto \varphi_*-1$   defines an embedding of (abelian) groups
\[
J^n\Mod(X,*)/J^{n+1}\Mod(X,*)\hookrightarrow \Hom(S_1, S_{1+n}).
\]
This is  the $n$th Johnson homomorphism. If we precompose this map with $\gr^n(\iota)$, we obtain 
\[
L_n\to \Hom(S_1, S_{1+n}), \quad \alpha\mapsto (b\mapsto \alpha b\alpha^{-1}-b)
\]
which is induced by $\alpha\mapsto \alpha-1$. Equivalently, the map is given by
\[
\ad^1_{1+n}: L_n\to \Hom(S_1, S_{1+n}), \quad \ad^1_{1+n}(a)(x)= [a,x]
\]
So if we succeed in finding an $a\in L_n$ with the property that $\ad^1_{1+n}(a)$ is nonzero, but takes its values in the kernel of 
$S_{1+n}\to S^\cfg_{1+n}$, then we have obtained a nonzero element of $J_\cfg^{n-1}\Mod(X,*)\ssm J^{n-1}\Mod(X,*)$.
The map $S_\pt\to  S^\cfg_\pt$ is an algebra homomorphism and hence the  last  condition is certainly fulfilled if we take  
$a$ itself in the kernel of  $L_n\to S^\cfg_n$. For example, if $n\ge 2$, then we may take 
\[
\textstyle a=\sum_{i=1}^g  \Big([a_i[c_1,[c_2,[\cdots , [c_{n-2},a_{-i}]\cdots]\!]\!] -[a_{-i},[c_1,[c_2,[\cdots , [c_{n-2},a_{i}]\cdots\!]\!]\!] \Big),
\]
where $c_1, \dots , c_{n-2}$ are in $S_1$. It then remains to find such $c_\nu$ such that $\ad^1_{1+n}(a)\not=0$. 

In the simplest possible nontrivial case, 
namely when $n=3$, a straighforward computation shows that then $a=0$. We therefore take $n=4$. Let us write out one of its terms:
\begin{multline*}
[a_i[c_1,[c_2,a_{-i}]]]=[a_i, c_1(c_2a_{-i}-a_{-i}c_2)-(c_2a_{-i}-a_{-i}c_2)c_1]=\\
=a_i\big(c_1(c_2a_{-i}-a_{-i}c_2)-(c_2a_{-i}-a_{-i}c_2)c_1\big)-\big(c_1(c_2a_{-i}-a_{-i}c_2)-(c_2a_{-i}-a_{-i}c_2)c_1\big)a_i
\end{multline*}
It is convenient to collect the terms of the right hand side according to the two positions in which the basis elements  $a_j$ appear:
\begin{small}
\begin{center}
\begin{tabular}{|c|c|}
\hline 
position & terms \\
\hline 
(1,2) & $a_ia_{-i}c_2c_1$\\
(1,3) &  $-a_ic_1a_{-i}c_2-a_ic_2a_{-i}c_1$\\
(1,4) &  $a_ic_1c_2a_{-i}-a_{-i}c_2c_1a_i$\\
(2,3) & 0 \\
(2,4) & $c_1a_{-i}c_2a_i+c_2a_{-i}c_1a_i$\\
(3,4) & $-c_1c_2 a_{-i}a_i$\\
\hline
\end{tabular}
\end{center}
\end{small}
In order to get $a$, we must  sum over $i=1, \dots , g$ and subtract from this the same expression with 
$a_i$ and $a_{-i}$ exchanged.  Since $\sum_i a_ia_{-i}-a_{-i}a_{i}=0$,  we only get contributions from
the `nonadjacent positions' and  thus find that 
\[
a= (-\mu_{1,3}+\mu_{1,4}-\mu_{2,4})(c_1c_2+c_2c_1)
\]
(see the defining identity \eqref{eqn:mu-tensor} for $\mu_{i,j}$). This confirms once more that $a$ maps to zero in $S^\cfg_4$. But it also shows that $\ad^1_5(a)\not=0$ for general $c_1, c_2$: if we put 
$c:=c_1c_2+c_2c_1$, then for $x\in S_1$, we have 
\[
\ad^1_5(a)(x)= (-\mu_{1,3}+\delta_{1,4}-\mu_{2,4})(cx)-(-\mu_{2,4}+\delta_{2,5}-\mu_{3,5})(xc)
\]
which is indeed  nonzero for a general triple $(c_1, c_2,x)$.
For a concrete example, take $c_1=a_1$, $c_2=a_{a_{-1}}$ and $x=a_{2}$, then the coefficient of $a_2a_1a_{-2}a_{-1}a_2$ in $\ad^1_5(a)(x)$ is $-2\neq 0$.

\end{document}